\newcommand{\cdummy}{\cdot}
\newcommand{\tmop}[1]{\ensuremath{\operatorname{#1}}}
\newtheorem{definition}{\bf Definition}[section]
\newtheorem{theorem}[definition]{\bf Theorem}
\newtheorem{lemma}[definition]{\bf Lemma}
\newtheorem{proposition}[definition]{\bf Proposition}
\newtheorem{corollary}[definition]{\bf Corollary}
\newtheorem{remark}[definition]{\bf Remark}
\newtheorem{question}[definition]{\bf Question}
\begin{document}
\date{\today}

\title{Topological pressure for conservative $C^1$-diffeomorphisms with no dominated splitting}

\author{Xueming hui}

\maketitle

\begin{abstract} 
    We prove three formulas for computing topological pressure of $C^1$-generic conservative diffeomorphism with no dominated splitting and show the continuity of topological pressure with respect to these diffeomorphisms. We prove for these generic diffeomorphisms that there is no equilibrium states with positive measure theoretic entropy. In particular, for hyperbolic potentials, there is no equilibrium states.
    
    For $C^1$ generic conservative diffeomorphism on compact surfaces with no dominated splitting and $\phi_m(x):=-\frac{1}{m}\log \Vert D_x f^m\Vert, m \in \mathbb{N}$, we show that there exist equilibrium states with zero entropy and there exists a transition point $t_0$ for the one parameter family $\lbrace t \phi_m\rbrace_{t\geq 0}$, such that there is no equilibrium states for $ t \in [0, t_0)$  and there is an equilibrium state for $t \in [t_0,+\infty)$.
    
\end{abstract} 

\noindent \textbf{Keywords.} Topological pressure; measure theoretic entropy; dominated splitting; Lyapunov exponent; Equilibrium states; Phase transition.
	
\section{Introduction}
\

Topological pressure is a natural generalization of topological entropy. The variational principle for topological pressure says that $P(f,\phi) = \sup_{\mu \in \mathbb{P}(f)} \lbrace h_{\mu}(f)+\int \phi \ d \mu \rbrace$. Here $f$ is a continuous map from a compact metric space $X$ to itself, $\phi$ is a continuous function from $X$ to $\mathbb{R}$ and is referred to as a potential function or observable, and $\mathbb{P}(f)$ is the set of invariant Borel probability measures of $f$, $P(f,\phi)$ denotes the topological pressure of $(f,\phi)$. If there exists a $\nu \in \mathbb{P}(f)$ such that $P(f,\phi)=h_{\nu}(f)+\int \phi \ d \nu$, then $\nu$ is called an equilibrium state for $(f,\phi)$. In the special case $\phi\equiv 0$, $P(f,0)=h_{\tmop{top}}(f)$ and an equilibrium state is called a measure of maximal entropy for $f$. 

\

Here we briefly introduce some known results for equilibrium states for smooth dynamical system. First, if $f$ is $C^{\infty}$, there exists an equilibrium state. The idea is that in this situation the entropy map $\mu \mapsto h_{\mu}(f)$ is upper semi-continuous \cite{10.2307/1971492} and therefore by weak*-compactness of the space of $f$-invariant Borel probability measures, there is an equilibrium state. More generally, expansivity will guarantee that the entropy map is upper semi-continuous and thus implies the existence of equilibrium states. In particular, hyperbolic diffeomorphisms are expansive. For an Axiom A diffeomorphism, there are many important results, see \cite{bowen2008equilibrium}. For example, for a H\"{o}lder continuous potential function, there are at most finitely many ergodic equilibrium states, and the problem of the existence of a unique equilibrium state is solved by finding a Markov partition. Then using results of David Ruelle on the Gibbs measure on the subshift of finite type \cite{10.2307/2373810}, the result for Axiom A diffeomorphisms follows. 

\

Partially hyperbolic systems are not expansive in general, but when the center direction is one-dimensional, we still have the existence of equilibrium states. This is because they are \emph{entropy expansive} \cite{10.2307/1995978,cowieson_young_2005}, and entropy expansiveness is enough to guarantee the existence of equilibrium states. When the center direction is more than one dimensional, counter examples are constructed by Buzzi and Fisher \cite{1930-5311_2013_4_527}. There are several examples of equilibrium states of partially hyperbolic systems, one of the first is known as \textquotedblleft partially hyperbolic horseshoes\textquotedblright, the existence of equilibrium states was proved by Leplaideur, Oliveira and Rios \cite{leplaideur_oliveira_rios_2011}; the uniqueness results for potentials constant on the center-stable direction were given by Arbieto and Prudente \cite{1078-0947_2012_1_27}; several examples of phase transitions were given by D\'{\i}az, Gelfert, and Rams \cite{Porcupine,D_az_2014,D_az_2011}. For certain partially hyperbolic horseshoes, the uniqueness of equilibrium states for H\"{o}lder potentials with small variation was proved by Rios and Siqueira \cite{rios_siqueira_2018}, and the statistical properties of these equilibrium states were studied by Ramos and Siqueira \cite{Ramos2017}. A related class of partially hyperbolic skew-products with non-uniformly expanding base and uniformly contracting fiber was studied by Ramos and Viana \cite{Ramos_Viana_2017}. Some results on the uniqueness of equilibrium states for partially hyperbolic DA systems on $\mathbb{T}^3$ were proved Crisostomo and Tahzibi \cite{Crisostomo_2019}. 

\

In this paper, we focus on studying the topological pressure and equilibrium states for $C^1$-generic conservative diffeomorphism with no dominated splitting. Using a symbolic coding to determine equilibrium states does not work in this case, since by a result of Buzzi, Crovisier and Fisher \cite{2016arXiv160601765B}, there is no symbolic extension for these diffeomorphisms. Instead, we will prove that for these diffeomorphisms, there is no equilibrium states with positive measure theoretic entropy for any continuous potential function. This is a generalization of results in \cite{2016arXiv160601765B}, where it is shown that there is no measure with maximal entropy for these diffeomorphism. 

\

Before stating our main results, we define some notations and concepts. Let $M$ be a $d_0$ dimensional compact orientable Riemannian manifold and $\mathrm{Diff}^1_\omega(M)$ be the set of diffeomorphisms preserving $\omega$ where $\omega$ is either a volume form or a symplectic form. Throughout this paper, whenever $p$ is a periodic point we denote the period of $p$ by $T(p)$. Let $\lambda_1(f,p)\leq \lambda_2(f,p)\leq ... \leq \lambda_{d_0}(f,p)$ be the logarithms of the absolute values of the eigenvalues of $D_pf^{T(p)}$,  $\lambda_i^+(f,p):=\max (\lambda_i(f,p), 0)$, $\lambda_i^-(f,p):= \max (-\lambda_i(f,p) , 0)$ and $\Delta (f, p):=\min (\sum_{i=1}^{d_0} \lambda_i^+(f,p), \sum_{i=1}^{d_0} \lambda_i^-(f,p))$.  In particular, for conservative system, $\Delta(f,p)=\sum_{i=1}^{d_0} \lambda_i^+(f,p)= \sum_{i=1}^{d_0} \lambda_i^-(f,p)$.

We say that an invariant compact set $\Lambda$
has a \emph{dominated splitting} if there exists a non-trivial decomposition
$TM|_{\Lambda}=E\oplus F$ of the tangent bundle of $M$ over $\Lambda$ in two invariant continuous subbundles, $C>0$ and $0<\lambda <1 $,  such that for all $x\in \Lambda$, all $n\geq 1$ and all unit vectors $u\in E(x)$ and $v\in F(x)$ we have
$$
\|Df^nu\|\leq C \lambda^n \|Df^n v\| .
$$

\medskip

Firstly, we have an abstract result, Theorem \ref{main}, about the lower bound for topological pressure for a given system $(f,\phi)$, where $f$ is a diffeomorphism on a compact Riemannian manifold satisfying certain properties and $\phi:M\mapsto \mathbb{R}$ is continuous. This will give us three formulas for topological pressure for a generic diffeomorphism $f\in \mathcal{E}_{\omega} (M)$, where $\mathcal{E}_{\omega} (M)$ denotes the interior of the set of all diffeomorphisms in $\mathrm{Diff}^1_\omega(M)$ that do not have a dominated splitting on the entire manifold. This is always nonempty\footnote{One can find a diffeomorphism such that for each $1\leq i< d_0$ there exists a periodic point $p$ with period $m$ such that $D_pf^m$ has $d_0$ simple eigenvalues $\lambda_1,\dots,\lambda_{d_0}$ such that $|\lambda_k|<|\lambda_{k+1}|$ for each $k\neq i$ and such that $\lambda_i,\lambda_{i+1}$ are non-real conjugated complex numbers. Then if $f$ has a dominated splitting on the entire manifold, by continuity of the splitting, we have the dimensions of the finest splitting will be constant which contradicts our construction.}.

\medskip

As in \cite{2016arXiv160601765B}, we can relate topological pressure to Lyapunov exponents of periodic orbit and the Birkhoff average of potential function on the periodic orbit. And then by Theorem \ref{main}, we can relate that to the topological pressure of horseshoe. More specifically, we have the following.
\begin{theorem}\label{PressureFormula}
	There exists a residual subset $\mathcal{G}$ of $\mathcal{E}_{\omega} (M)$ such that for any $f\in \mathcal{G}$ and any continuous function $\phi:M\mapsto \mathbb{R}$ we have
	\[ P (f, \phi) = \sup_{p \in \tmop{Per} (f)} \{\Delta (f, p) + \frac{1}{T(p)}  \sum_{i = 0}^{T (p) - 1} \phi (f^i (p))\} = \sup_{\tmop{Horseshoe}\ K} P (f|_K, \phi |_K). \]
\end{theorem}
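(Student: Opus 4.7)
The plan is to close the cycle of inequalities
\[ P(f,\phi)\;\leq\;\sup_{p\in\tmop{Per}(f)}\!\Bigl\{\Delta(f,p)+\tfrac{1}{T(p)}\sum_{i=0}^{T(p)-1}\phi(f^i(p))\Bigr\}\;\leq\;\sup_{K}P(f|_K,\phi|_K)\;\leq\;P(f,\phi), \]
where $K$ ranges over horseshoes and $f$ lies in a residual subset $\mathcal{G}\subset\mathcal{E}_\omega(M)$ to be chosen. The last inequality is immediate: each horseshoe $K$ is compact and $f$-invariant, and the variational principle combined with the fact that an $f|_K$-invariant measure lifts to an $f$-invariant measure with identical entropy and $\phi$-integral gives $P(f|_K,\phi|_K)\leq P(f,\phi)$.

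For the middle inequality, I would fix a periodic point $p$ and invoke Theorem~\ref{main} to produce horseshoes $K_n$ shadowing $\tmop{orb}(p)$ with
\[ P(f|_{K_n},\phi|_{K_n})\;\geq\;\Delta(f,p)+\tfrac{1}{T(p)}\sum_{i=0}^{T(p)-1}\phi(f^i(p))-\epsilon_n,\qquad \epsilon_n\downarrow 0. \]
The Lyapunov part of the bound is precisely what Theorem~\ref{main} supplies; the Birkhoff part is controlled because every orbit of $K_n$ remains in a shrinking neighbourhood of $\tmop{orb}(p)$, so uniform continuity of $\phi$ forces its Birkhoff average to be close to $\tfrac{1}{T(p)}\sum_{i=0}^{T(p)-1}\phi(f^i(p))$. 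A standard separability argument in $C^0(M,\mathbb R)$ should let me pass from a countable dense family of potentials to all continuous $\phi$ on a single residual set.

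The first inequality is the hard direction. By the variational principle and the ergodic decomposition, it is enough to fix an ergodic $f$-invariant $\mu$ and prove
\[ h_\mu(f)+\int\phi\,d\mu\;\leq\;\sup_{p\in\tmop{Per}(f)}\!\Bigl\{\Delta(f,p)+\tfrac{1}{T(p)}\sum_{i=0}^{T(p)-1}\phi(f^i(p))\Bigr\}. \]
Ruelle's inequality applied to both $f$ and $f^{-1}$ gives $h_\mu(f)\leq \min(\chi^+(\mu),\chi^-(\mu))$, and conservativity collapses this minimum to $\chi^+(\mu)$, matching the definition of $\Delta$. The plan is then to approximate $\mu$ by periodic orbits $p_n$ such that $\delta_{\tmop{orb}(p_n)}\to\mu$ in the weak-$*$ topology and $\Delta(f,p_n)\to\chi^+(\mu)$: the first convergence yields $\tfrac{1}{T(p_n)}\sum\phi(f^i(p_n))\to\int\phi\,d\mu$ by continuity of $\phi$, while the second matches the Lyapunov side, so the right-hand side exceeds $\chi^+(\mu)+\int\phi\,d\mu\geq h_\mu(f)+\int\phi\,d\mu$.

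The main obstacle will be obtaining this Lyapunov-controlled periodic approximation in the $C^1$-generic conservative setting, since Pesin theory is unavailable at $C^1$ regularity. The tool I intend to invoke is Mañé's ergodic closing lemma upgraded to track Oseledets data, in the spirit of Bochi--Mañé and Avila--Bochi--Crovisier: the hypothesis $f\in\mathcal{E}_\omega(M)$ prevents a dominated splitting over $\tmop{supp}(\mu)$, and this absence of domination is precisely what lets conservative Franks-type perturbations push the exponents of the shadowing periodic orbit to any prescribed target. The final $\mathcal{G}$ is then the intersection of the residual set from Theorem~\ref{main}, the one providing density of periodic orbits in the space of ergodic measures, and the one furnishing the Lyapunov-controlled closing lemma; this closes the cycle.
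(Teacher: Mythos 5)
Your outer inequalities are fine and coincide with the paper's route: $P(f|_K,\phi|_K)\le P(f,\phi)$ is immediate, and the bound $P(f,\phi)\le\sup_{p}\{\Delta(f,p)+\frac{1}{T(p)}\sum_i\phi(f^i(p))\}$ follows, exactly as in the paper, from Ruelle's inequality applied to $f$ and $f^{-1}$, the variational principle, and the generic ergodic closing lemma with control of Lyapunov exponents (Corollary \ref{c.ergodic-closing}). Note that this closing lemma needs no absence of domination over $\tmop{supp}(\mu)$ and does not ``push exponents to any prescribed target''; it only approximates the exponents of $\mu$, which is all that is required, so that part of your justification is off target though harmless. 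The final separability argument in $C^0(M,\mathbb{R})$, passing from a countable dense family of potentials to all $\phi$ on one residual set, also matches the paper.

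The genuine gap is your middle inequality. Theorem \ref{main} does not produce horseshoes for $f$ itself: it produces a horseshoe for an $(\epsilon,V,\mathcal{O}(p))$-perturbation $g$ of $f$, and it applies only to periodic points that are $T,N$-weak with period at least $T$ --- an arbitrary periodic point of $f$ need satisfy neither hypothesis. So you cannot ``fix a periodic point $p$ and invoke Theorem \ref{main} to produce horseshoes $K_n$'' with $P(f|_{K_n},\phi|_{K_n})\ge\Delta_\phi(f,p)-\epsilon_n$, and there is no ``residual set from Theorem \ref{main}'' to intersect, since it is a perturbation statement, not a genericity statement. The paper closes exactly this gap by a Baire-category argument (Lemmas \ref{ContinuitPoints} and \ref{pressure}): on a dense subset of $\mathcal{E}_\omega(M)$ one first replaces a nearly optimal periodic orbit by one that is $\epsilon$-dense in $M$, using that generically some homoclinic class equals $M$ and that periodic orbits can be spread inside horseshoes with controlled exponents (Proposition \ref{p.control-exponents}); $\epsilon$-density together with the absence of any dominated splitting on $M$ makes this orbit $T,N$-weak with large period, so Theorem \ref{main} applies and yields a perturbation $g$ carrying a horseshoe of large pressure; structural stability makes ``having a horseshoe with $P(h|_K,\phi|_K)>c$'' an open property, and the threshold $c$ is transferred back to $f$ because generic $f$ are continuity points of $h\mapsto\Delta_\phi(h)$; intersecting the resulting open dense sets over $\epsilon=1/n$ gives the residual set on which $\Delta_\phi(f)\le\sup_{K}P(f|_K,\phi|_K)$. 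Without some mechanism of this kind your middle inequality remains unproven, and it is the crux of the theorem.
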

By the structural stability of horseshoes, we have the lower semi-continuity of $P(f,\phi)$ with respect to $f$. The upper semi-continuity can also be proved on a residual subset, therefore

\begin{theorem}\label{continuity}
There is a residual subset $\mathcal{G}$ in $\mathcal{E}_{\omega} (M)$ such that for any fixed $\phi\in C^0(M,\mathbb{R})$, each $f\in \mathcal{G}$ is a continuity point of the map $f: \mapsto P (f, \phi) $.
\end{theorem}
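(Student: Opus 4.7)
The plan is to deduce Theorem \ref{continuity} from Theorem \ref{PressureFormula} by combining the structural stability of horseshoes with the classical fact that a semi-continuous function on a Baire space has a residual set of continuity points, and then to obtain uniformity in the potential via separability of $C^0(M,\mathbb{R})$.

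For each fixed continuous $\phi$, I would introduce the auxiliary function $\tilde P(g,\phi):=\sup\{P(g|_K,\phi|_K): K \text{ horseshoe of } g\}$ on $\mathcal{E}_\omega(M)$. Any horseshoe $K$ of $f$ admits a $C^1$-continuation $K_g$ for $g$ in a neighborhood of $f$, and because horseshoes are uniformly hyperbolic the map $g\mapsto P(g|_{K_g},\phi|_{K_g})$ is continuous at $f$ in both the dynamics and the potential. Thus $\tilde P(\cdot,\phi)$ is a supremum of locally continuous functions, hence lower semi-continuous on $\mathcal{E}_\omega(M)$. The variational principle yields $P(\cdot,\phi)\geq \tilde P(\cdot,\phi)$ everywhere, with equality on the residual subset $\mathcal{G}_0$ produced by Theorem \ref{PressureFormula}.

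The next step applies the classical Baire-category result: the set $\mathcal{G}_1^\phi$ of continuity points of the lower semi-continuous function $\tilde P(\cdot,\phi)$ on the Baire space $\mathcal{E}_\omega(M)$ (which is open in $\mathrm{Diff}^1_\omega(M)$) is residual. Setting $\mathcal{G}^\phi:=\mathcal{G}_0\cap \mathcal{G}_1^\phi$, on $\mathcal{G}^\phi$ we have $P=\tilde P$ with $\tilde P$ continuous. Combined with $P\geq \tilde P$ everywhere and density of $\mathcal{G}_0$, this forces $P(\cdot,\phi)$ to be continuous at each $f\in \mathcal{G}^\phi$, provided we can bound $\limsup_{g\to f}P(g,\phi)\leq P(f,\phi)$. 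This upper bound is the ``upper semi-continuity of $P$'' invoked in the remark preceding the theorem, and it constitutes the main obstacle: it amounts to showing that horseshoes of perturbations $g$ must essentially come from horseshoes of $f$, so that no significant additional pressure can spring into existence under a small perturbation.

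Finally, to obtain a single residual $\mathcal{G}$ uniform in $\phi$, I would fix a countable dense sequence $\{\phi_n\}$ in $C^0(M,\mathbb{R})$ and take $\mathcal{G}:=\bigcap_n \mathcal{G}^{\phi_n}$, still residual. The uniform estimate $|P(f,\phi)-P(f,\psi)|\leq \|\phi-\psi\|_\infty$, valid for every diffeomorphism, together with a standard three-epsilon argument, then propagates continuity at each $f\in \mathcal{G}$ from the countable dense family to every continuous potential.
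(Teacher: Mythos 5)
Your lower semicontinuity argument and your final separability/three-epsilon step are sound and parallel the paper: at a generic $f$ where $P(f,\phi)=\sup_K P(f|_K,\phi|_K)$, the structural stability of horseshoes and the trivial bound $P(g,\phi)\geq P(g|_{K_g},\phi|_{K_g})$ give $\liminf_{g\to f}P(g,\phi)\geq P(f,\phi)$, and the uniform estimate $|P(g,\phi)-P(g,\psi)|\leq\|\phi-\psi\|_\infty$ lets you pass from a countable dense family of potentials to all of $C^0(M,\mathbb{R})$.

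The genuine gap is the upper bound $\limsup_{g\to f}P(g,\phi)\leq P(f,\phi)$, which you explicitly flag as ``the main obstacle'' but never prove; moreover the route you sketch for it cannot work. Knowing $P=\tilde P$ on a residual set and $\tilde P(\cdot,\phi)$ continuous at $f$ says nothing about $P(g,\phi)$ for a nearby \emph{non-generic} $g$: there the pressure need not be realized (even approximately) on horseshoes of $g$, so controlling how horseshoes of $g$ arise from horseshoes of $f$ does not bound $P(g,\phi)$, and $P-\tilde P$ can jump up off the residual set. The paper closes this using the \emph{other} formula in Theorem \ref{PressureFormula}, namely $P(f,\phi)=\Delta_\phi(f)$, together with three ingredients absent from your proposal: (i) Lemma \ref{ContinuitPoints}, a Franks'-lemma argument (not the generic continuity of a semicontinuous function) showing that generic $f$ is a continuity point of $g\mapsto\Delta_\phi(g)$; (ii) Ruelle's inequality, giving $h_\mu(\widetilde f)+\int\phi\,d\mu\leq\Delta(\widetilde f,\mu)+\int\phi\,d\mu$ for any ergodic $\mu$ of a nearby $\widetilde f$ almost realizing $P(\widetilde f,\phi)$; and (iii) the ergodic closing lemma (Theorem \ref{t.ergodic-closing}), which converts $\mu$ into a hyperbolic periodic orbit of a further perturbation $\widehat f$, so that $P(\widetilde f,\phi)\leq\Delta_\phi(\widehat f)+\alpha\leq\Delta_\phi(f)+2\alpha=P(f,\phi)+2\alpha$ by the continuity in (i). Without supplying this (or an equivalent) argument, your proof establishes only lower semicontinuity of $P(\cdot,\phi)$ at generic $f$, not continuity.
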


For any $C^1$-diffeomorphism $f$ of a compact, $d_0$ dimensional, Riemannian manifold $M$ and for each integer $1\leq k\leq d_0$, we define the following quantity, 
\[\sigma_k(f,\phi):=\lim_{n \rightarrow \infty} \sup_{E\in \tmop{Grass}_k(TM)}\frac{1}{n}\left( \log \vert \tmop{Jac}(f^n,E)\vert + S_n \phi(\pi_k(E))\right). \]
Here $\pi_k(E)$ is the base-point of $E$ and $S_n(\phi)$ is the Birkhoff sum of $\phi$.

Then we have the following formula for topological pressure of generic diffeomorphism in $\mathcal{E}_{\omega} (M)$.
\begin{theorem}\label{Pressureformula}
	There exists a residual subset $\mathcal{G}$ of $\mathcal{E}_{\omega} (M)$ such that for any $f\in \mathcal{G}$ and any $\phi\in C^0(M,\mathbb{R})$ we have
	\[ P (f, \phi)=\max_{1\leq k\leq d_0}\sigma_k(f,\phi). \]
\end{theorem}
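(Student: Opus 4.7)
The plan is to prove the identity by establishing the two inequalities $P(f,\phi) \leq \max_k \sigma_k(f,\phi)$ and $\max_k \sigma_k(f,\phi) \leq P(f,\phi)$ separately. The upper bound is a measure-theoretic consequence of Ruelle-Margulis and Oseledets that needs no genericity, while the lower bound invokes Theorem \ref{PressureFormula} together with a $C^1$-generic approximation of Lyapunov exponents of invariant measures by periodic orbits in the conservative setting. For the upper bound, first observe that the sequence
\[A_n := \sup_{E\in \tmop{Grass}_k(TM)}\bigl\{\log\vert\tmop{Jac}(f^n,E)\vert+S_n\phi(\pi_k(E))\bigr\}\]
is subadditive in $n$ (chain rule for Jacobians plus $S_{n+m}\phi(x)=S_n\phi(x)+S_m\phi(f^n x)$), so the limit defining $\sigma_k$ exists. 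For any $f$-invariant ergodic measure $\mu$, Oseledets' theorem provides an $f$-invariant measurable $k$-plane distribution $E^\mu_k$ with
\[
\lim_{n\to\infty}\frac{1}{n}\log\bigl\vert\tmop{Jac}(f^n,E^\mu_k(x))\bigr\vert = \sum_{i=d_0-k+1}^{d_0}\lambda_i(\mu)
\]
for $\mu$-a.e.\ $x$; combining this with Birkhoff for $\phi$ yields $\sigma_k(f,\phi)\geq \sum_{i=d_0-k+1}^{d_0}\lambda_i(\mu)+\int\phi\,d\mu$ for every $k$. Choosing $k$ equal to the number of nonnegative Lyapunov exponents of $\mu$ and using Ruelle-Margulis $h_\mu(f)\leq \sum_{i:\lambda_i(\mu)>0}\lambda_i(\mu)$ gives $h_\mu(f)+\int\phi\,d\mu \leq \max_k\sigma_k(f,\phi)$; the variational principle then gives $P(f,\phi)\leq \max_k\sigma_k(f,\phi)$.

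For the lower bound, fix $k$ and $\varepsilon>0$. Select $n$ large and $E_n\in \tmop{Grass}_k(TM)$ with basepoint $x_n=\pi_k(E_n)$ such that $\tfrac{1}{n}(\log\vert\tmop{Jac}(f^n,E_n)\vert+S_n\phi(x_n))\geq \sigma_k(f,\phi)-\varepsilon$. Lift the orbit $\{(f^i x_n, Df^i E_n)\}_{i=0}^{n-1}$ to empirical measures $\hat\nu_n$ on $\tmop{Grass}_k(TM)$ and extract a weak-$*$ limit $\hat\mu$, which is invariant under the cocycle $(x,E)\mapsto (f(x),Df(E))$; denote by $\mu$ its projection to $M$. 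Since $\psi(x,E):=\log\vert\tmop{Jac}(Df,E)\vert$ is continuous on $\tmop{Grass}_k(TM)$, the identity $\tfrac{1}{n}\log\vert\tmop{Jac}(f^n,E_n)\vert = \int\psi\,d\hat\nu_n$ passes to the weak-$*$ limit to give
\[\sigma_k(f,\phi)-\varepsilon \leq \int\psi\,d\hat\mu + \int\phi\,d\mu.\]
By Kingman's subadditive ergodic theorem on $\tmop{Grass}_k(TM)$ combined with Oseledets theory, $\int\psi\,d\hat\mu \leq \int L_k(\mu,x)\,d\mu(x)$, where $L_k(\mu,x)$ is the sum of the top $k$ Lyapunov exponents of $\mu$ at $x$; this is in turn dominated by $\Delta(f,\mu):=\sum_{i:\lambda_i(\mu)>0}\lambda_i(\mu)$ in the conservative case. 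Picking an ergodic component $\mu_0$ of $\mu$ that maximizes $\Delta(f,\cdot)+\int\phi\,d(\cdot)$ and applying the $C^1$-generic approximation of Lyapunov exponents by periodic orbits used to prove Theorem \ref{PressureFormula}, one obtains a periodic orbit $p$ with $\Delta(f,p)+\tfrac{1}{T(p)}S_{T(p)}\phi(p)\geq \sigma_k(f,\phi)-2\varepsilon$. By Theorem \ref{PressureFormula}, $P(f,\phi)\geq \sigma_k(f,\phi)$, and maximizing over $k$ completes the proof.

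The main obstacle is the lower bound: the weak-$*$ limit $\hat\mu$ on the Grassmannian need not concentrate on the top-$k$ Oseledets distribution of $\mu$, so the inequality $\int\psi\,d\hat\mu\leq \int L_k(\mu,x)\,d\mu(x)$ requires care (it is a sharper version of the Ledrappier-Young type upper bound along a non-invariant $k$-plane), and the $C^1$-generic periodic approximation of $\mu_0$ must simultaneously control both the positive Lyapunov exponents of $p$ and the Birkhoff average of $\phi$ along the orbit of $p$. This joint approximation is the engine already supplied by Theorem \ref{PressureFormula}, so the substantive new work is the Grassmannian lift and the identification of $\int\psi\,d\hat\mu$ with a sum of Lyapunov exponents of the limiting invariant measure $\mu$.
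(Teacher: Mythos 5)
Your argument is correct and follows essentially the same route as the paper: the two inequalities you prove amount to the paper's Lemma \ref{lemma1} (the identification $\sigma_k(f,\phi)=\sup_{\mu\in\mathbb{P}_{\tmop{erg}}(f)}\sigma_k(f,\phi,\mu)$ via Oseledets and invariant measures of the induced Grassmannian map, which the paper cites from \cite{2016arXiv160601765B}) together with Lemma \ref{lemma2} (generic approximation of ergodic measures by periodic orbits via the ergodic closing lemma) and the identity $P(f,\phi)=\Delta_\phi(f)$ from Theorem \ref{PressureFormula}. The only difference is that you reconstruct inline, via empirical measures on $\tmop{Grass}_k(TM)$, the standard step the paper delegates to citations; the inequality $\int\psi\,d\hat\mu\le\int L_k(\mu,x)\,d\mu(x)$ you worry about is exactly that standard step (any $k$-plane at a regular point has volume growth at most the sum of the top $k$ exponents), and your "maximizing" ergodic component should just be an almost-maximizing one, which suffices.
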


In \cite{2016arXiv160601765B}, Buzzi, Crovisier and Fisher prove that for a generic diffeomorphism in $\mathcal{E}_{\omega} (M)$, there is no measure with maximal entropy. Similarly, we prove that for these diffeomorphism and any continuous potential function, there is no equilibrium states with positive measure theoretic entropy. 
\begin{theorem}\label{No-ES-positive-entropy}
	There exists a residual subset $\mathcal{G}$ of $\mathcal{E}_{\omega} (M)$ such that for every $f\in \mathcal{G}$ and $\phi \in C^0(M,\mathbb{R})$ we have, 
	any equilibrium state of $(f,\phi)$ has zero measure theoretic entropy.
\end{theorem}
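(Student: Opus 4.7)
The plan is to argue by contradiction, using the pressure formula of Theorem~\ref{PressureFormula} together with Ruelle's inequality and a $C^1$ perturbation. Suppose that for some $f$ in the residual subset of $\mathcal{E}_{\omega}(M)$ where Theorems~\ref{PressureFormula} and \ref{continuity} both apply, and for some $\phi \in C^0(M,\mathbb{R})$, there exists an equilibrium state $\mu$ with $h_\mu(f) > 0$. By the affinity of $\nu \mapsto h_\nu(f) + \int \phi\, d\nu$ and the ergodic decomposition, we may take $\mu$ ergodic; let $\lambda_1(\mu)\leq\cdots\leq \lambda_{d_0}(\mu)$ denote its Lyapunov exponents and set $\Delta(f,\mu):=\sum_i \lambda_i^+(\mu)$, in parallel with the periodic notation. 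Since $f$ preserves $\omega$ and $h_\mu(f) > 0$, Ruelle's inequality together with $\sum_i \lambda_i(\mu) = 0$ forces $\mu$ to be hyperbolic with $\Delta(f,\mu) \geq h_\mu(f) > 0$.

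The core of the proof is a pincer estimate. From the equilibrium condition and Ruelle,
\[
P(f,\phi) \;=\; h_\mu(f) + \int \phi\, d\mu \;\leq\; \Delta(f,\mu) + \int \phi\, d\mu.
\]
In the opposite direction I invoke a Katok-type horseshoe approximation of the hyperbolic ergodic measure $\mu$; in the $C^1$-conservative setting such approximations are available through the work of Avila--Crovisier and Crovisier--Yang. This yields horseshoes $K_n$ whose invariant measures converge weakly to $\mu$ and whose periodic points $p_n \in K_n$ satisfy $\Delta(f,p_n) \to \Delta(f,\mu)$ and $\tfrac{1}{T(p_n)} S_{T(p_n)} \phi(p_n) \to \int \phi\, d\mu$. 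Theorem~\ref{PressureFormula} then gives
\[
P(f,\phi) \;=\; \sup_{p\in\tmop{Per}(f)} \Bigl\{\Delta(f,p) + \tfrac{1}{T(p)} S_{T(p)} \phi(p)\Bigr\} \;\geq\; \Delta(f,\mu) + \int \phi\, d\mu,
\]
so the two bounds collapse to equalities; in particular Ruelle's inequality is saturated: $h_\mu(f) = \Delta(f,\mu)$.

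The final and main obstacle is to rule out this Ruelle equality for $f$ generic in $\mathcal{E}_\omega(M)$. I would invoke the $C^1$ perturbation theory of Bochi--Ma\~n\'e--Avila--Crovisier, which exploits the absence of dominated splitting to produce a diffeomorphism $g$ arbitrarily $C^1$-close to $f$ inside $\mathcal{E}_\omega(M)$ together with a periodic orbit $q_g$ of $g$ such that $\Delta(g,q_g)$ strictly exceeds $\Delta(f,\mu)$ by a definite amount $\eta>0$, while $\tfrac{1}{T(q_g)} S_{T(q_g)} \phi(q_g)$ remains within any prescribed tolerance of $\int \phi\, d\mu$. Applying Theorem~\ref{PressureFormula} to $g$ would then give $P(g,\phi) \geq P(f,\phi) + \eta/2$, contradicting the upper semi-continuity of the pressure at $f$ afforded by Theorem~\ref{continuity}. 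The delicate technical point is to orchestrate the perturbation so that it simultaneously enhances $\Delta$ by a uniform amount and controls the Birkhoff average of $\phi$ along the newly created periodic orbit; this requires combining $C^1$ connecting-lemma technology with the Bochi rotation mechanism in a form adapted to the conservative, no-dominated-splitting setting, and is the step where most of the work lies.
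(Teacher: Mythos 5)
Your first two steps are essentially fine, and in fact easier than you make them: for generic $f$ you do not need a Katok-type horseshoe approximation (which is delicate below $C^{1+\alpha}$ regularity); the generic ergodic closing lemma (Corollary \ref{c.ergodic-closing}) together with Theorem \ref{PressureFormula} already gives $P(f,\phi)\geq \Delta(f,\mu)+\int\phi\,d\mu$ for every ergodic $\mu$, so your pincer and the saturation $h_{\mu}(f)=\Delta(f,\mu)$ do follow. The reduction to ergodic equilibrium states via the ergodic decomposition is also standard. But none of this is where the content of the theorem lies.

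The genuine gap is your final step. You need, for arbitrarily small $C^1$ perturbations $g$ of $f$, a periodic orbit $q_g$ with $\Delta(g,q_g)+\frac{1}{T(q_g)}S_{T(q_g)}\phi(q_g)\geq P(f,\phi)+\eta$ where $\eta>0$ is \emph{fixed independently of the size of the perturbation}. No Franks-lemma/Bochi--Ma\~n\'e-type mechanism produces a gain of definite size: all such constructions, including Lemma \ref{PerturbationLemma} of this paper, yield an increase proportional to the $C^1$ size of the perturbation (there, a gain of $\eta/(10C_0^2(\Vert Dg\Vert+\Vert Dg^{-1}\Vert))$ for an $\eta$-perturbation), and such a gain is perfectly compatible with continuity of $f\mapsto P(f,\phi)$ at $f$, so it produces no contradiction. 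Worse, the hypothesis you are trying to contradict plays no role in your perturbation step: since $\mu$ attains the supremum, $\Delta(f,\mu)+\int\phi\,d\mu=P(f,\phi)$, so what you are postulating is exactly that at a generic (continuity) point the pressure can be raised by a definite amount by arbitrarily small perturbations --- i.e.\ the negation of Theorem \ref{continuity} itself. That statement is therefore unconditionally false at the points you consider, and nothing about $h_{\mu}(f)>0$ or Ruelle saturation can make it true; the argument is circular rather than merely incomplete. The paper proves the theorem by a different mechanism: a perturbation lemma whose gain is proportional to the perturbation size but which also gives a uniform upper bound on the growth of $k$-dimensional volumes plus Birkhoff sums away from a distinguished periodic orbit (Lemma \ref{PerturbationLemma}); this forces every ergodic measure with pressure within $\delta$ of $P(f,\phi)$ to concentrate, in the Bowen-ball sense, near that single periodic orbit (Propositions \ref{non-generic-concentration} and \ref{concentration}); Katok's entropy formula and a counting argument then show any such measure has entropy bounded by a quantity that can be made arbitrarily small, so an equilibrium state must have zero entropy. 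To repair your proposal you would need to replace the ``definite gain'' perturbation by an argument of this concentration type.
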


We know that a K-automorphism must have positive measure theoretic entropy \cite{MR0217258}, therefore by Theorem \ref{No-ES-positive-entropy}, we have:
\begin{corollary}
	There exists a residual subset $\mathcal{G}$ of $\mathcal{E}_{\omega} (M)$ such that for every $f\in \mathcal{G}$ and $\phi \in C^0(M,\mathbb{R})$ we have, if $\mu$ is an equilibrium state, then $(f,\mu)$ is not a K-automorphism.
\end{corollary}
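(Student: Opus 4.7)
The plan is simply to combine Theorem \ref{No-ES-positive-entropy} with the cited property of K-automorphisms; the corollary is logically immediate once both ingredients are in hand. No new dynamical input is required beyond what has already been assembled.

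First I would let $\mathcal{G}\subset \mathcal{E}_{\omega}(M)$ be the residual subset produced by Theorem \ref{No-ES-positive-entropy}, and fix an arbitrary $f\in\mathcal{G}$, an arbitrary $\phi\in C^0(M,\mathbb{R})$, and an arbitrary equilibrium state $\mu$ for $(f,\phi)$. By Theorem \ref{No-ES-positive-entropy}, $h_{\mu}(f)=0$.

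Next I would invoke the classical theorem of Rokhlin (cited here as \cite{MR0217258}) stating that any K-automorphism has strictly positive Kolmogorov--Sinai entropy. Taking the contrapositive, the equality $h_{\mu}(f)=0$ rules out $(f,\mu)$ being a K-automorphism. Since $f$, $\phi$, and $\mu$ were arbitrary subject to the hypotheses, the same residual set $\mathcal{G}$ works uniformly, which is what the corollary asserts.

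There is no genuine obstacle in the argument; the two-step structure (vanishing entropy from Theorem \ref{No-ES-positive-entropy}, then the K-property implies positive entropy) is the entire content. The only subtlety worth mentioning in writing is that the quantification over $\phi$ and $\mu$ does not require a second intersection of residual sets, because the residual set in Theorem \ref{No-ES-positive-entropy} is already chosen so as to handle every continuous potential simultaneously.
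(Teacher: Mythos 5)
Your argument is exactly the paper's: the corollary is obtained by combining Theorem \ref{No-ES-positive-entropy} (any equilibrium state for a generic $f\in\mathcal{E}_{\omega}(M)$ has zero entropy) with the cited fact that a K-automorphism must have positive measure-theoretic entropy, using the same residual set, which already works for all continuous potentials simultaneously. Your write-up is correct and adds no gaps.
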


Note that it is always true that 
\[ \sup_{\mu \in \mathbb{P}_{\tmop{erg}}(f)} \int \phi \ d\mu \leq P(f,\phi). \]

Following \cite{Inoquio-Renteria2012}, the potential $\phi$ will be called a \emph{hyperbolic potential} for $f$ if we have strict inequality in the above inequality.
By $ \sup_{\mu \in \mathbb{P}_{\tmop{erg}}(f)} \int \phi \ d\mu \leq \sup \phi$ and $\inf \phi +h_{\tmop{top}}(f) \leq P(f,\phi)$, the following corollary is immediate. 

\begin{corollary}
	There exists a residual subset $\mathcal{G}$ of $\mathcal{E}_{\omega} (M)$ such that for every $f\in \mathcal{G}$ and hyperbolic potential $\phi$ , there is no equilibrium states.
In particular, if  $\sup \phi -\inf \phi < h_{\tmop{top}}(f)$, then $\phi$ is a hyperbolic potential function. 
\end{corollary}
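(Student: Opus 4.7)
I would take $\mathcal{G}$ to be the same residual subset furnished by Theorem \ref{No-ES-positive-entropy}, and prove the two assertions in the order they are stated.

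First, for a hyperbolic potential $\phi$, suppose for contradiction that $\mu$ is an equilibrium state for $(f,\phi)$, i.e.
\[ h_\mu(f) + \int \phi \, d\mu = P(f,\phi). \]
Theorem \ref{No-ES-positive-entropy} forces $h_\mu(f)=0$, so $\int \phi \, d\mu = P(f,\phi)$. I would then feed $\mu$ into its ergodic decomposition: there is a probability measure $\tau$ on $\mathbb{P}_{\tmop{erg}}(f)$ with $\mu = \int \nu \, d\tau(\nu)$, and consequently
\[ P(f,\phi) = \int \phi \, d\mu = \int \!\!\int \phi \, d\nu \, d\tau(\nu) \leq \sup_{\nu \in \mathbb{P}_{\tmop{erg}}(f)} \int \phi \, d\nu. \]
But hyperbolicity of $\phi$ gives the strict inequality $\sup_{\nu \in \mathbb{P}_{\tmop{erg}}(f)} \int \phi \, d\nu < P(f,\phi)$, a contradiction. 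This rules out the existence of any equilibrium state.

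Second, for the sufficient condition, I would combine the two elementary bounds noted just before the corollary: $\sup_{\nu \in \mathbb{P}_{\tmop{erg}}(f)} \int \phi \, d\nu \leq \sup \phi$, and $P(f,\phi) \geq h_{\tmop{top}}(f) + \inf \phi$ (the latter coming from the variational principle applied to measures of near-maximal entropy, which exist even though a measure of maximal entropy does not, for $f \in \mathcal{G}$). Under the hypothesis $\sup\phi - \inf\phi < h_{\tmop{top}}(f)$, chaining these two bounds yields
\[ \sup_{\nu \in \mathbb{P}_{\tmop{erg}}(f)} \int \phi \, d\nu \leq \sup \phi < h_{\tmop{top}}(f) + \inf \phi \leq P(f,\phi), \]
so $\phi$ is hyperbolic and the first part applies.

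There is no genuinely hard step here: the corollary is essentially a packaging of Theorem \ref{No-ES-positive-entropy} with the definition of a hyperbolic potential. The only point requiring a small amount of care is the ergodic decomposition step, where one must observe that $\int \phi \, d\mu$ is the $\tau$-average of $\int \phi \, d\nu$ over ergodic $\nu$, and hence is bounded above by the supremum of the integrand; and the use of the bound $P(f,\phi) \geq h_{\tmop{top}}(f) + \inf \phi$, which must be argued via a supremum over measures rather than by plugging in a measure of maximal entropy, since no such measure exists for $f \in \mathcal{G}$.
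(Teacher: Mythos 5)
Your proposal is correct and follows essentially the same route the paper intends: the paper treats the corollary as immediate from Theorem \ref{No-ES-positive-entropy} (which forces any equilibrium state to have zero entropy, hence $\int\phi\,d\mu = P(f,\phi)$, contradicting hyperbolicity of $\phi$) together with the two elementary bounds $\sup_{\nu\in\mathbb{P}_{\tmop{erg}}(f)}\int\phi\,d\nu\leq\sup\phi$ and $\inf\phi+h_{\tmop{top}}(f)\leq P(f,\phi)$, which is exactly what you spell out, with the ergodic decomposition step correctly supplying the detail the paper leaves implicit. Your reading of the statement (nonexistence for hyperbolic potentials, plus the variation bound as a sufficient condition for hyperbolicity) matches the paper's intended meaning.
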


\

Based on the above general results for a compact Riemannian manifold of arbitrary dimension, we have some results for conservative non-Anosov diffeomorphisms of compact surfaces. \footnote{For an area-preserving diffeomorphism, if it has a dominated splitting on a compact invariant set, then the splitting is hyperbolic. So $\mathcal{E}_{\omega} (M)$ is the interior of the set of all non-Anosov diffeomorphisms on $M$.}
We first recall some results for conservative diffeomorphisms on compact surfaces.

Let $p$ be a periodic point of period $m$ for a conservative diffeomorphism $f$ on the compact surface $M$. We will call $p$ an \emph{elliptic periodic point} if the linear operator $D_pf^m: T_pM \mapsto T_pM$ has only eigenvalues of norm $1$ but not equal to $1$. In the conservative setting, this means that it has two conjugate complex eigenvalues on the unit circle. By a result of S. Newhouse in \cite{10.2307/2374000}, we know that for a generic diffeomorphism in $\mathcal{E}_{\omega} (M)$, the set of all elliptic periodic orbits are dense in $M$ and each elliptic orbit can be approximated by hyperbolic periodic orbits. This shows that there are many points with only zero Lyapunov exponents. A result by J. Bochi \cite{bochi_2002} shows that for $C^1$ generic conservative diffeomorphismon on surfaces, either the diffeomorphism is Anosov or the set of points with only zero Lyapunov exponents has full Lebesgue measure.This result was announced by R. $\tmop{Ma \tilde{n}  \acute{e}}$ around 1980 without a published proof. Here we address the question of whether there is an ergodic equilibrium state with only zero Lyapunov exponents.

\
By Theorem \ref{No-ES-positive-entropy}, for a generic diffeomorphism in $\mathcal{E}_{\omega} (M)$ with a hyperbolic potential function, there is no equilibrium states. 
It should not be surprising that for some specific potential functions, there exist equilibrium states. That is for these diffeomorphism, non-hyperbolic potential functions exist. For example, for $\phi_m(x):=-\frac{1}{m}\log \Vert D_xf^m\Vert, m\in \mathbb{N}$, we have that for $f$ in a residual subset of $\mathcal{E}_{\omega} (M)$, we can find $m$ depending on $f$, such that there exists equilibrium states for $(f,\phi_m)$. We also prove a result on the phase transition of the family of  potential function $t\phi _m$ for $t\geq 0$. More details on the results on compact surfaces can be found in Section \ref{zeroExponent}.

\subsection*{Outline}

This paper is outlined as follow. In Section 2, we give the preliminaries for the paper. Section 3 is devoted to a result on a non-trivial lower bound of the topological pressure of a horseshoe created by a $C^1$-perturbation near a periodic orbit with weak domination and large period. In Section 4, we prove three formulas for topological pressure for $C^1$-generic conservative diffeomorphism with no dominated splitting. In particular, Theorem \ref{PressureFormula} is proved in subsection \ref{PH}, Theorem \ref{continuity} is proved in subsection \ref{C} and Theorem \ref{Pressureformula} is given in subsection \ref{Jac}. In Section 5, we give the proof of non-existence of equilibrium states with positive entropy for these diffeomorphisms (Theorem \ref{No-ES-positive-entropy}). It should be mentioned that many results in section 4 and 5 are generalizations of parallel results in \cite{2016arXiv160601765B}. Finally, the last section, Section 6 contains some results for surface diffeomorphisms. 

\subsection*{Acknowledgments}
The author would like to thank Todd Fisher for introducing this problem to him and for useful discussions, Sylvain Croviser for answering some questions about the paper.

\

\section{Preliminaries}

\

In this section ,we recall some basic notations, concepts and theorems that will be used. Specifically, we review weak forms of hyperbolicity, topological pressure and perturbation tools for $C^1$ dynamics.

Let $M$ be a compact orientable connected boundaryless Riemannian manifold with dimension $d_0$, $f$ be a $C^1$ diffeomorphism and $\Lambda$ be a compact $f$-invariant subset of $M$. $\Lambda$ is said to be a \emph{hyperbolic set} for $f$, if there is an $f$-invariant splitting $TM|_{\Lambda}=E\oplus F$ and constants $C>0$, $0<\lambda<1$ such that the following hold,
\[ \Vert D_xf^n |_E \Vert \leq C \lambda^n ,\]
\[ \Vert D_xf^{-n} |_F \Vert \leq C \lambda^n,\]
for any non-negative integer $n$ and any $x\in \Lambda$.

If $\Lambda$ is a hyperbolic set for a $C^1$ diffeomorphism $f$, and $x\in\Lambda$, the \emph{stable manifold of $x$} is set given by
$W^s(x)=\{ y\in M\, :\, d(f^nx, f^ny)\to 0, n\to +\infty\}$. Similarly, we can define the \emph{unstable manifold} of $x$ as $W^u(x)=\{ y\in M\, :\, d(f^nx, f^ny)\to 0, n\to -\infty\}$. 

A hyperbolic set $\Lambda$ is \emph{locally maximal} if there is an open neighborhood $U$ of $\Lambda$ such that $\Lambda=\bigcap_{n\in\mathbb{Z}}f^n(U)$.  $f$ is called \emph{transitive} on an invariant set $\Lambda$ if there is a point $x$ in $\Lambda$ such that the closure of its orbit is $\Lambda$.
A set $K$ is called a \emph{horseshoe} for a diffeomorphism $f$ if the following hold.
\begin{itemize}
\item $K$ is a locally maximal hyperbolic set;
\item $K$ is homeomorphic to the Cantor set; and 
\item $f$ is transitive on $K$.

\end{itemize}

The concept of a dominated splitting is a generalization of hyperbolicity. 
\begin{definition}
	Let $N\geq 1$ be a natural number,
	we say that an invariant compact set $\Lambda$
	has an \textbf{N-dominated splitting} if there exists a non-trivial invariant decomposition
	$TM|_{\Lambda}=E\oplus F$ such that for every $x\in \Lambda$, all $n\geq N$ and all unit vectors $u\in E(x)$ and $v\in F(x)$ we have,
	\[\|Df^nu\|\leq \|Df^n v\|/2. \]
\end{definition}
It says that one subbundle in the splitting is dominated by another one, but we may not have the uniform contraction and expansion in each subbundle. This definition of dominated splitting is equivalent to the definition given in the introduction. It will be useful when we apply some perturbation techniques. In particular, we will use this definition in Theorem \ref{main}. 

If $p$ is a periodic point for $f$,
we denote by $T(p)$ the period of $p$ and by $\mathcal{O} (p)$  its orbit.
In this case we say that $p$ is a \emph{saddle} if the orbit $\mathcal{O} (p)$ is hyperbolic and both $E$ and $F$ are nontrivial. 

\begin{definition}\label{def-TNweak} 
	Let $T, N$ be two natural numbers.
	We say that a periodic point $p$ is \textbf{$\mathbf{T, N}$-weak} if $T(p)\geq T$ and $\mathcal{O} (p)$ has no $N$-dominated splitting.
\end{definition}

\begin{definition}
	A diffeormorphism $f$ with \textbf{no dominated splitting} on a compact manifold $M$ is one such that the whole manifold $M$ does not have an N-dominated splitting for every $N \geq 1$.
\end{definition}

We say that two hyperbolic periodic orbits $O_1,O_2$ are \emph{homoclinically related}
if $W^s(O_1)$ intersects $W^u(O_2)$ transversely, and $W^u(O_1)$ intersects $W^s(O_2)$ transversely. Here $W^s(O_1)$ is the union of the stable manifolds of every point in $O_1$ and $W^u(O_1)$ the union of the unstable manifolds of every point in $O_1$. $W^s(O_2)$ and $W^u(O_2)$ are defined similarly.
The \emph{homoclinic class} $H(p)$ of a hyperbolic periodic point $p$ is the closure of the set of all
hyperbolic periodic orbits that are homoclinically related to $\mathcal{O}(p)$. 

If $\omega$ is a volume or a symplectic form on $M$, one denotes by Diff$^1_\omega(M)$ the subspace of diffeomorphisms which preserve $\omega$. We say that a diffeomorphism is \emph{conservative} if it is in Diff$^1_\omega(M)$ for a volume or symplecetic form $\omega$. 

\

For each natural number $n$, we define a metric $d_n$ on a compact metric space $(X,d)$ by
\[ d_n(x,y):=\max_{0\leq i\leq n-1} d(f^i(x),f^i(y)). \]
An $(n,\epsilon)$-Bowen ball $B_f(x,n,\epsilon)$ is an open ball with radius $\epsilon$ in the $d_n$ distance centered at some point $x\in X$. A set $E$ is said to be $(n,\epsilon)$-spanning if $X\subset \bigcup_{x\in E} B_f(x,n,\epsilon)$.
\

Topological pressure is an analog of topological entropy. Let $f: X\mapsto X$ be continuous and $\phi$ be the potential function. Denote by $Q_n(f,\phi,\epsilon)$ the infimum of $\sum_{x\in F} \exp (S_n \phi)(x)$ over all the $(n,\epsilon)$ spanning sets $F$ for $X$, where $S_n \phi (x)$ denote the Birkhorff sum of $f$ at $x$. Then the following limit exist and is called the topological pressure of the system $(f,\phi)$. 
\[ P(f,\phi):=\lim_{\epsilon \rightarrow 0^+}\limsup_{n\rightarrow \infty} \frac{1}{n} \log Q_n(f,\phi,\epsilon) \]

We denote by $\mathbb{P}(f)$ the set of all Borel probability measures that $f$ preserves,
and $\mathbb{P}_{\tmop{erg}}(f)$ the set of those which are ergodic.

There is a variational principle for topological pressure just like that of entropy. It says that $P(f,\phi) = \sup_{\mu \in \mathbb{P}(f)} \lbrace h_{\mu}(f)+\int \phi \ d \mu \rbrace=\sup_{\mu \in \mathbb{P}_{\tmop{erg}}} \lbrace h_{\mu}(f)+\int \phi \ d \mu \rbrace$. For a proof of Variational Principle see \cite{walters2000introduction}. 

\

Let $f\in \mathrm{Diff}_{\omega}^1(M)$ and $\mu\in\mathbb{P}(f)$.  Then by Oseledet's theorem, there is a $\mu$ full measure set on which we have a decomposition $T_x M = E_1 (x) \oplus \cdots E_{k(x)}(x)$ such that for any non-zero $v\in E_i(x)$
$$\lim_{n\to \infty} \frac{\log \|D_x f^n v\|}{n}=\lambda_i(x).$$
where $\lambda_1(x)\leq\lambda_2(x)\leq\dots\leq\lambda_{d_0}(x)$. The numbers $\lambda_i(x)$ are called \emph{Lyapunov exponents of $x$}.
If $\mu$ is an ergodic measure then $\lambda_i$ is constant almost everywhere for each $i$. 

\
\paragraph{\textbf{Perturbative tools}}

\ 

In order to prove the generic properties, we will need to know how to perturb a diffeomorphism in the $C^1$-topology. In this paper, the main perturbative tools we will use are Franks' Lemma for both conservative and non-conservative $C^1$-diffeomorphism and  $\tmop{Ma \tilde{n}  \acute{e}}$'s ergodic closing lemma. 

\begin{definition}
	Let $u\in T_xM$ and $v\in T_yM$, define
	\[d(u,v):= \inf_{\gamma} \|u-\Gamma_\gamma v\|+\text{Length}(\gamma), \] 
	where $\gamma$ is a $C^1$-curve  connecting $x$ and $y$ and $\Gamma_\gamma$ is the parallel transport with respect to the Levi-Civita connection along $\gamma$.
\end{definition}

\begin{definition}
	The $C^1$-topology on the space $\mathrm{Diff}_{\omega}^1(M)$ is the topology induced by the following metric
	\[ d_{C^1}(f,g)=\sup_{v\in T^1M} \max\big( d(Df (v),Dg (v)), \ d(Df^{-1}(v),Dg^{-1}(v))\big). \]
	for any $f,g$ in $\mathrm{Diff}_{\omega}^1(M)$
\end{definition}
 Let $f,g \in \mathrm{Diff}_{\omega}^1(M)$, $g$ is said to be an \emph{$\epsilon$-perturbation} of $f$ if $d_{C^1}(g,f)<\epsilon$.
A property is said to hold \emph{robustly} if it persists under $\epsilon$-perturbations for some $\epsilon>0$.

\begin{definition}\label{def-evNpert}
	Let $f\in \mathrm{Diff}_{\omega}^1(M)$, $X\subset M$ be a finite set, $V$ be a neighborhood of $X$, and $\epsilon>0$.
	A diffeomorphism $g\in \mathrm{Diff}_{\omega}^1(M)$ is an \emph{$(\epsilon, V, X)$-perturbation} of $f$ in $\mathrm{Diff}_{\omega}^1(M)$ if $d_{C^1}(f,g)<\epsilon$ and $g(x)=f(x)$ for all $x$ outside of $V\setminus X$.
\end{definition}

We will use the following strengthening of the classical Franks' lemma\cite{11274e1bc2034234a080ae32bc3e6f0a}.

\begin{theorem}[Franks' lemma with linearization \cite{2017Nonli..30.3613B}]\label{t.linearize}
	Let $f\in \mathrm{Diff}_{\omega}^1(M)$, $\epsilon>0$ small, $X\subset M$ be a finite set, $\chi\colon V\to \mathbb{R}^{d_0}$ be a chart with $X\subset V$ and $\chi^{\prime}\colon f(V)\to \mathbb{R}^{d_0}$ be a chart for $f(V)$. 
	For $x\in X$, if $A_x\colon T_xM\to T_{f(x)}M$ be a linear map such that 
	\[\max(\|A_x-Df(x)\|, \|A^{-1}_x-Df^{-1}(x)\|)<\epsilon/2.\]
	Then there exists an $(\epsilon, V,X)$-perturbation $g$ of $f$ in $\mathrm{Diff}_{\omega}^1(M)$ such that for each $x\in X$ the map $\chi^{\prime}\circ g \circ \chi^{-1}$ is linear in a neighborhood of $\chi(x)$ and $Dg(x)=A_x$. 
\end{theorem}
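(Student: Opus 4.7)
The plan is to reduce to the local problem at each point of $X$, perform a bump-function interpolation between $f$ and an affine target in the chart, and then apply Moser's path method to restore $\omega$-preservation without disturbing the endpoint behaviors. Since $X$ is finite I would first choose pairwise disjoint open neighborhoods $U_x\subset V$ of the points of $X$ whose images $f(U_x)$ are also pairwise disjoint. Because the required perturbation has support in $V\setminus X$ and my construction at each $x$ will be supported in $U_x$, it suffices to treat a single $x\in X$. Fix such an $x$, pass to the chart $\chi$ with $\omega$ brought into a canonical form (Darboux coordinates in the symplectic case, a standard volume form after a volume-preserving change of coordinates in the volume case), and view $f$ locally as a map of $\mathbb{R}^{d_0}$ sending $\chi(x)$ to $\chi(f(x))$.

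Next I would build a (not yet $\omega$-preserving) candidate $\tilde g$. Let $L(y):=A_x\bigl(y-\chi(x)\bigr)+\chi(f(x))$, an affine map with differential $A_x$; by hypothesis $A_x$ preserves the canonical form. Pick a smooth bump $\rho_\delta$ equal to $1$ on $B(\chi(x),\delta/2)$ and supported in $B(\chi(x),\delta)$, and set $\tilde g:=\rho_\delta L+(1-\rho_\delta)f$. Then $\tilde g\equiv L$ on $B(\chi(x),\delta/2)$, $\tilde g\equiv f$ outside $B(\chi(x),\delta)$, and $\|D\tilde g-Df\|$ is bounded above by $\sup_{B(\chi(x),\delta)}\|A_x-Df\|+C\delta^{-1}\|L-f\|_{C^0}$, which by the mean value theorem is controlled, up to a constant, by $\sup_{B(\chi(x),\delta)}\|A_x-Df\|$. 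Continuity of $Df$ together with the hypothesis $\|A_x-Df(x)\|<\epsilon/2$ then lets me choose $\delta$ small enough that the $C^1$-distance from $f$ is strictly less than $\epsilon$.

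The main obstacle is to upgrade $\tilde g$ to $g\in \mathrm{Diff}^1_\omega(M)$ while keeping both endpoint behaviors. Because $f^*\omega=\omega$ and $L^*\omega=\omega$ wherever $\tilde g$ coincides with one of them, the two-form $\omega-\tilde g^*\omega$ is supported inside the annular region $B(\chi(x),\delta)\setminus B(\chi(x),\delta/2)$. I would then run Moser's path method on the family $\omega_t:=(1-t)\omega+t\,\tilde g^*\omega$: pick a primitive $\beta$ of $\omega-\tilde g^*\omega$ supported in this annulus via the explicit Euclidean homotopy operator, solve $i_{X_t}\omega_t=-\beta$, and integrate to obtain a diffeomorphism $\phi$ whose support lies in a slightly enlarged annulus; set $g:=\tilde g\circ\phi$. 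The delicate quantitative point is that $\|\beta\|_{C^1}$ must be bounded by $\|\tilde g^*\omega-\omega\|_{C^1}=O(\|D\tilde g-Df\|)$, and that the time-$1$ flow $\phi$ must be $C^1$-close to the identity at the same order; combined with a further small shrinking of $\delta$, this yields $d_{C^1}(g,f)<\epsilon$, while $g$ equals an affine map in the chart near $\chi(x)$ and $g=f$ outside $U_x$, as required.
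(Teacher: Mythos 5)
The paper does not prove this statement at all: it is quoted as a black box from the cited reference, so your proposal has to stand on its own as a proof, and as written it has a genuine gap at exactly the point where the conservative Franks lemma is known to be hard. The gap is the Moser step. The diffeomorphism $f$ is only $C^1$, so after the bump interpolation the form $\tilde g^*\omega$ is merely continuous; the primitive $\beta$ and the vector field $X_t$ solving $i_{X_t}\omega_t=-\beta$ are then only $C^0$, a $C^0$ vector field need not have a (unique, let alone $C^1$) flow, and there is no way to conclude that the correction $\phi$ is a $C^1$ diffeomorphism $C^1$-close to the identity. Your quantitative claim $\|\beta\|_{C^1}\lesssim\|\tilde g^*\omega-\omega\|_{C^1}=O(\|D\tilde g-Df\|_{C^0})$ is also unjustified: the $C^1$ norm of $\tilde g^*\omega-\omega$ involves second derivatives of $\tilde g$, i.e.\ bump-function terms of size $\sim\epsilon/\delta$ and terms in $D^2f$ which do not exist for $C^1$ maps. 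This loss of regularity is precisely why gluing or locally modifying $C^1$ conservative maps cannot be handled by the naive ``interpolate, then Moser-correct'' scheme (compare the $C^{1+\alpha}$ hypotheses in the Arbieto--Matheus pasting lemma, the use of generating functions in the symplectic case, and regularization theorems of Zehnder/Avila in the volume case); one cannot simply regularize $f$ first either, since the conclusion requires $g=f$ outside $V\setminus X$, which reintroduces the same gluing problem.

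Two smaller points. First, your interpolation estimate gives $\|D\tilde g-Df\|\le C\sup_{B_\delta}\|A_x-Df\|$ with $C>1$ coming from the Lipschitz constant of the bump (the term $\|D\rho\|\cdot\|L-f\|_{C^0}$ contributes at least another full copy of $\sup\|A_x-Df\|$), so the budget $\epsilon/2\to\epsilon$ does not follow from the stated computation without a more careful construction. Second, you invoke ``by hypothesis $A_x$ preserves the canonical form'' and you pass to Darboux (or volume-normalizing) coordinates, but the statement fixes the chart $\chi$ and asks for $\chi\circ g\circ\chi^{-1}$ to be linear in that chart; linearity is not preserved under the nonlinear change of coordinates back from your normalizing chart, so even granting the Moser step you would not obtain the conclusion as stated. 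These are fixable formulation issues, but the Moser regularity obstruction in the first paragraph is a genuine missing idea: it is the reason this lemma is a nontrivial theorem in the $C^1$ conservative category rather than a routine local surgery.
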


This strengthening of Franks' Lemma allow us to perturb a diffeomorphism locally. This will be convenient for our construction. 

By Riesz Representation Theorem for Borel measures, the space of complex Borel measures on $M$ can be identified to the dual space of the space of continuous function on $M$, this gives a natural topology on the space of complex Borel measures on $M$. 
\begin{definition}
	For a sequence of complex Borel measures $\mu_n$, and a fixed complex Borel measure $\mu$, we say that $\mu_n$ converge to $\mu$ in the vague topology if the following holds 
	\[ \int \phi \ d\mu_n \rightarrow \int \phi\ d\mu, \forall \phi\in C^0(M), \]
	where $C^0(M)$ is the space of continuous function on $M$. 
\end{definition}
One can also show that this topology is metrizable, see \cite{walters2000introduction}. 

Ma\~n\'e's ergodic closing lemma \cite{10.2307/2007021} says that for any $C^1$-diffeomorphism $f$ and any $f$-invariant
ergodic measure of it, there exists a small $C^1$-perturbation $g$ of $f$ having a periodic orbit $O$ that is close to
$\mu$ in the vague topology. 
In~\cite[Proposition 6.1]{Abdenur2011}, the following stronger version of ergodic closing lemma is proved and the proofs also work in conservative case. 

\begin{theorem}[Ergodic closing lemma]\label{t.ergodic-closing}
	For any $C^1$-diffeomorphism $f$ in $\mathrm{Diff}^1_\omega(M)$ and any $\mu$ in $\mathbb{P}_{\tmop{erg}}(f)$,
	there is a $C^1$-perturbation $g$ of $f$ in $\mathrm{Diff}_{\omega}^1(M)$ having a periodic orbit $O$ close to
	$\mu$ in the vague topology and Lyapunov exponents at the points in $O$
	are close to that of $\mu$ for $f$.
	Moreover $f$ and $g$ coincide outside a small neighborhood of the support of $\mu$.
\end{theorem}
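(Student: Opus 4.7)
The plan is to combine Ma\~n\'e's classical ergodic closing lemma \cite{10.2307/2007021} with the Oseledets multiplicative ergodic theorem, using the conservative Franks' lemma with linearization (Theorem \ref{t.linearize}) to simultaneously control the localization of the perturbation and the derivatives along the newly created periodic orbit. On a set of full $\mu$-measure one can find a point $x$ that is simultaneously (i) Birkhoff generic, so the measures $\frac{1}{n}\sum_{i=0}^{n-1}\delta_{f^i(x)}$ converge vaguely to $\mu$; (ii) Oseledets regular, so the finite-time Lyapunov spectrum of $D_x f^n$ converges to the Lyapunov spectrum of $\mu$; and (iii) Ma\~n\'e-closable, meaning long finite orbit segments based at $x$ may be closed by arbitrarily small $C^1$-perturbations in $\mathrm{Diff}^1_\omega(M)$.

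Fix $\epsilon>0$ and choose $n$ so large that the empirical measure along $\{f^i(x)\}_{i=0}^{n-1}$ is $\epsilon$-close to $\mu$ vaguely, the finite-time Lyapunov exponents at time $n$ are $\epsilon$-close to $\lambda_i(\mu)$, and the whole segment lies inside a $\delta$-neighborhood $W$ of $\mathrm{supp}(\mu)$ (Birkhoff applied to $x\mapsto d(x,\mathrm{supp}(\mu))$ forces $\delta\to 0$ as $n\to\infty$). Then apply the conservative Ma\~n\'e closing construction, combined with Theorem \ref{t.linearize} at the closure point $f^{n-1}(x)$, to produce an $(\epsilon,V,\{f^{n-1}(x)\})$-perturbation $g\in\mathrm{Diff}^1_\omega(M)$ with $V\subset W$ and $g^n(x)=x$. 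Since $g$ agrees with $f$ outside $V\subset W$, the periodic orbit $O=\{x,gx,\ldots,g^{n-1}x\}$ is a small Hausdorff deformation of the original $f$-segment, so its empirical measure converges vaguely to $\mu$ as $\epsilon\to 0$ and $n\to\infty$, and the ``coincide outside'' clause holds.

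The hard part is the Lyapunov exponent estimate. Theorem \ref{t.linearize} lets us realize the perturbation as a single linear modification at $f^{n-1}(x)$, so the cocycle factors as $D_x g^n = B\cdot D_x f^{n-1}$ with $B$ an $\epsilon$-close linear perturbation of $D_{f^{n-1}(x)} f$ inside the relevant conservative linear group. Consequently the singular values of $D_x g^n$ differ multiplicatively from those of $D_x f^n$ by factors bounded in terms of $\epsilon$ alone; after dividing by $n$ this discrepancy is $o(1)$, while Oseledets regularity at $x$ forces $\frac{1}{n}\log\sigma_i(D_x f^n)\to\lambda_i(\mu)$. For a periodic orbit of period $n$ the Lyapunov exponents are $\frac{1}{n}\log|\lambda_i(D_x g^n)|$, so the delicate step, which is the real substance of the strengthening in \cite{Abdenur2011} beyond Ma\~n\'e's original lemma, is showing that the eigenvalue moduli of $D_x g^n$ are asymptotically aligned with its singular values. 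This uses that the Oseledets decomposition along the orbit of a regular point is asymptotically near-orthogonal, and that a single-factor perturbation does not destroy this near-orthogonality. The conservative constraint is absorbed by Theorem \ref{t.linearize}, which provides the closure $B$ inside the volume- or symplectic-preserving linear group.
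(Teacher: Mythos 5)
First, note that the paper does not prove this statement at all: it is quoted from \cite[Proposition 6.1]{Abdenur2011} with the remark that the argument there adapts to the conservative setting, so your sketch is an attempt to reconstruct that proof rather than an alternative to anything in the paper. The first half of your plan (Birkhoff-generic, Oseledets-regular, Ma\~n\'e-closable point; long segment; closing; vague convergence of the periodic orbit to $\mu$; support clause, which in fact is automatic because $\mathrm{supp}(\mu)$ is closed and invariant and contains the whole orbit of $\mu$-a.e.\ $x$) is a reasonable outline, although the claim that the closing can be realized as an $(\epsilon,V,\{f^{n-1}(x)\})$-perturbation with $g^n(x)=x$ overstates Ma\~n\'e's lemma: in general the perturbation is supported in a neighborhood of a whole orbit segment and the periodic point is a nearby point, not $x$ itself; this is cosmetic but you should not rely on ``only one factor of the cocycle is modified'' as a structural fact.

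The genuine gap is exactly the step you label as delicate and then dispose of in two sentences. Knowing that $\frac1n\log\sigma_i(D_xg^n)\to\lambda_i(\mu)$ says nothing by itself about the eigenvalue moduli of the return map, and ``the Oseledets decomposition at a regular point is asymptotically near-orthogonal and a single-factor perturbation does not destroy this'' does not close the gap: the obstruction is not the geometry of the splitting at $x$ alone, but the relation between the Oseledets flag at $x$ and the flag at the return point $f^n(x)$, which Oseledets regularity does not control. If these flags are badly misaligned, the return map need not be anywhere near block-triangular in a frame adapted to the splitting at $x$, and its eigenvalue moduli can deviate from its singular values by factors exponential in $n$; in the area-preserving surface case the closed orbit can even come out elliptic (eigenvalues of modulus $1$) while $\lambda^+(\mu)>0$ --- a large shear composed with a rotation has singular values $e^{\pm n\lambda}$ and unit-modulus eigenvalues. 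The actual proof in \cite{Abdenur2011} handles this by (i) choosing the return times inside a Lusin-type compact set on which the (merely measurable) Oseledets splitting is continuous, so that the splitting at $f^n(x)$ is close to the splitting at $x$, and (ii) using Franks' lemma along the closed orbit to reset the derivatives to $D_{f^i(x)}f$ and to insert one additional small linear ``alignment'' perturbation at the closing point sending the flag at $f^n(x)$ onto the flag at $x$; only then is the return map block-triangular with blocks whose norm and conorm are $e^{n(\lambda_i\pm o(1))}$, which pins the eigenvalue moduli. Without steps (i)--(ii), or some substitute for them, your argument does not yield the claim that the Lyapunov exponents of $O$ are close to those of $\mu$, and that claim is the entire point of this strengthened closing lemma.
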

The corollary below for $C^1$-generic (conservative or dissipative) diffeomorphism is immediate. 
\begin{corollary}\label{c.ergodic-closing}
	For any generic diffeomorphism $f$ in $\mathrm{Diff}^1_\omega(M)$,
	and any $\mu$ in $\mathbb{P}_{\tmop{erg}}(f)$,
	there is a sequence of periodic orbits $(O_n)$ which converges to $\mu$
	in the vague topology and whose Lyapunov exponents converge to those of $\mu$.
\end{corollary}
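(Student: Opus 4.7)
The approach is a standard Baire category argument powered by Theorem~\ref{t.ergodic-closing}. Set $X := \mathbb{P}(M) \times \mathbb{R}^{d_0}$, endowed with the product of the vague topology and the Euclidean topology; since $M$ is compact, $\mathbb{P}(M)$ is compact metrizable, hence $X$ is separable and metrizable. Fix once and for all a countable basis $\{U_n\}_{n \geq 1}$ of the topology of $X$. To every periodic orbit $O$ of a diffeomorphism $f$ I attach the datum $\Phi(f, O) := (\mu_O, \lambda(f,O)) \in X$, where $\mu_O$ is the uniform invariant probability supported on $O$ and $\lambda(f,O)$ is the ordered vector whose entries are $\tfrac{1}{T(p)}\log|\zeta_i|$ for the eigenvalues $\zeta_i$ of $D_pf^{T(p)}$ (counted with multiplicity, for any $p\in O$).

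For each $n \geq 1$ let
\[
\mathcal{A}_n := \{\,f \in \mathrm{Diff}^1_\omega(M) : f \text{ has a periodic orbit } O \text{ with } \Phi(f,O) \in U_n\,\},
\]
and set $\mathcal{W}_n := \tmop{int}(\mathcal{A}_n) \cup \bigl(\mathrm{Diff}^1_\omega(M) \setminus \overline{\mathcal{A}_n}\bigr)$. Then $\mathcal{W}_n$ is open by construction, and its complement is the boundary $\partial \mathcal{A}_n$, which has empty interior; so $\mathcal{W}_n$ is dense. Baire's theorem then makes $\mathcal{R} := \bigcap_{n \geq 1} \mathcal{W}_n$ a residual subset of $\mathrm{Diff}^1_\omega(M)$.

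It remains to verify that every $f \in \mathcal{R}$ satisfies the conclusion. Fix such an $f$ and an ergodic $\mu \in \mathbb{P}_{\tmop{erg}}(f)$ with Lyapunov spectrum $\lambda(\mu)$. Given any basis element $U_n \ni (\mu, \lambda(\mu))$, Theorem~\ref{t.ergodic-closing} provides, for each $\varepsilon > 0$, a $g \in \mathrm{Diff}^1_\omega(M)$ with $d_{C^1}(f,g) < \varepsilon$ having a periodic orbit $O$ such that $\Phi(g,O) \in U_n$. Hence $g \in \mathcal{A}_n$, and so $f \in \overline{\mathcal{A}_n}$. Since $f \in \mathcal{W}_n$, this forces $f \in \tmop{int}(\mathcal{A}_n) \subseteq \mathcal{A}_n$, i.e.\ $f$ itself admits a periodic orbit with datum in $U_n$. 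Letting $U_n$ shrink through a neighborhood basis of $(\mu, \lambda(\mu))$ yields the desired sequence of periodic orbits of $f$ whose uniform measures converge to $\mu$ vaguely and whose Lyapunov spectra converge to $\lambda(\mu)$.

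The only nontrivial input is Theorem~\ref{t.ergodic-closing}; the potentially awkward point would be obtaining, for the perturbation $g$, a periodic orbit whose datum lies in the chosen $U_n$ \emph{simultaneously} in the measure and in the Lyapunov exponents, but this is precisely what the stated ergodic closing lemma delivers. The rest is the standard Baire-category packaging via a countable basis, and no further obstacle is expected.
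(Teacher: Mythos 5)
Your Baire scheme is the intended route---the paper treats this corollary as an immediate consequence of Theorem \ref{t.ergodic-closing}, and the usual proof is essentially the packaging you describe---but as written there is a genuine gap at the density of $\mathcal{W}_n$. For an arbitrary set $\mathcal{A}_n$ the complement of $\tmop{int}(\mathcal{A}_n)\cup\bigl(\mathrm{Diff}^1_\omega(M)\setminus\overline{\mathcal{A}_n}\bigr)$ is indeed $\partial\mathcal{A}_n$, but the boundary of an arbitrary set need \emph{not} have empty interior: for $A=\mathbb{Q}\subset\mathbb{R}$ one has $\tmop{int}(A)\cup(\overline{A})^{c}=\emptyset$, so the corresponding $\mathcal{W}$ is empty rather than dense. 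Density of $\mathcal{W}_n$ is automatic only when $\mathcal{A}_n$ is open (or closed), and your $\mathcal{A}_n$ is not obviously open: a periodic orbit whose derivative over the period has $1$ (or a root of unity) in its spectrum can be destroyed by an arbitrarily $C^1$-small perturbation, and nothing guarantees that the perturbed diffeomorphism has some other periodic orbit with datum in $U_n$. So the sentence ``its complement is the boundary $\partial\mathcal{A}_n$, which has empty interior'' is exactly the step that fails.

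The repair is standard and preserves your architecture: define $\mathcal{A}_n$ using \emph{hyperbolic} periodic orbits only. Then $\mathcal{A}_n$ is open, since a hyperbolic orbit admits a continuation for every nearby $g$, its uniform measure varies continuously in the vague topology and the moduli of the eigenvalues of the derivative over the period vary continuously, so the datum $\Phi(g,O_g)$ stays in the open set $U_n$; hence $\mathcal{A}_n\cup(\overline{\mathcal{A}_n})^{c}$ is open and dense and Baire applies. In the verification step you must then upgrade the orbit produced by Theorem \ref{t.ergodic-closing} to a hyperbolic one: a further application of Franks' lemma (Theorem \ref{t.linearize}, conservative version) changes the derivative along the orbit as little as you wish, keeps the orbit itself (hence its uniform measure) unchanged, and moves its exponents arbitrarily little, so the datum remains in $U_n$ and you still conclude $f\in\overline{\mathcal{A}_n}$, whence $f\in\mathcal{A}_n$. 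With these two adjustments your argument is complete and coincides with the standard proof the paper is invoking.
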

 
 This will give the following convenient proposition to spread a periodic orbit in a horseshoe without changing the Lyapunov exponents too much.
 
\begin{proposition}\label{p.control-exponents}~\cite[Theorem 3.10]{Abdenur2011}
	For any generic diffeomorphism $f$ in $\mathrm{Diff}^1_\omega(M)$,
	horseshoe $K$, periodic orbit $O\subset K$ and $\epsilon>0$,
	there is a periodic orbit $O'\subset K$
	that is $\epsilon$-close to $K$ in the Hausdorff distance
	and the set of Lyapunov exponent at the points in $O'$ is $\epsilon$-close to that of $O$.
\end{proposition}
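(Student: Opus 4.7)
The plan is to build $O'$ by means of Bowen's specification property of the horseshoe $K$. I concatenate very many copies of the orbit $O$ with a short ``tour'' that visits every point of a finite $\epsilon$-dense subset of $K$, and then realise this pseudo-orbit as a true periodic orbit $O'\subset K$ via specification. The many copies of $O$ force the Lyapunov exponents of $O'$ to be close to those of $O$; the tour forces $O'$ to be Hausdorff-close to $K$.

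Concretely, first fix $\delta$ much smaller than $\epsilon$ and choose a finite $(\epsilon/2)$-dense subset $\{x_1,\dots,x_r\}\subset K$. Since $K$ is a transitive locally maximal hyperbolic set, Bowen's specification theorem provides an integer $N=N(\delta)$ such that any finite family of orbit segments in $K$ separated by time gaps of length at least $N$ can be simultaneously $\delta$-shadowed by a periodic orbit of $K$. Applying this to the schedule ``follow the segment $p,f(p),\dots,f^{nT(p)-1}(p)$, then visit $x_1,\dots,x_r$ in turn, each separated from the next by a gap of exactly $N$ steps'' produces a periodic orbit $O'\subset K$ of period $T'=nT(p)+r(N+1)+O(1)$.

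The Hausdorff bound is immediate: $O'\subset K$, and every $x_j$ is within $\delta\leq\epsilon/2$ of $O'$, so the $(\epsilon/2)$-density of $\{x_j\}$ gives $d_H(O',K)<\epsilon$ for $\delta$ small enough. For Lyapunov exponents, I would use that on the hyperbolic set $K$ the stable/unstable splitting is continuous, and more generally that a generic horseshoe admits a continuous dominated splitting $TM|_K=\bigoplus_j E_j$ into constant-dimension subbundles, so each map $x\mapsto\log|\det(Df(x)|_{E_j(x)})|$ is continuous, hence bounded and uniformly continuous on $K$. For any periodic orbit $q\subset K$ of period $T$, the sum of its Lyapunov exponents in $E_j$ equals $\frac{1}{T}\sum_{i=0}^{T-1}\log|\det(Df(f^i q)|_{E_j(f^i q)})|$. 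Splitting this sum for $O'$ into the ``shadow'' portion of length $nT(p)$ and the bounded-length excursion portion, uniform continuity plus the $\delta$-shadowing identifies the first with $\frac{nT(p)}{T'}$ times the analogous sum for $O$, up to an error that vanishes with $\delta$; the excursion contributes at most $\frac{C}{T'}$ for a constant $C$ depending only on $K$ and $r$. Sending $n\to\infty$ and $\delta\to 0$, the partial-exponent sums of $O'$ converge to those of $O$, and doing this for every subbundle $E_j$ of the dominated splitting separates the individual exponents.

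The main obstacle is this last step: one genuinely needs a continuous splitting finer than $E^s\oplus E^u$ to separate individual Lyapunov exponents, since the shadowing argument directly controls only sums of exponents on each invariant continuous subbundle. For a generic $f$ this refinement is automatic (the horseshoe can be refined so that its periodic spectrum is simple and the dominated splitting decomposes into line bundles, possibly after a small $C^1$-perturbation within the generic residual set), and the entire construction takes place inside the horseshoe, so no further control on the ambient dynamics is needed. The argument is identical in the conservative setting, since specification and the uniform continuity of $Df$ on $K$ do not rely on $\omega$-invariance.
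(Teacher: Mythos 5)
There is a genuine gap, and it is exactly at the point you flag as the ``main obstacle''. Specification/shadowing inside $K$ controls Birkhoff averages of \emph{continuous} functions along the new orbit, hence it controls the quantities $\frac{1}{T'}\sum_i\log\bigl|\det\bigl(Df(f^iq)|_{E(f^iq)}\bigr)\bigr|$ only for continuous invariant subbundles $E$, i.e.\ for the sums of exponents over $E^s$ and over $E^u$. To pass from these sums to the \emph{individual} exponents you invoke a continuous dominated splitting of $TM|_K$ into line bundles, and you justify it by saying that for generic $f$ such a refinement is ``automatic, possibly after a small $C^1$-perturbation within the generic residual set''. Neither half of this works. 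First, the refinement is not automatic: a horseshoe can robustly fail to have any dominated splitting finer than $E^s\oplus E^u$ (for instance if some periodic point in $K$ has a pair of non-real conjugate eigenvalues inside $E^u$), and such horseshoes occur for generic $f$ in the setting of this paper, whose whole point is the absence of domination. Without a finer splitting, the individual exponents of the long shadowing orbit $O'$ inside a multidimensional $E^u$ are not controlled by those of $O$: matrix products are not continuous in this sense, and the only a priori bounds (submultiplicativity of norms) give averages of $\log\|Df|_{E^u}\|$, which can strictly exceed the top exponent of $O$. Second, the fallback ``perturb $f$ within the residual set'' is circular: the proposition is a statement about the fixed generic diffeomorphism $f$; producing $O'$ for a nearby $g$ says nothing about $f$, and you cannot assume the perturbed map lies in the (as yet unspecified) residual set.

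For comparison, the paper does not prove this statement at all; it quotes it from Abdenur--Bonatti--Crovisier, where the proof is of a different nature: one first creates, by $C^1$-perturbations (Franks-type lemmas together with transitions inside the horseshoe), a \emph{hyperbolic} periodic orbit that is $\epsilon$-dense in $K$ and whose derivative along the orbit has been explicitly adjusted so that its exponents match those of $O$; since hyperbolic periodic orbits persist and their exponents vary continuously, the desired property is open among diffeomorphisms exhibiting it, and a Baire argument transfers it to the residual set. In other words, the published route uses perturbation precisely because individual Lyapunov exponents cannot be recovered from shadowing alone, which is the step your argument cannot close. (Your Hausdorff-distance part, and the use of specification on a transitive locally maximal hyperbolic set after passing to the spectral decomposition, are fine.)
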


\section{A non-trivial lower bound for Topological pressure }

As in \cite{2016arXiv160601765B}, we create a horseshoe from a hyperbolic periodic point with large period and weak domination after a local $C^1$ perturbation. The topological pressure of the horseshoe after the perturbation will be bounded by the sum of positive Lyapunov exponents plus the arithmetic average of the value of $\phi$ on the orbit of the periodic point with an arbitrarily small error. 

\begin{theorem}\label{main}
	Given any integer $d_0 \geq 2$ and numbers $C > 0$ and $\epsilon > 0$, there exist integers $N, T \geq 1$ with the following property.
	For any closed $d_0$-dimensional Riemannian manifold $M$, any diffeomorphism $f \in \mathrm{Diff}_{\omega}^1(M)$ with $\tmop{Lip}(f), \tmop{Lip} (f^{- 1})
	\leq C$, any $\phi : M\mapsto \mathbb{R}$ continuous, any $T, N$-weak periodic point $p$ with period $T(p)$ and any constant $\delta > 0$, there exists $\rho>0$ and an $(\epsilon, V, \mathcal{O} (p))$-perturbation $g$ of $f$ in $\mathrm{Diff}_{\omega}^1(M)$, where $V:=\bigcup_{i=0}^{T(p)-1}B(f^i(p),\rho)$ and $B(f^i(p),\rho)$ is the open ball centered at $f^i(p)$ with radius $\rho >0$, such that
	\begin{itemize}
		\item for any $x$ in $\mathcal{O} (p)$, $D_xf = D_xg$ and
		
		\item $g$ contains a horseshoe $K \subset V$ such that
		
		\
		
		$P (g|_K, \phi |_K) \geq \Delta (g, p) + \frac{1}{T (p)}  \sum_{i = 0}^{T
			(p) - 1} \phi (g^i (p)) - \delta$.
	\end{itemize}

\end{theorem}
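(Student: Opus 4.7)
The plan is to adapt the horseshoe-from-weak-periodic-orbit construction of Buzzi--Crovisier--Fisher \cite{2016arXiv160601765B} to the topological-pressure setting. Their argument already yields, under the same hypotheses, a horseshoe $K\subset V$ whose topological entropy approximates $\Delta(g,p)$; the new ingredient is that the same horseshoe can be forced to shadow $\mathcal{O}(p)$ closely enough that the Birkhoff averages of $\phi$ on $K$ are controlled by the orbital average $\frac{1}{T(p)}\sum_{i=0}^{T(p)-1}\phi(g^i(p))$.

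First I would fix the parameters. Since $M$ is compact and $\phi$ is continuous, $\phi$ is uniformly continuous, so there is $\rho_0>0$ such that $|\phi(x)-\phi(y)|<\delta/3$ whenever $d(x,y)<\rho_0$. I would then take $N$ and $T$ to be the integers produced by the entropy analogue of this statement in \cite{2016arXiv160601765B} (applied with $\delta/3$ in place of $\delta$); these integers depend only on $d_0$, $C$ and $\epsilon$, as required by the conclusion.

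Next comes the horseshoe construction, which is the technical core. Applying Franks' Lemma with linearization (Theorem \ref{t.linearize}) along $\mathcal{O}(p)$ inside $V$, I would perturb $f$ so that in suitable charts the perturbed map is linear near each $f^i(p)$, while keeping $D_xg=D_xf$ at every $x\in\mathcal{O}(p)$ (in particular $\Delta(g,p)=\Delta(f,p)$). Because $\mathcal{O}(p)$ carries no $N$-dominated splitting, iterated Franks-type perturbations in the orbit neighborhood, as in \cite{2016arXiv160601765B}, produce transverse homoclinic intersections for $p$ whose return dynamics, after a Smale-type identification, give a locally maximal horseshoe $K\subset V$ with $h_{\tmop{top}}(g|_K)\geq \Delta(g,p)-\delta/3$ and with the shadowing property $g^j(K)\subset B(f^{j\bmod T(p)}(p),\rho)$ for all $j\in\mathbb{Z}$, where $\rho<\rho_0$ can be made arbitrarily small by shrinking the linearization tubes. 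The shadowing is automatic because the horseshoe lives inside thin linearization neighborhoods around $\mathcal{O}(p)$.

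The final step transfers the entropy lower bound to a pressure lower bound. By uniform continuity and shadowing, for every $x\in K$ and every $n\geq 1$,
\[\biggl|\frac{1}{n}S_n\phi(x)-\frac{1}{T(p)}\sum_{i=0}^{T(p)-1}\phi(g^i(p))\biggr|\leq \frac{\delta}{3}+\frac{2T(p)\|\phi\|_\infty}{n},\]
where the second term accounts for boundary residues when $n$ is not a multiple of $T(p)$. Integrating against any $g|_K$-invariant probability $\nu$ and letting $n\to\infty$ gives $\int\phi\,d\nu\geq \frac{1}{T(p)}\sum_{i=0}^{T(p)-1}\phi(g^i(p))-\delta/3$. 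Taking $\nu$ to be a measure of maximal entropy of the horseshoe (which exists since $g|_K$ is expansive) and invoking the variational principle,
\[P(g|_K,\phi|_K)\geq h_\nu(g|_K)+\int\phi\,d\nu\geq \Delta(g,p)+\frac{1}{T(p)}\sum_{i=0}^{T(p)-1}\phi(g^i(p))-\delta.\]
The main obstacle is the horseshoe step itself: one has to simultaneously achieve the entropy lower bound $\Delta(g,p)-\delta/3$, confine $K$ to arbitrarily small tubes around $\mathcal{O}(p)$, and, in the conservative setting, keep every Franks-type perturbation inside $\mathrm{Diff}^1_\omega(M)$ so that the local linear models remain symplectic or volume preserving. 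All of these ingredients are in place in \cite{2016arXiv160601765B}, but the bookkeeping needed to respect the orbit-wise constraint $D_xg=D_xf$ while tightening the localization in $V$ is the delicate point.
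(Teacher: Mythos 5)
Your proposal is correct and follows the same overall strategy as the paper: both take the horseshoe $K\subset V$ with $h_{\tmop{top}}(g|_K)\geq\Delta(g,p)$ directly from the entropy theorem of \cite{2016arXiv160601765B} (so your construction paragraph is citation-level, exactly as in the paper's proof), choose $\rho$ so small, via uniform continuity of $\phi$, that every orbit in $K$ cyclically shadows $\mathcal{O}(p)$ at a scale where $\phi$ oscillates by less than a fixed fraction of $\delta$, and then convert the entropy bound into a pressure bound. Where you differ is only in that last conversion: the paper lower-bounds $Q_n(g|_K,\phi,\epsilon)$ directly through $(n,\epsilon)$-spanning sets, weighting the points of a spanning set according to which ball $B(g^i(p),\rho)$ they lie in and using concavity of $\log$, whereas you integrate the Birkhoff-average estimate against a measure of maximal entropy $\nu$ of the horseshoe (which exists by expansiveness of hyperbolic sets) and invoke the variational principle $P(g|_K,\phi|_K)\geq h_\nu(g|_K)+\int\phi\,d\nu$. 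Your route is shorter and avoids the paper's inf/sup bookkeeping, at the harmless cost of quoting existence of a measure of maximal entropy; and since the quoted entropy bound has no $\delta$-loss, your $\delta/3$ budget closes with room to spare. One small repair: the inclusion $g^j(K)\subset B(f^{j\bmod T(p)}(p),\rho)$ cannot hold literally, since $g^j(K)=K$ meets every ball; what you need, and what the construction gives, is the pointwise statement that $x\in K\cap B(f^i(p),\rho)$ implies $g^j(x)\in B(f^{(i+j)\bmod T(p)}(p),\rho)$, which follows from $K\subset V$, the disjointness of the balls, the fact that $g$ fixes $\mathcal{O}(p)$, and the Lipschitz bound once $\rho$ is shrunk; with that correction your displayed Birkhoff estimate and everything after it stands.
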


\begin{remark}
	\textup{The perturbation used in this theorem is the same as in the main theorem of \cite{2016arXiv160601765B}. The idea is that we can create a horseshoe in $V$ for arbitrary small $\rho >0$. This will make the variation of $\phi$ (that is $\sup \phi -\inf \phi$) near each point of the periodic orbit as small as we want. Therefore, the potential function $\phi$ will be almost locally constant, and we can use the main theorem of \cite{2016arXiv160601765B} to get the lower bound. }
\end{remark}

\begin{proof}
  Take $\rho$ to be small enough such that $B(f^i(p),\rho) \cap B(f^j(p),\rho)=\emptyset$ for any $i\neq j$ and $\sup_{x,y \in B(f^i(p),\rho)} | \phi (x)-\phi(y)| < \delta$ for each $i$.
  We will use a parallel result for topological entropy in ~\cite[Theorem 4.1]{2016arXiv160601765B}. In \cite{2016arXiv160601765B}, it is shown that there exists an $(\epsilon, V, \mathcal{O} (p))$-perturbation $g$
  satisfying item 1 of our theorem and a horseshoe $K\subset V$ of $g$ such that
  \begin{equation*}
  	 h_{top} (g|_K) \geq \Delta (g, p).
  \end{equation*}
    We will prove that this $g$ works for our
  theorem.
  
  Let $E_{n, \epsilon}$ be a $(n, \epsilon)$-spanning set for $K$, then we
  have,
  \[ \sum_{x \in E_{n, \epsilon}} e^{S_n \phi (x)} \geq \sum_{i = 0}^{T (p)
     - 1} m_i e^{S_n \phi (g^i (p)) - n \delta}, \]
  where $m_i$ is the number of points in $E_{n, \epsilon}\cap B(g^i(p),\rho)$. Then
  \begin{eqnarray*}
    \sum_{i = 0}^{T (p) - 1} m_i e^{S_n \phi (g^i (p)) - n \delta}  &=& 
    \#E_{n, \epsilon}  \sum_{i = 0}^{T (p) - 1} \frac{m_i}{\#E_{n, \epsilon}}
    e^{S_n \phi (g^i (p)) - n \delta}\\
    \inf_{E_{n, \epsilon}}   \sum_{i = 0}^{T (p) - 1} m_i e^{S_n \phi (g^i
    (p)) - n \delta} &\geq& \inf_{E_{n, \epsilon}} \#E_{n, \epsilon} \times
    \inf_{E_{n, \epsilon}}  \sum_{i = 0}^{T (p) - 1} \frac{m_i}{\#E_{n,
    \epsilon}} e^{S_n \phi (g^i (p)) - n \delta}\\
    \frac{1}{n} \log \inf_{E_{n, \epsilon}}  \sum_{i = 0}^{T (p) - 1} m_i e^{
    S_n \phi (g^i (p)) - n \delta} &\geq& \frac{1}{n} \log \inf_{E_{n,
    \epsilon}} \#E_{n, \epsilon} + \frac{1}{n} \log \inf_{E_{n, \epsilon}} 
    \sum_{i = 0}^{T (p) - 1} \frac{m_i}{\#E_{n, \epsilon}} e^{S_n \phi (g^i
    (p)) - n \delta}. 
  \end{eqnarray*}
  Since $\log (\cdummy)$ is increasing, we have
  \[ \log \inf_{E_{n, \epsilon}}  \sum_{i = 0}^{T (p) - 1} \frac{m_i}{\#E_{n,
     \epsilon}} e^{S_n \phi (g^i (p)) - n \delta} = \inf_{E_{n, \epsilon}}
     \log \sum_{i = 0}^{T (p) - 1} \frac{m_i}{\#E_{n, \epsilon}} e^{S_n
     \phi (g^i (p)) - n \delta}. \]
  By the concavity of $\log (\cdummy)$,
  \[ \log \sum_{i = 0}^{T (p) - 1} \frac{m_i}{\#E_{n, \epsilon}} e^{S_n
     \phi (g^i (p)) - n \delta} \geq \sum_{i = 0}^{T (p) - 1}
     \frac{m_i}{\#E_{n, \epsilon}} S_n \phi (g^i (p)) - n \delta. \]
  Notice that, given any $\alpha >0$, there is $N\in \mathbb{N}$, such that for any $n>N$, we have,
  \[ \left| \frac{1}{n} S_n \phi (g^i (p)) - \frac{1}{T (p)} 
  \sum_{j = 0}^{T (p) - 1} \phi (g^j (p)) \right| <\frac{\alpha}{2} \]
  and we can find $F_{n,\epsilon}$, a $(n, \epsilon)$-spanning set for $K$ such that,
  \[ \left| \sum_{i = 0}^{T (p) - 1} \frac{m_i}{\#F_{n,
  		\epsilon}}  \frac{1}{n} S_n \phi (g^i (p)) -\inf_{E_{n, \epsilon}}\sum_{i = 0}^{T (p) - 1} \frac{m_i}{\#E_{n,
  		\epsilon}}  \frac{1}{n} S_n \phi (g^i (p))\right| < \frac{\alpha}{2}. \]
  	Hence,
  \begin{eqnarray*}
     &\ &\left| \inf_{E_{n, \epsilon}}\sum_{i = 0}^{T (p) - 1} \frac{m_i}{\#E_{n,\epsilon}}  \frac{1}{n} S_n \phi (g^i (p)) - \frac{1}{T (p)} \sum_{j = 0}^{T (p) - 1} \phi (g^j (p)) \right| \\
   &<& \frac{\alpha}{2}+\left| \sum_{i = 0}^{T (p) - 1} \frac{m_i}{\#F_{n,\epsilon}}  \frac{1}{n} S_n \phi (g^i (p)) - \frac{1}{T (p)} \sum_{j = 0}^{T (p) - 1} \phi (g^j (p)) \right| \\
   &\leq&  \frac{\alpha}{2}+ \left| \sum_{i = 0}^{T (p) - 1} \frac{m_i}{\#F_{n,\epsilon}} \left(  \frac{1}{n} S_n \phi (g^i (p)) - \frac{1}{T (p)} \sum_{j = 0}^{T (p) - 1} \phi (g^j (p))\right) \right| \\
  &\leq& \frac{\alpha}{2}+ \sum_{i = 0}^{T (p) - 1} \frac{m_i}{\#F_{n,\epsilon}}  \left| \frac{1}{n} S_n \phi (g^i (p)) - \frac{1}{T (p)} \sum_{j = 0}^{T (p) - 1} \phi (g^j (p)) \right| \\
  &<& \alpha.
  \end{eqnarray*}

  That is,
  \[\lim_{n \rightarrow \infty } \inf_{E_{n, \epsilon}}\sum_{i = 0}^{T (p) - 1} \frac{m_i}{\#E_{n,\epsilon}}  \frac{1}{n} S_n \phi (g^i (p))  = \frac{1}{T (p)} \sum_{j = 0}^{T (p) - 1} \phi (g^j (p)).\]
  Finally we have,
  \begin{eqnarray*}
    P (g|_K, \phi |_K) & = & \lim_{\epsilon \rightarrow 0^+} \liminf_{n
    \rightarrow \infty}  \frac{1}{n} \log \inf_{E_{n, \epsilon}}  \sum_{x
    \in E_{n, \epsilon}} e^{S_n \phi (x)} \\
    & \geq & h_{top} (g|_K) + \frac{1}{T (p)}  \sum_{i = 0}^{T (p) - 1} \phi
    (g^i (p)) - \delta \\
    & \geq & \Delta (g, p) + \frac{1}{T (p)}  \sum_{i = 0}^{T (p) - 1} \phi
    (g^i (p)) - \delta.
  \end{eqnarray*}
\end{proof}

\section{Formulas for topological pressure}

\

In this section, we prove three formulas for topological pressure. This enable us to represent topological pressure using some information about periodic orbits, horseshoes and Lyapunov exponents. With these formulas, we will be able to prove the non-existence of equilibrium states with positive entropy for $C^1$-generic conservative diffeomorphism.
\subsection{Periodic orbits and horseshoes}\label{PH}

\ 

By Ruelle's inequality \cite{Ruelle1978}, we have for any ergodic measure $\mu$ that
\[ h_{\mu} (f) \leq \sum_{i = 1}^{d_0} \lambda_i^+ (\mu) . \]
We can apply this to $f^{- 1}$ to get
\[ h_{\mu} (f^{- 1}) \leq \sum_{i = 1}^{d_0} \lambda_i^- (\mu) . \]
Since $h_{\mu} (f) = h_{\mu} (f^{- 1})$, we have
\[ h_{\mu} (f) \leq \min \left( \sum_{i = 1}^{d_0} \lambda_i^+ (\mu), \sum_{i
   = 1}^{d_0} \lambda_i^- (\mu) \right) =: \Delta (f, \mu). \]
We will only consider the case of conservative diffeomorphisms. In this case, 
\begin{equation*}
	\sum_{i = 1}^{d_0} \lambda_i^+ (\mu)= \sum_{i= 1}^{d_0} \lambda_i^- (\mu)= \Delta (f, \mu).
\end{equation*}

By the Variational Principle
\[ \left. P (f, \phi) = \sup_{\mu \in \mathbb{P}_{\tmop{erg}} (f)} \left( h_{\mu}(f) + \int \phi d \mu \right) \leq \sup_{\mu \in \mathbb{P}_{\tmop{erg}}(f)} \left( \Delta (f, \mu) + \int \phi d \mu \right) \right. .\]
Using Corollary \ref{c.ergodic-closing}, we have the following, 
\begin{lemma}\label{one}
	For a $C^1$-generic conservative diffeomorphism $f$,
	\[ P (f, \phi) \leq \sup_{p \in \tmop{Per} (f)} \left( \Delta (f, p) +
	\frac{1}{T (p)}  \sum_{i = 0}^{T (p) - 1} \phi (f^i (p)) \right) . \]
\end{lemma}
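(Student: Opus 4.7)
The plan is to combine the two inequalities already established in the excerpt, namely the bound $P(f,\phi)\le \sup_{\mu\in\mathbb{P}_{\tmop{erg}}(f)}(\Delta(f,\mu)+\int\phi\,d\mu)$ coming from Ruelle's inequality plus the variational principle, with the periodic approximation provided by Corollary \ref{c.ergodic-closing}. Concretely, it suffices to show that for a $C^1$-generic $f$ and any ergodic invariant measure $\mu$, the quantity $\Delta(f,\mu)+\int\phi\,d\mu$ is no larger than $\sup_{p\in\tmop{Per}(f)}\bigl(\Delta(f,p)+\frac{1}{T(p)}\sum_{i=0}^{T(p)-1}\phi(f^i(p))\bigr)$, after which taking the supremum over $\mu$ finishes the proof.

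First I would fix a generic $f$ in the residual set supplied by Corollary \ref{c.ergodic-closing} and pick any $\mu\in\mathbb{P}_{\tmop{erg}}(f)$. The corollary yields periodic orbits $O_n=\mathcal{O}(p_n)$ such that the normalized invariant measures $\mu_{O_n}=\frac{1}{T(p_n)}\sum_{i=0}^{T(p_n)-1}\delta_{f^i(p_n)}$ converge to $\mu$ in the vague topology and such that the Lyapunov spectrum of $O_n$ converges to that of $\mu$. Vague convergence applied to the continuous test function $\phi$ gives $\frac{1}{T(p_n)}\sum_{i=0}^{T(p_n)-1}\phi(f^i(p_n))\to\int\phi\,d\mu$, while convergence of the ordered Lyapunov exponents together with continuity of the map $(\lambda_1,\dots,\lambda_{d_0})\mapsto\min\bigl(\sum\lambda_i^+,\sum\lambda_i^-\bigr)$ gives $\Delta(f,p_n)\to\Delta(f,\mu)$. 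Summing the two limits shows
\[
\Delta(f,\mu)+\int\phi\,d\mu=\lim_{n\to\infty}\Bigl(\Delta(f,p_n)+\tfrac{1}{T(p_n)}\sum_{i=0}^{T(p_n)-1}\phi(f^i(p_n))\Bigr)\le \sup_{p\in\tmop{Per}(f)}\Bigl(\Delta(f,p)+\tfrac{1}{T(p)}\sum_{i=0}^{T(p)-1}\phi(f^i(p))\Bigr).
\]

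Taking the supremum over $\mu\in\mathbb{P}_{\tmop{erg}}(f)$ and appealing to the bound $P(f,\phi)\le\sup_\mu(\Delta(f,\mu)+\int\phi\,d\mu)$ recalled just above the lemma completes the argument. The only nontrivial point is the joint convergence of the Birkhoff averages of $\phi$ along the $O_n$ and of the functionals $\Delta(f,\cdot)$; the first follows directly from vague convergence because $\phi$ is continuous, and the second is exactly the second conclusion of Corollary \ref{c.ergodic-closing}, so no significant technical obstacle remains beyond citing that corollary correctly.
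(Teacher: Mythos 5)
Your proposal is correct and follows essentially the same route as the paper, which obtains the bound $P(f,\phi)\le\sup_{\mu\in\mathbb{P}_{\tmop{erg}}(f)}(\Delta(f,\mu)+\int\phi\,d\mu)$ from Ruelle's inequality applied to $f$ and $f^{-1}$ together with the variational principle, and then invokes Corollary \ref{c.ergodic-closing} to pass from ergodic measures to periodic orbits. In fact you spell out the approximation step (vague convergence handling the Birkhoff averages of $\phi$, convergence of the Lyapunov spectra handling $\Delta$) in more detail than the paper does.
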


We will use the following notation,
\[ \Delta_{\phi}(f,p):= \Delta (f, p) + \frac{1}{T (p)}  \sum_{i = 0}^{T (p) - 1} \phi (f^i (p)) \]
for a periodic orbit $p$.

\begin{lemma}\label{ContinuitPoints}
  For any fixed continuous function $\phi:M\mapsto \mathbb{R}$, the map defined by $f \mapsto \Delta_{\phi} (f):=\sup_{p \in \tmop{Per} (f)}  \Delta_{\phi}(f,p)$ from
  $\tmop{Diff}^1_{\omega} (M)$ to $\mathbb{R}$ has a dense $G_{\delta}$ set of
  continuity points.
\end{lemma}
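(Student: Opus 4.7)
The plan is to show that $f \mapsto \Delta_{\phi}(f)$ is lower semi-continuous on $\mathrm{Diff}^1_\omega(M)$; the conclusion then follows from the classical fact that a semi-continuous real-valued function on a Baire space has a residual (hence dense $G_\delta$) subset of continuity points, together with the Baire property of $\mathrm{Diff}^1_\omega(M)$ in the $C^1$-topology (it is a completely metrizable space).

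To verify lower semi-continuity at a fixed $f$, I would fix $\epsilon > 0$ and choose $p \in \mathrm{Per}(f)$ with $\Delta_\phi(f,p) > \Delta_\phi(f) - \epsilon/3$. The goal is to extract from $p$ a single periodic orbit whose contribution to $\Delta_\phi$ remains close to $\Delta_\phi(f,p)$ for every $g$ in some $C^1$-neighborhood of $f$. Since $p$ may fail to be hyperbolic for $f$, I would first apply Franks' lemma with linearization (Theorem \ref{t.linearize}) to obtain an arbitrarily small conservative perturbation $\tilde{f}$ supported near $\mathcal{O}(p)$ such that $\mathcal{O}(p)$ is preserved as a periodic orbit of $\tilde{f}$, and $D\tilde{f}^{T(p)}|_p$ is hyperbolic with eigenvalues arbitrarily close to those of $Df^{T(p)}|_p$. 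Combined with the continuity of $\phi$, this ensures $\Delta_\phi(\tilde f, p) > \Delta_\phi(f) - 2\epsilon/3$.

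Hyperbolicity of $p$ for $\tilde{f}$ gives, via the implicit function theorem, an open $C^1$-neighborhood $\mathcal{U}$ of $\tilde{f}$ on which $p$ admits a unique continuation $p_g$ depending continuously on $g$, with the eigenvalues of $Dg^{T(p)}|_{p_g}$ varying continuously as well. Hence $g \mapsto \Delta_\phi(g, p_g)$ is continuous on $\mathcal{U}$, and shrinking $\mathcal{U}$ I can enforce $\Delta_\phi(g, p_g) > \Delta_\phi(f) - \epsilon$ throughout $\mathcal{U}$. By taking the Franks perturbation small enough compared to the radius of $\mathcal{U}$, I can ensure $\mathcal{U}$ contains a $C^1$-neighborhood of $f$; on this neighborhood one has $\Delta_\phi(g) \geq \Delta_\phi(g,p_g) > \Delta_\phi(f) - \epsilon$, giving lower semi-continuity at $f$.

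The technical issue I expect to wrestle with is the calibration between the perturbation size $d_{C^1}(\tilde f, f)$ and the continuation radius of $\mathcal{U}$: a weaker hyperbolicity at $p$ shrinks $\mathcal{U}$, so one must check that $\tilde f$ can be placed well inside $\mathcal{U}$ rather than on its boundary. The freedom in Theorem \ref{t.linearize} to prescribe the target linear map $A_p$ is what resolves this: one chooses the eigenvalues of $A_p$ to lie a definite distance off the unit circle so as to create a prescribed amount of hyperbolicity, while the actual $C^1$-norm of the perturbation is controlled by dividing the desired change of $D\tilde{f}^{T(p)}|_p$ by the chain-rule amplification factor across the fixed period $T(p)$ (a finite constant depending only on $f$ and $p$). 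Once lower semi-continuity is established, the dense $G_\delta$ set of continuity points is produced by the standard Baire category argument.
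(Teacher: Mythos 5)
Your overall strategy is to prove that $f \mapsto \Delta_{\phi}(f)$ is lower semi-continuous at \emph{every} $f$ and then invoke the Baire-category fact for semi-continuous functions. The gap is in the calibration step you yourself flag: you need the continuation neighborhood $\mathcal{U}$ of the Franks-perturbed map $\tilde f$ to contain a full $C^1$-neighborhood of the original $f$, and the proposed fix does not deliver this. Franks' lemma only lets you move each $D f(x)$, $x\in\mathcal{O}(p)$, by less than $\epsilon/2$, so along an orbit of fixed period $T(p)$ the moduli of the eigenvalues of the return map can only be pushed off the unit circle by a factor of order $e^{T(p)\epsilon}$; creating ``a prescribed amount of hyperbolicity'' therefore forces $\epsilon\gtrsim c/T(p)$, which for a fixed orbit (worst case: a fixed point whose derivative has eigenvalue $1$) is bounded away from $0$ and cannot be made arbitrarily small. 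Conversely, if the perturbation size $\delta=d_{C^1}(f,\tilde f)$ is small, the hyperbolicity gained is $O(\delta)$, so the radius of the continuation neighborhood $\mathcal{U}$ (governed by $\|(\mathrm{Id}-D\tilde f^{\,T(p)}(p))^{-1}\|$ and by Lipschitz constants over the period) is also $O(\delta)$, with a constant that there is no reason to believe exceeds $1$. Hence $\mathcal{U}$ need not contain $f$, let alone a neighborhood of $f$, and lower semi-continuity at $f$ is not established. Indeed global lower semi-continuity is dubious: if the supremum defining $\Delta_\phi(f)$ is essentially attained only along orbits on which $Df^{T(p)}$ has eigenvalue $1$, arbitrarily small perturbations can destroy those orbits and drop the value.

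The paper avoids this entirely by not claiming semi-continuity. For each rational $\alpha$ it introduces $U_\alpha^+$ (resp. $U_\alpha^-$), the set of $f$ such that every sufficiently $C^1$-close $g$ has $\Delta_\phi(g)>\alpha$ (resp. $<\alpha$), and shows $U_\alpha^+\cup U_\alpha^-$ is open and dense: if $\Delta_\phi(f)\ge\alpha$, a Franks perturbation produces $h$ with a \emph{hyperbolic} periodic orbit satisfying $\Delta_\phi(h,p)>\alpha$, so $h\in U_\alpha^+$ and $f\in\overline{U_\alpha^+}$ — the robust neighborhood is required to cover only the perturbed map $h$, never $f$ itself; the case $\Delta_\phi(f)<\alpha$, $f\notin U_\alpha^-$ is then handled by approximating $f$ by maps with $\Delta_\phi\ge\alpha$. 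The continuity points then contain $\bigcap_{\alpha\in\mathbb{Q}}(U_\alpha^+\cup U_\alpha^-)$. Your final Baire step would be fine if lower semi-continuity held, but as written the argument needs to be restructured along these lines rather than repaired by tuning the perturbation size.
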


\begin{proof}
  For each $\alpha \in \mathbb{Q}$, let $U_{\alpha}^+$ (resp. $U_{\alpha}^-$) be the set of diffeomorphisms $f$ such that for any $ g$ sufficiently $C^1$ close to $f$, we have $\Delta_{\phi} (g)>a$ (resp. $\Delta_{\phi} (g)<a$). 
  
  \ 
  
  For any $f$ with $ \Delta_{\phi} (f)\geq \alpha$, let that $p$ is a periodic point of $f$ such that 
  \begin{equation*}
  	 \Delta_{\phi} (f)- \Delta_{\phi}(f,p)< \epsilon/2.
  \end{equation*}
   By Franks' Lemma (Theorem \ref{t.linearize}), we can make a perturbation $h$ of $f$ around the orbit of $p$ to make $ \Delta_{\phi}(h,p)> \alpha $. By taking $\epsilon$ arbitrarily small, $h$ can be chosen to be arbitrarily close to $f$ in the $C^1$ topology. $p$ can be assumed to be a hyperbolic periodic point for $h$. The structural stability of hyperbolic periodic orbits and the $C^1$-smoothness of $h$ implies that $h$ is in $U_{\alpha}^+$. Therefore, $f$ is in $\overline{U_{\alpha}^+}$. 
  
  \ 
  
  For those $f$ such that $\Delta_{\phi}(f)<\alpha$ and $f$ is not in $U_{\alpha}^-$, we can find a sequence of diffeomorphisms $h_n\in \tmop{Diff}^1_{\omega} (M)$ converging to $f$ such that $\Delta_{\phi}(h_n)\geq \alpha$ by the definition of $U_{\alpha}^-$. We know that $h_n \in \overline{U_{\alpha}^+}$. Hence, $f=\lim_{n\rightarrow \infty}h_n \in \overline{U_{\alpha}^+}$. This proves that for any $f\in \tmop{Diff}^1_{\omega} (M)$, $f\in \overline{U_{\alpha}^+ \cup U_{\alpha}^-}$. 
  
  \ 
  
  So $U_{\alpha}:=U_{\alpha}^+ \cup U_{\alpha}^-$ is open and dense in $\mathcal{E}_{\omega} (M)$. If $f\in \bigcap _{\alpha \in \mathbb{Q}}U_{\alpha}$, $  \alpha_1,\  \alpha_2 \in \mathbb{Q}$, such that $\alpha_1<\Delta_{\phi}(f)<\alpha_2$, then we must have $f\in U_{\alpha_1}^+\cap U_{\alpha_2}^-$. Hence, for any $ g$ sufficiently $C^1$ close to $f$, we have $\alpha_1<\Delta_{\phi}(g)<\alpha_2$. This proves that $f$ is a continuity point for $\Delta_{\phi}(f)$. Hence the generic set $\bigcap _{\alpha \in \mathbb{Q}}U_{\alpha}$ is the set of continuity points of $\Delta_{\phi}(f)$.
  
\end{proof}

\begin{lemma}\label{pressure}
  For any continuous function $\phi:M\mapsto \mathbb{R}$, there exists a residual subset $\mathcal{G}$ of $\mathcal{E}_{\omega} (M)$ such that for any $f\in \mathcal{G}$ we have
  \[ \Delta_{\phi} (f)\leq \sup_{\tmop{Horseshoe}\ K} P (f|_K, \phi |_K).\]
\end{lemma}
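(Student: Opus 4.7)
The plan is to use Theorem \ref{main} to produce, after an arbitrarily small $C^1$-perturbation, horseshoes whose pressure realizes $\Delta_\phi$ up to an arbitrarily small error, and then to transfer this lower bound from the perturbation back to $f$ by using continuity on a residual set.

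First I would introduce two residual subsets of $\mathcal{E}_\omega(M)$. Let $\mathcal{G}_1$ be the residual set of continuity points of $f\mapsto\Delta_\phi(f)$ furnished by Lemma \ref{ContinuitPoints}. Define $S(f):=\sup_K P(f|_K,\phi|_K)$; since every horseshoe of $f$ is a locally maximal hyperbolic set and hence persists in a $C^1$-neighborhood with continuously varying pressure on the continuation, the map $S$ is lower semi-continuous on $\mathcal{E}_\omega(M)$, and the Baire-category argument recalled just before Theorem \ref{continuity} yields a residual set $\mathcal{G}_2$ of continuity points of $S$. Set $\mathcal{G}:=\mathcal{G}_1\cap\mathcal{G}_2$, which is still residual.

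Now fix $f\in\mathcal{G}$ and $\delta>0$. Pick a periodic orbit $p$ of $f$ with $\Delta_\phi(f,p)>\Delta_\phi(f)-\delta/4$, and let $N,T$ be the integers produced by Theorem \ref{main} for the data $(\delta/4,\mathrm{Lip}(f),\mathrm{Lip}(f^{-1}),d_0)$. Since $f$ lies in the \emph{interior} of the set of diffeomorphisms without dominated splitting on $M$, every sufficiently small $C^1$-perturbation of $f$ still has no dominated splitting on $M$. Combining this robust absence of global domination with Franks' lemma (Theorem \ref{t.linearize}), Proposition \ref{p.control-exponents}, and the ergodic closing lemma (Theorem \ref{t.ergodic-closing}), one produces an arbitrarily small perturbation $f_1\in\mathcal{E}_\omega(M)$ carrying a $(T,N)$-weak periodic orbit $p_1$ with $\Delta_\phi(f_1,p_1)>\Delta_\phi(f)-\delta/2$; the bound on the loss uses $f\in\mathcal{G}_1$. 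Apply Theorem \ref{main} to $(f_1,p_1)$ with error $\delta/4$: the resulting perturbation $g$ coincides with $f_1$ both as a map and in its derivative on $\mathcal{O}(p_1)$, so
\[P(g|_K,\phi|_K)\;\geq\;\Delta_\phi(g,p_1)-\delta/4\;=\;\Delta_\phi(f_1,p_1)-\delta/4\;>\;\Delta_\phi(f)-\tfrac{3\delta}{4}.\]
Taking the combined perturbation small enough, $g$ can be made arbitrarily $C^1$-close to $f$; since $f\in\mathcal{G}_2$, continuity of $S$ at $f$ yields $S(f)\geq S(g)-\delta/4\geq P(g|_K,\phi|_K)-\delta/4>\Delta_\phi(f)-\delta$. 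Letting $\delta\to 0$ gives the desired inequality.

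The main obstacle is the middle step: producing a $(T,N)$-weak periodic orbit $p_1$ whose $\Delta_\phi$-value remains close to $\Delta_\phi(f)$. The period requirement $T(p_1)\geq T$ can be met by closing a long orbit segment coming from Corollary \ref{c.ergodic-closing}, but if $\mathcal{O}(p)$ already carries an $N$-dominated splitting one must destroy that domination by Franks-type rotations along the orbit while perturbing Lyapunov exponents and the Birkhoff sum of $\phi$ only slightly, and all of this must respect the conservative structure. The robust absence of a dominated splitting on $M$ throughout $\mathcal{E}_\omega(M)$, together with the Bochi–Mañé-type perturbation machinery recalled in Section 2, is what ultimately makes this construction possible; the unavoidable loss in $\Delta_\phi$ incurred along the way is absorbed using the continuity of $\Delta_\phi$ at $f$.
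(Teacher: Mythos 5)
There is a genuine gap at what you yourself call the middle step, and it is the heart of the lemma. You assert that, by combining Franks' lemma, Proposition \ref{p.control-exponents} and the ergodic closing lemma, one can produce an arbitrarily small perturbation $f_1$ with a $T,N$-weak periodic orbit $p_1$ satisfying $\Delta_\phi(f_1,p_1)>\Delta_\phi(f)-\delta/2$, but you give no argument, and the mechanism you sketch (closing a long orbit and then destroying any $N$-dominated splitting of $\mathcal{O}(p)$ by Franks-type rotations while barely moving the exponents and the Birkhoff sum of $\phi$) does not work in general: an $N$-dominated splitting along a given periodic orbit is a robust property of the derivative cocycle, so it cannot be removed by the small perturbations Franks' lemma allows, and the Bochi--Ma\~n\'e machinery that does remove domination does so precisely by mixing the expansion rates, which destroys the quantity $\Delta(f,p)$ you need to preserve. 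Note also that continuity of $\Delta_\phi$ at $f$ only compares $\Delta_\phi(f_1)$ with $\Delta_\phi(f)$; it does not tell you that your specific weak orbit $p_1$ nearly attains the supremum, which is exactly what is being claimed.

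The paper resolves this step without any rotation or preliminary perturbation: using the generic facts that some saddle has homoclinic class $H(O)=M$ and that, inside a homoclinic class, a periodic orbit can be replaced (Proposition \ref{p.control-exponents}) by one that is $\epsilon$-dense with almost the same Lyapunov exponents (and Birkhoff average), one obtains for $f$ itself a periodic orbit $\mathcal{O}(p)$ that is $\epsilon$-dense in $M$ and still nearly attains $\Delta_\phi(f)$. Because $f$ lies in $\mathcal{E}_\omega(M)$, the whole manifold carries no dominated splitting, and since dominated splittings pass to closures/limits, a sufficiently dense orbit cannot carry an $N$-dominated splitting and automatically has period $\geq T$; hence $p$ is $T,N$-weak for $f$ and Theorem \ref{main} applies directly. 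Your outer framework (continuity points of $\Delta_\phi$ plus continuity points of the lower semi-continuous map $S(f)=\sup_K P(f|_K,\phi|_K)$, instead of the paper's open-dense sets $V_{f,\epsilon}$) is a reasonable alternative transfer argument, but without the above replacement mechanism the construction of the weak orbit with controlled $\Delta_\phi$ is missing, so the proof as proposed is incomplete.
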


\begin{proof}
  Let $\mathcal{G}_1$ be the set of continuity points of $\Delta_{\phi}$ over $\mathcal{E}_{\omega} (M)$. 
  
  \ 
  
  Let $\mathcal{G}_2$ be the set of diffeomorphisms $f$ admitting a hyperbolic periodic orbit whose homoclinic class $H(O)$ is the whole manifold $M$. From \cite{BONATTI2003839,arnaud_bonatti_crovisier_2005} this set contains a generic subset of $\mathcal{E}_{\omega} (M)$. 
  
  \ 
  
  Consider $\mathcal{G}_3$ the set of all diffeomorphisms $f$ such that for any hyperbolic periodic orbit $O$ of $f$ and any $\epsilon>0$, there exists a periodic orbit $O^{\prime}$ which is $\epsilon$-dense in $H(O)$ and whose collection of Lyapunov exponents is $\epsilon$-close to the Lyapunov exponents of $O$. Hyperbolic periodic points are dense in $H(O)$ by definition. We can then pick finitely many hyperbolic periodic points that are $\epsilon$-dense in $H(O)$. Then there is a horseshoe containing $O$ that is $\epsilon$-dense in $H(O)$. By Proposition \ref{p.control-exponents} we know that $\mathcal{G}_3$ is dense in $\mathcal{G}_2$. 
  
  \ 
  
  Let $f \in \mathcal{G}_1 \cap \mathcal{G}_2 \cap \mathcal{G}_3 \cap \mathcal{E}_{\omega} (M)$. This is a dense subset in $\mathcal{E}_{\omega} (M)$. We then fix a neighborhood $\mathcal{U}$ of $f$, $N,T$ as given by the main theorem. Let $\epsilon>0$ be small, we can find a periodic orbit $O$ such that 
  \begin{equation*}
  	 \Delta_{\phi}(f,O)>\Delta_{\phi}(f)-\epsilon/4.
  \end{equation*}
   Since $f \in \mathcal{G}_2 \cap \mathcal{G}_3$, we can replace $O$ by a periodic orbit $\mathcal{O}(p)$ which is $\epsilon$-dense in $H(O)=M$. This implies that $p$ has a large period, in particular we can chose $p$ such that its period is larger than $T$. Since $f \in \mathcal{E}_{\omega}(M)$, for any $N$ and invariant splitting $TM=E\oplus F$, there are $ x\in M, n \geq N, \eta>\frac{1}{2}$, such that for some unit vectors $u\in E(x), v\in F(x)$, we have the following,
  \[\Vert D_x f^{n}u \Vert > \eta \Vert D_x f^{n} v \Vert.\]
  Since $\mathcal{O}(p)$ is $\epsilon$-dense in $H(O)=M$, there is an iteration of $p$, say $f^m(p)$ which is $\epsilon$-close to $x$. For any fixed $C \in (0,1)$, we can make $\epsilon$ small enough, such that
  
  \[ C<\frac{\Vert D_{f^m(p)} f^{n}u\Vert}{\Vert D_x f^{n} u\Vert}<C^{-1}, \]
  \[ C<\frac{\Vert D_{f^m(p)} f^{n}v\Vert}{\Vert D_x f^{n} v\Vert}<C^{-1}. \]
  
  Hence,
    \[ \frac{\Vert D_{f^m(p)} f^{n}u\Vert}{\Vert D_{f^m(p)} f^{n} v\Vert}=\frac{\Vert D_{f^m(p)} f^{n}u\Vert}{\Vert D_x f^{n} u\Vert}\cdot \frac{\Vert D_x f^{n} u\Vert}{\Vert D_x f^{n} v\Vert}\cdot\frac{\Vert D_x f^{n} v\Vert}{\Vert D_{f^m(p)} f^{n} v\Vert} > C\cdot \eta \cdot C.\]
    
    We fix $C$ close to $1$ such that $\eta C^2>\frac{1}{2}.$
    
    \ 
    
   This proves that $p$ is a $T,N$-weak periodic point. By Theorem~ \ref{main}, we can find $g$ in $\mathcal{U}$ having a horseshoe $K$ such that 
   \begin{eqnarray*}
   	     P (g|_K, \phi |_K) &>& \Delta_{\phi}(f,\mathcal{O}(p))-\epsilon/4 \\
   	     &>&\Delta_{\phi}(f)-\epsilon/2. 
   \end{eqnarray*}
   
   \ 
   
   By structural stability of horseshoe, this property is open. Since $f\in \mathcal{G}_1$, we have a non-empty open set of diffeomorphisms $h$ having a horseshoe $K$ such that
   
   \begin{eqnarray*}
   	    P(h|_K,\phi|_K)&\geq& \Delta_{\phi}(f)-\epsilon/2 \\
   	                                    &>&\Delta_{\phi}(g)-\epsilon/2-\epsilon/2 \\
   	                                    &=&\Delta_{\phi}(g)-\epsilon .
   \end{eqnarray*}
   Denote this set by $V_{f,\epsilon}$. Then $\bigcup _{f\in \mathcal{G}_1 \cap \mathcal{G}_2 \cap \mathcal{G}_3 \cap \mathcal{E}_{\omega} (M)}V_{f,\epsilon}$ is open and dense in $\mathcal{E}_{\omega} (M)$. Then
   \[\bigcap_{n=1}^{\infty}\left( \bigcup _{f\in \mathcal{G}_1 \cap \mathcal{G}_2 \cap \mathcal{G}_3 \cap \mathcal{E}_{\omega} (M)}V_{f,\frac{1}{n}} \right)\]
   is a dense $G_{\delta}$ subset of $ \mathcal{E}_{\omega} (M)$ on which $\Delta_{\phi} (f)\leq \sup_K P (f|_K, \phi |_K)$.
\end{proof}

\begin{proof}[Proof of Theorem \ref{PressureFormula}]
Clearly, we have $P (f|_K, \phi |_K) \leq P (f, \phi)$ for any horseshoe $K$.
Then, Lemma \ref{one} and Lemma \ref{pressure} will immediately implies that for a given $\phi$, there is a residual subset $\mathcal{G}_{\phi}$ of $\mathcal{E}_{\omega} (M)$, such that for any $f\in \mathcal{G}_{\phi}$

\begin{eqnarray*}
	\sup_{\tmop{Horseshoe}\ K} P (f|_K, \phi |_K) &\leq& P (f, \phi) \\
	                                                                                             &\leq& \Delta_{\phi} (f) \\
	                                                                                             &\leq&\sup_{\tmop{Horseshoe}\ K} P (f|_K, \phi |_K),
\end{eqnarray*}
which implies
\begin{equation}\label{equation1}
 P (f, \phi)= \Delta_{\phi} (f)= \sup_{\tmop{Horseshoe}\ K} P (f|_K, \phi |_K).
\end{equation}
The space $C^0(M,\mathbb{R})$ of real valued continuous functions on $M$, is separable (because $M$ is compact), thus there is a countable, dense subset $\{\varphi_1,\ \varphi_2, \dots, \ \varphi_n, \dots\}$ of $C^0(M,\mathbb{R})$. Let $\mathcal{G}_n$
be the residual subset of $\mathcal{E}_{\omega} (M)$ on which equation (\ref{equation1}) holds for $\varphi_{n}$, then $\mathcal{G}=\cap_{n=1}^{\infty}\mathcal{G}_n$ is a residual subset of $\mathcal{E}_{\omega} (M)$ on which equation (\ref{equation1}) holds for all $\varphi_n$. Since $P (f, \phi)$, $\Delta_{\phi} (f)$ and $\sup_K P (f|_K, \phi |_K)$ are all continuous with respect to $\phi$ and $\{\varphi_1,\ \varphi_2, \dots, \ \varphi_n, \dots\}$ is dense in $C^0(M,\mathbb{R})$, for any $f$ in $\mathcal{G}$ and any $\phi \in C^0(M,\mathbb{R})$, equation (\ref{equation1}) holds. This proves Theorem \ref{PressureFormula}. 
\end{proof}

\subsection{Continuity of topological pressure}\label{C}

\begin{proof}[Proof of Theorem \ref{continuity}]
 By Theorem \ref{PressureFormula}, there is a residual subset $\mathcal{G}$ of $\mathcal{E}_{\omega}(M)$ such that for any $f\in \mathcal{G}$ and any $\phi$ in $C^0(M,\mathbb{R})$, $P (f, \phi) =\sup_K P (f|_K, \phi |_K)=\Delta_{\phi}(f)$. Here $K$ is a horseshoe of $f$. Let $\{\varphi_1,\ \varphi_2, \dots, \ \varphi_n, \dots\}$ be dense in $C^0(M,\mathbb{R})$. By Lemma \ref{ContinuitPoints}, there is a sequence of residual subsets $\{ \mathcal{G}_n \}_{n=1}^{\infty}$ of $\mathcal{E}_{\omega}(M)$ such that each $f\in \mathcal{G}_n$ is a continuity point of the map $g \mapsto \Delta_{\varphi_n}(g)$. Then each $f\in \mathcal{G}^{\prime}=\cap_{n=1}^{\infty} \mathcal{G}_n$ is a continuity point for the maps $g \mapsto \Delta_{\varphi_n}(g)$ for all $n$. Clearly, $\mathcal{G}^{\prime}$ is a residual subset of $\mathcal{E}_{\omega}(M)$. For any $f\in \mathcal{G}^{\prime}$ and any $\phi\in C^0(M,\mathbb{R})$, take a sequence $\{ f_n\}\subset \tmop{Diff}^1_{\omega} (M)$ such that $f_n\rightarrow f$. For any $\epsilon>0$, let $\varphi_m$ be $\epsilon /3$ close to $\phi$ in the $C^0$-topology, then for any $g\in \tmop{Diff}^1_{\omega} (M)$
 
 \begin{eqnarray*}
 \left| \Delta_{\varphi_m}(g)-\Delta_{\phi}(g)\right|&=&\left|\sup_{p \in \tmop{Per} (g)}  \Delta_{\varphi_m}(g,p)-\sup_{p \in \tmop{Per} (g)}  \Delta_{\phi}(g,p) \right| \\
 &\leq&  \sup_{p \in \tmop{Per} (g)}  \left| \Delta_{\varphi_m}(g,p)-\Delta_{\phi}(g,p) \right| \\
 &=& \sup_{p \in \tmop{Per} (g)}  \frac{1}{T(p)} \left|  \sum_{i = 0}^{T (p) - 1} (\varphi_m (g^i (p))-\phi (g^i (p))) \right| \\
 &\leq& \sup_{p \in \tmop{Per} (g)}  \frac{1}{T(p)}  \sum_{i = 0}^{T (p) - 1} \left|\varphi_m (g^i (p))-\phi (g^i (p)) \right| \\
 &<& \frac{\epsilon}{3}.
 \end{eqnarray*}
 Therefore 
 \begin{eqnarray*}
   \left| \Delta_{\phi}(f_n)-\Delta_{\phi}(f)\right| &=&\left| \Delta_{\phi}(f_n)-\Delta_{\varphi_m}(f_n)+\Delta_{\varphi_m}(f_n)-\Delta_{\varphi_m}(f)+\Delta_{\varphi_m}(f)-\Delta_{\phi}(f) \right| \\
   &\leq& \left|\Delta_{\phi}(f_n)-\Delta_{\varphi_m}(f_n)\right|+\left|\Delta_{\varphi_m}(f_n)-\Delta_{\varphi_m}(f)\right|+\left|\Delta_{\varphi_m}(f)-\Delta_{\phi}(f) \right|\\
   &<& \left|\Delta_{\varphi_m}(f_n)-\Delta_{\varphi_m}(f)\right| +\frac{2\epsilon}{3}.
 \end{eqnarray*}
Since $f\in \mathcal{G}^{\prime}$, there exists $N\in \mathbb{N}$ such that for all $n>N$,
\begin{equation*}
	\left|\Delta_{\varphi_m}(f_n)-\Delta_{\varphi_m}(f)\right|\leq \frac{\epsilon}{3}. 
\end{equation*}
Hence for all $n>N$,
\begin{equation*}
	\left| \Delta_{\phi}(f_n)-\Delta_{\phi}(f)\right|<\epsilon. 
\end{equation*}
This shows that each $f\in \mathcal{G}^{\prime}$ is a continuity point of the maps $g \mapsto \Delta_{\phi}(g)$ for any $\phi\in C^0(M,\mathbb{R})$. Then for any $f\in \mathcal{G}^{\prime\prime}:=\mathcal{G}\cap\mathcal{G}^{\prime}$ and any $\phi\in C^0(M,\mathbb{R})$, we have 
\begin{itemize}
	\item $P (f, \phi)= \Delta_{\phi} (f)= \sup_K P (f|_K, \phi |_K).$ 
	\item $f$ is a continuity point of the maps $g \mapsto \Delta_{\phi}(g)$, $g\in \tmop{Diff}^1_{\omega} (M)$.
\end{itemize}
By the structural stability of horseshoe and the continuity of the potential function, each $f\in \mathcal{G}^{\prime\prime}$ is a lower semi-continuity point for the map $f: \mapsto P (f, \phi) $ for each fixed $\phi$. 

\ 

For any $\alpha>0$, any $f\in \mathcal{G}^{\prime\prime}$, any $\widetilde{f}\in \tmop{Diff}^1_{\omega} (M)$ $C^1$-close to $f$ and $\mu$ be an ergodic measure of $\widetilde{f}$ such that 
\begin{equation*}
	h_{\mu}(\widetilde{f}) + \int \phi d \mu >P(\widetilde{f},\phi)-\frac{\alpha}{4}. 
\end{equation*}

We take $\widetilde{f}$ to be so close to $f$ such that 
\begin{equation*}
	\Delta_{\phi}(f)+\frac{\alpha}{4} > \Delta_{\phi}(\widetilde{f})
\end{equation*}
 By Theorem \ref{t.ergodic-closing}, there is a $C^1$-perturbation $\widehat{f} \in \tmop{Diff}^1_{\omega} (M)$ of $\widetilde{f}$ and a hyperbolic periodic point $p$ of $\widehat{f}$ such that 
 \begin{equation*}
 	\Delta_{\phi}(f)+\frac{\alpha}{2} > \Delta_{\phi}(\widehat{f})
 \end{equation*}
  and 
  \begin{equation*}
  	 \Delta (\widehat{f}, p) +
  	 \frac{1}{T (p)}  \sum_{i = 0}^{T (p) - 1} \phi (\widehat{f}^i (p))>\Delta(\widetilde{f},\mu) + \int \phi d \mu-\frac{\alpha}{4}.
  \end{equation*}
   Then we have 
\begin{eqnarray*}
P(f,\phi)&=&\Delta_{\phi}(f) \\
            &>&\Delta_{\phi}(\widehat{f})-\frac{\alpha}{2} \\
            &\geq& \Delta (\widehat{f}, p) + \frac{1}{T (p)}  \sum_{i = 0}^{T (p) - 1} \phi (\widehat{f}^i (p))-\frac{\alpha}{2} \\
            &>& \Delta(\widetilde{f},\mu) + \int \phi d \mu-\frac{3\alpha}{4} \\
            &\geq& h_{\mu}(\widetilde{f})+ \int \phi d \mu-\frac{3\alpha}{4} \\
            &>& P(\widetilde{f},\phi)-\alpha.
\end{eqnarray*}
Notice that Rulle's inequality $h_{\mu}(\widetilde{f})\leq \Delta(\widetilde{f},\mu)$ was used to obtain the implication from the fourth to the fifth inequality
. This shows that each $f\in \mathcal{G}$ is a upper semi-continuity point for the map $f: \mapsto P (f, \phi) $ for each fixed $\phi$. Upper semi-continuity and lower semi-continuity together imply continuity and Theorem \ref{continuity} is proved. 
\end{proof}

\subsection{Another formula for topological pressure}\label{Jac}
\ 

Recall that for any $C^1$-diffeomorphism $f$ of a compact, $d_0$ dimensional, Riemannian manifold $M$ and for each integer $1\leq k\leq d_0$, 
\[\sigma_k(f,\phi):=\lim_{n \rightarrow \infty} \sup_{E\in \tmop{Grass}_k(TM)}\frac{1}{n}\left( \log \vert \tmop{Jac}(f^n,E)\vert + S_n \phi(\pi_k(E))\right),\]
where $\pi_k(E)$ is the base-point of $E$ and $S_n(\phi)$ is the Birkhoff sum of $\phi$.

\begin{lemma} \label{JacLemma}For $f\in \tmop{Diff}^1_{\omega}(M)$, we have the following,
    \begin{enumerate}
    	\item The limit defining $\sigma_k(f,\phi)$ exists and 
    \[ \sigma_k(f,\phi)= \inf_{n\geq 1}\sup_{E\in \tmop{Grass}_k(TM)}\frac{1}{n}\left( \log \vert \tmop{Jac}(f^n,E)\vert + S_n \phi(\pi_k(E))\right). \]
    	\item For each $k$, the map $f \mapsto  \sigma_k(f,\phi)$ is upper semi-continuous in the $C^1$ topology. More precisely, for any $f$ and $\alpha>0$, there exists $N_1$, such that for every $1\leq k \leq d_0$ and $g$ $C^1$-close to $f$ we have,
    	\[\forall n \geq N_1, \forall E\in \tmop{Grass}_k(TM), \log \vert \tmop{Jac} (g^n,E)\vert +S_n \phi(\pi_k(E)) \leq (\sigma_k(f,\phi)+\alpha)n\]
    \end{enumerate}
\end{lemma}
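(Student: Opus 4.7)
The plan is to handle parts (1) and (2) via a single subadditivity argument and then boostrap continuity of $a_N$ in the $C^1$-topology.

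For part (1), define $a_n(f):= \sup_{E\in \tmop{Grass}_k(TM)}\bigl(\log\lvert\tmop{Jac}(f^n,E)\rvert + S_n\phi(\pi_k(E))\bigr)$, and plan to show that $(a_n(f))_{n\geq 1}$ is subadditive. Given $E\in\tmop{Grass}_k(TM)$ at $x=\pi_k(E)$, the chain rule gives
\[
\log\lvert\tmop{Jac}(f^{n+m},E)\rvert = \log\lvert\tmop{Jac}(f^n,E)\rvert + \log\lvert\tmop{Jac}(f^m,Df^n(E))\rvert,
\]
while the cocycle identity for Birkhoff sums gives $S_{n+m}\phi(x)=S_n\phi(x)+S_m\phi(f^n(x))$. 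Since $Df^n(E)$ is a $k$-plane based at $f^n(x)$, it lies in $\tmop{Grass}_k(TM)$, so the second summand is at most $a_m(f)$. Taking the supremum over $E$ on the left yields $a_{n+m}(f)\leq a_n(f)+a_m(f)$. Fekete's lemma then gives $\lim_{n\to\infty} a_n(f)/n = \inf_{n\geq 1} a_n(f)/n$, which is the claim.

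For part (2), I would fix $\alpha>0$ and, by (1), pick $N\in\mathbb{N}$ large enough that $a_N(f)/N < \sigma_k(f,\phi)+\alpha/3$. The key observation is that the map $g\mapsto a_N(g)$ is continuous in the $C^1$ topology: the function $(g,E)\mapsto \log\lvert\tmop{Jac}(g^N,E)\rvert + S_N\phi(\pi_k(E))$ is continuous on $\tmop{Diff}^1_\omega(M)\times\tmop{Grass}_k(TM)$ (the Jacobian depends only on $Dg$ up to order $N$, and $S_N\phi$ depends continuously on the orbit under $g$), and $\tmop{Grass}_k(TM)$ is compact. Hence there is a $C^1$-neighborhood $\mathcal{U}$ of $f$, uniform in $k$ since there are only finitely many values of $k$, such that $a_N(g)/N < \sigma_k(f,\phi)+2\alpha/3$ for every $g\in\mathcal{U}$.

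To upgrade this to all $n\geq N_1$, I would use subadditivity of $a_n(g)$ (proved exactly as in part (1), with $f$ replaced by $g$). Writing $n=qN+r$ with $0\leq r<N$, we obtain
\[
\frac{a_n(g)}{n}\leq \frac{qN}{n}\cdot\frac{a_N(g)}{N} + \frac{a_r(g)}{n}.
\]
On $\mathcal{U}$ (after shrinking if needed), the finite family $\{a_r(g): 0\leq r<N\}$ is uniformly bounded by some constant $B$, so the remainder is at most $B/n$. Choosing $N_1$ so large that $B/N_1<\alpha/3$ and $qN/n\geq 1-\alpha/(3(\sigma_k(f,\phi)+1))$ (crudely) for every $n\geq N_1$ forces $a_n(g)/n < \sigma_k(f,\phi)+\alpha$, which is the desired inequality.

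The only mildly delicate point is the joint continuity of $(g,E)\mapsto a_N$-integrand on the Grassmann bundle, since $E$ varies over an infinite-dimensional parameter relative to $g$; but compactness of $\tmop{Grass}_k(TM)$ and smoothness of $g\mapsto Dg^N$ on $\tmop{Diff}^1_\omega(M)$ reduce this to a standard uniform-continuity argument, so I do not expect any substantial obstacle.
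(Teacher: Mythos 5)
Your proof is correct and follows essentially the same route as the paper's, which likewise deduces item (1) from subadditivity of $a_n(f)=\sup_{E\in \tmop{Grass}_k(TM)}\left( \log \vert \tmop{Jac}(f^n,E)\vert + S_n \phi(\pi_k(E))\right)$ and item (2) from that subadditivity together with continuity of the integrand on the compact Grassmann bundle. One cosmetic point: since $\sigma_k(f,\phi)$ may be negative, the final estimate is cleaner if you bound $(qN/n)\,a_N(g)/N \le a_N(g)/N + (N/n)\sup_{g\in\mathcal{U}}\vert a_N(g)\vert/N$ instead of dividing by $\sigma_k(f,\phi)+1$.
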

\begin{proof}
For the first item, note that,
\[ \left(\sup_{E\in \tmop{Grass}_k(TM)}\left( \log \vert \tmop{Jac}(f^n,E)\vert + S_n \phi(\pi_k(E))\right) \right)_{n\geq 1}\]
is subadditive. Hence, we have the convergence and the first equality of the first item.

 The second item is also a consequence of the subadditivity of 
 \[\left(\sup_{E\in \tmop{Grass}_k(TM)}\left( \log \vert \tmop{Jac}(f^n,E)\vert + S_n \phi(\pi_k(E))\right) \right)_{n\geq 1}\]
 
 and continuity of the map $(g,E)\mapsto \log \vert \tmop{Jac}(g,E)\vert + S_n \phi(\pi_k(E))$. 
\end{proof}

For $\mu \in \mathbb{P}_{\tmop{erg}}(f)$, we define
\[\sigma_k(f,\phi,\mu):=\sum_{j=d_0-k+1}^{d_0}\lambda_j(f,\mu)+\int \phi \ d\mu\]

\begin{lemma}\label{lemma1}
	For any $1\leq k \leq d_0$, we have 
	\[ \sigma_k(f,\phi)=\sup_{\mu \in \mathbb{P}_{\tmop{erg}}(f)}\sigma_k(f,\phi,\mu)  \]
\end{lemma}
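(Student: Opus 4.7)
The plan is to prove the two inequalities separately, then combine them.

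\smallskip

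\emph{Lower bound} $\sigma_k(f,\phi) \geq \sup_\mu \sigma_k(f,\phi,\mu)$. I would fix an ergodic measure $\mu$ and select, by intersecting the full-measure sets produced by Oseledec's theorem and Birkhoff's ergodic theorem, a point $x$ at which both the Oseledec splitting $T_xM = E_1(x) \oplus \cdots \oplus E_{d_0}(x)$ exists (each $E_i$ with exponent $\lambda_i(f,\mu)$) and $\frac{1}{n}S_n\phi(x) \to \int \phi \, d\mu$. Let $E \subset T_xM$ be the $k$-plane spanned by the top-$k$ Oseledec directions. Oseledec gives $\frac{1}{n}\log|\tmop{Jac}(f^n,E)| \to \sum_{j=d_0-k+1}^{d_0}\lambda_j(f,\mu)$. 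Since $(x,E)$ contributes to the sup defining $\sigma_k(f,\phi)$,
\[ \sigma_k(f,\phi) \geq \lim_n \frac{1}{n}\bigl(\log|\tmop{Jac}(f^n,E)| + S_n\phi(x)\bigr) = \sigma_k(f,\phi,\mu), \]
and taking the supremum over ergodic $\mu$ yields the desired inequality.

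\smallskip

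\emph{Upper bound} $\sigma_k(f,\phi) \leq \sup_\mu \sigma_k(f,\phi,\mu)$. Define the pointwise cocycle
\[ \Phi_n(x) := \sup_{E \in \tmop{Grass}_k(T_xM)} \log|\tmop{Jac}(f^n,E)| + S_n\phi(x). \]
Compactness of the Grassmannian fibers shows that the supremum is attained, so $\Phi_n$ is continuous in $x$, and by construction $\sigma_k(f,\phi) = \lim_n \frac{1}{n} \sup_{x \in M} \Phi_n(x)$. Applying the chain rule $\tmop{Jac}(f^{n+m},E) = \tmop{Jac}(f^n,E) \cdot \tmop{Jac}(f^m, Df^n(E))$ as in Lemma \ref{JacLemma} gives the subadditive relation $\Phi_{n+m}(x) \leq \Phi_n(x) + \Phi_m(f^n x)$. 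I would then invoke the variational principle for continuous subadditive potentials (Cao--Feng--Huang) to get
\[ \lim_n \frac{1}{n}\sup_{x \in M} \Phi_n(x) = \sup_{\mu \in \mathbb{P}(f)} \Phi^*(\mu), \qquad \Phi^*(\mu) := \lim_n \tfrac{1}{n}\int \Phi_n \, d\mu, \]
and by affinity of $\Phi^*$ the supremum is realized on ergodic measures.

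\smallskip

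It remains to identify $\Phi^*(\mu)$ for ergodic $\mu$. By Kingman's subadditive ergodic theorem, $\frac{1}{n}\Phi_n \to \Phi^*(\mu)$ $\mu$-a.e. At any Oseledec-regular point $x$, the multiplicative ergodic theorem shows that $\frac{1}{n}\sup_E \log|\tmop{Jac}(f^n,E)| \to \sum_{j=d_0-k+1}^{d_0}\lambda_j(f,\mu)$ (the sup being asymptotically realized by the top-$k$ Oseledec subspace), while Birkhoff gives $\frac{1}{n}S_n\phi(x) \to \int\phi\,d\mu$. Hence $\Phi^*(\mu) = \sigma_k(f,\phi,\mu)$, finishing the proof. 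The main obstacle is the appeal to the subadditive variational principle; this reduces to checking that $\Phi_n$ is continuous (from compactness of the Grassmannian bundle and the $C^1$ regularity of $f$) and subadditive (from the Jacobian chain rule), both routine verifications that I would carry out before invoking the principle.
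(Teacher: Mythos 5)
Your argument is correct, but the upper bound is proved by a genuinely different route than the paper's. The paper (following Buzzi--Crovisier--Fisher, Lemma 5.7) lifts the problem to the Grassmannian bundle: it considers the induced homeomorphism $f_k\colon E\mapsto Df(E)$ on $\tmop{Grass}_k(TM)$ and the single continuous function $\psi_k(E)=\log|\tmop{Jac}(f,E)|+\phi(\pi_k(E))$, so that the quantity in question becomes an ordinary additive Birkhoff sum; the classical result ``growth rate of maximal Birkhoff averages equals the sup of space averages over ergodic measures'' is then applied to $(f_k,\psi_k)$, and each ergodic $\nu$ for $f_k$ is projected by $(\pi_k)_*$ to an $f$-invariant measure, with Birkhoff plus Oseledec giving $\int\psi_k\,d\nu\le\sigma_k(f,\phi,\mu)$. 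You instead stay on $M$, take the fiberwise supremum $\Phi_n$, and pay for that with subadditivity: you need the subadditive version of the same ergodic-optimization statement together with Kingman and the singular-value characterization of maximal $k$-volume growth to identify $\Phi^*(\mu)=\sigma_k(f,\phi,\mu)$. Both work; the paper's route needs only the additive result but has to manage measures on the bundle, while yours avoids the bundle dynamics at the cost of heavier subadditive machinery. Two small repairs to make your write-up airtight: the variational principle you invoke is not quite Cao--Feng--Huang (theirs includes the entropy term and computes a pressure); the statement $\lim_n\frac1n\sup_x\Phi_n(x)=\sup_\mu\Phi^*(\mu)$ is due to Schreiber (also Sturman--Stark, Morris), so cite that instead. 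And rather than appealing vaguely to ``affinity'' for passing to ergodic measures, note that $\Phi^*(\mu)=\inf_n\frac1n\int\Phi_n\,d\mu$ is affine and upper semicontinuous, so its maximum on the Choquet simplex $\mathbb{P}(f)$ is attained at an extreme, i.e.\ ergodic, point (or quote the ergodic form of Schreiber's theorem directly). Finally, in the lower bound, if the top Oseledec blocks do not have total dimension exactly $k$, take $E$ to be any $k$-plane wedged between consecutive levels of the fast flag; the growth rate is still the sum of the top $k$ exponents with multiplicity.
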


	The proof of this Lemma is essentially the same as that of ~\cite[Lemma 5.7]{2016arXiv160601765B}. We give a proof for completeness. 
\begin{proof}
	Let $\mu \in \mathbb{P}_{\tmop{erg}}(f)$. By Oseledets theorem \cite{Osedelec}, we know that for $\mu$-almost every $x\in M$, there is a $k$-dimensional subspace of $T_xM$ whose  volume grows at the rate given by the $k$ largest Lyapunov exponents of $\mu$. Therefore, 
	
	\begin{eqnarray*}
		\sigma_k(f,\phi,\mu)&=& \inf_{n \geq 1} \sup_{E\in \tmop{Grass}_k(T_xM)} \frac{1}{n} \log \vert \tmop{Jac}(f^n,E)\vert + \int \phi \ d\mu \\
		&\leq& \sigma_k(f,\phi).
	\end{eqnarray*}

	The inequality is because the supremum in $\sigma_k(f,\phi,\mu)$ is taken over $\tmop{Grass}_k(T_xM)$ while that of $\sigma_k(f,\phi)$ is taken over $\tmop{Grass}_k(TM)$. 
	
	For the converse inequality, we consider the homeomorphism $f_k$ on $\tmop{Grass}_k(TM)$ defined by \[f_k(E)=Df(E)  \] 
	and the continuous function $\psi$ defined by \[ \psi_k(E) = \log \vert \tmop{Jac}(f,E)\vert + \phi(\pi_k(E)).  \]
	
	\begin{eqnarray*}
		\sigma_k(f,\phi)&=&\lim_{n \rightarrow \infty}\sup_{E\in \tmop{Grass}_k(TM)} \left(\frac{1}{n}\sum_{i=0}^{n-1}\psi_k(f_k^i(E))\right) \\
		&=&\sup_{\nu \in \mathbb{P}_{\tmop{erg}} (f_k)} \int_{\tmop{Grass}_k(TM)}\psi_k\ d\nu.
	\end{eqnarray*}

	The second equality is a well known result, for a proof, see \cite[Proposition 3.1]{Inoquio-Renteria2012}.
	Let $\nu \in \mathbb{P}_{\tmop{erg}} (f_k)$. Then the measure $\mu= (\pi_k)_{*}\nu$ is invariant under $f$. Birkhoff Ergodic Theorem gives $x\in M$ satisfying Oseledets theorem and $E_0 \in \tmop{Grass}_k(T_xM)$ such that 
	\begin{eqnarray*}
	\int_{\tmop{Grass}_k(TM)}\psi_k\ d\nu&=& \lim_{n \rightarrow \infty}\frac{1}{n} \log \vert \tmop{Jac}(f^n,E_0) \vert + \int \phi \ d\mu\\
	&\leq& \lim_{n \rightarrow \infty} \sup_{E\in \tmop{Grass}_k(T_xM)}\frac{1}{n}\log \vert \tmop{Jac}(f^n,E)\vert + \int \phi \ d\mu \\
	&=&\sigma_k(f,\phi,\mu).
	\end{eqnarray*}
	Hence, $\sigma_k(f,\phi)\leq\sup_{\mu \in \mathbb{P}_{\tmop{erg}}(f)}\sigma_k(f,\phi,\mu)$.

\end{proof}

\begin{lemma}\label{lemma2}
      There exists a residual subset $\mathcal{G}$ of $\mathcal{E}_{\omega} (M)$ such that for any $f\in \mathcal{G}$ and any $\phi \in C^0(M,\mathbb{R})$ we have
      \[ \Delta_{\phi}(f)=\max_{1\leq k\leq d_0}\sigma_k(f,\phi). \]
\end{lemma}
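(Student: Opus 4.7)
The plan is to identify both sides of the asserted equality with the intermediate quantity
\[ \Theta_{\phi}(f) := \sup_{\mu \in \mathbb{P}_{\tmop{erg}}(f)} \left( \Delta(f,\mu) + \int \phi \, d\mu \right). \]
The first identification, $\max_{1\leq k\leq d_0}\sigma_k(f,\phi) = \Theta_{\phi}(f)$, will hold for every $f \in \mathrm{Diff}^1_\omega(M)$ and uses only Lemma \ref{lemma1} together with the conservativity identity $\sum_j \lambda_j(f,\mu)=0$. The second, $\Delta_{\phi}(f) = \Theta_{\phi}(f)$, will hold on the residual subset delivered by Corollary \ref{c.ergodic-closing}, and crucially this subset does not depend on $\phi$, so a single residual $\mathcal{G}$ will suffice for every continuous potential at once.

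For the first identification, I fix an ergodic $\mu$ and compute $\max_{1\leq k\leq d_0}\sigma_k(f,\phi,\mu)$ directly. Ordering the Lyapunov exponents $\lambda_1\leq \cdots\leq \lambda_{d_0}$, the partial sum $S_k := \sum_{j=d_0-k+1}^{d_0}\lambda_j(f,\mu)$ has increments $S_{k+1}-S_k = \lambda_{d_0-k}(f,\mu)$, which are nonincreasing in $k$. Hence $S_k$ is concave, it increases exactly while $\lambda_{d_0-k}>0$, and its maximum value equals $\sum_j \lambda_j^+(f,\mu)$. By conservativity $\sum_j \lambda_j^+(f,\mu) = \Delta(f,\mu)$, so
\[ \max_{1\leq k\leq d_0}\sigma_k(f,\phi,\mu) = \Delta(f,\mu) + \int \phi \, d\mu \quad\text{for every } \mu \in \mathbb{P}_{\tmop{erg}}(f). \]
A routine exchange of $\max$ and $\sup$—each dominates the other, by inserting for a given $\mu$ an optimal $k$, respectively for a given $k$ an optimal $\mu$—combined with Lemma \ref{lemma1} gives $\max_k \sigma_k(f,\phi) = \Theta_{\phi}(f)$ without any genericity.

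For the second identification, the inequality $\Delta_{\phi}(f)\leq \Theta_{\phi}(f)$ is immediate, since any periodic orbit carries an ergodic measure whose Lyapunov exponents are those of $f$ along the orbit. For the reverse, I would let $\mathcal{G}$ be the residual subset supplied by Corollary \ref{c.ergodic-closing}: for $f\in \mathcal{G}$ any $\mu\in\mathbb{P}_{\tmop{erg}}(f)$ is approximated by periodic orbits $O_n$ in the vague topology, with Lyapunov exponents converging to those of $\mu$. Vague convergence gives $\frac{1}{T(O_n)}\sum_{i=0}^{T(O_n)-1}\phi(f^i(O_n)) \to \int \phi \, d\mu$, while the convergence of the exponents together with continuity of $(\lambda_1,\dots,\lambda_{d_0})\mapsto \min(\sum_i\lambda_i^+,\sum_i\lambda_i^-)$ gives $\Delta(f,O_n) \to \Delta(f,\mu)$. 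Hence $\Delta_{\phi}(f,O_n) \to \Delta(f,\mu)+\int\phi\,d\mu$, and taking the supremum over $\mu$ produces $\Theta_{\phi}(f)\leq \Delta_{\phi}(f)$.

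Chaining the two identifications yields $\Delta_{\phi}(f) = \max_{1\leq k\leq d_0}\sigma_k(f,\phi)$ on $\mathcal{G}\cap \mathcal{E}_{\omega}(M)$, simultaneously for every $\phi \in C^0(M,\mathbb{R})$, because $\mathcal{G}$ was extracted from Corollary \ref{c.ergodic-closing} and has no $\phi$-dependence. The main conceptual step is the algebraic identity $\max_k \sum_{j=d_0-k+1}^{d_0}\lambda_j = \Delta$ for a conservative Lyapunov spectrum, which is where the standing hypothesis $f\in \mathrm{Diff}^1_\omega(M)$ really enters; beyond this, the argument is a straightforward composition of Lemma \ref{lemma1} and the ergodic closing lemma, so I do not expect a serious obstacle.
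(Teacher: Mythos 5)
Your argument is correct and follows essentially the same route as the paper's (which defers to the analogue of \cite[Proposition 5.8]{2016arXiv160601765B}): the conservative partial-sum identity $\max_{1\leq k\leq d_0}\sum_{j=d_0-k+1}^{d_0}\lambda_j=\Delta$, Lemma \ref{lemma1}, and Corollary \ref{c.ergodic-closing} to pass between ergodic measures and periodic orbits, with the residual set coming from the closing lemma and hence independent of $\phi$. Routing the comparison through the intermediate quantity $\Theta_{\phi}(f)$ rather than exchanging $\sup_{p}$ and $\max_k$ directly is only a cosmetic reorganization.
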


		This is a trivial generalization of ~\cite[Proposition 5.8]{2016arXiv160601765B}. We point out that the residual subset $\mathcal{G}$ of $\mathcal{E}_{\omega} (M)$ does not depend on the potential function $\phi$ because it is given by Corollary \ref{c.ergodic-closing}. 
\begin{proof}
	We know 
      \[ \Delta_{\phi}(f)=\sup_{p \in \tmop{Per} (f)}\Delta_{\phi}(f,p)=\sup_{p \in \tmop{Per} (f)}\max_{1\leq k\leq d_0}\sigma_k(f,\phi,p). \]
      Let $f\in \tmop{Diff}^1_{\omega}(M)$ be generic as in Corollary \ref{c.ergodic-closing} . Then each $\mu \in \mathbb{P}_{\tmop{erg}}(f)$ is approximated by periodic points with arbitrarily close Lyapunov exponents, so we have
      \[ \sup_{p \in \tmop{Per} (f)}\sigma_k(f,\phi,p)\geq \sup_{\mu \in \mathbb{P}_{\tmop{erg}}(f)}\sigma_k(f,\phi,\mu).  \]
      Since every periodic orbit gives an ergodic measure, we actually have equality for the above inequality. 
      Apply Lemma \ref{lemma1}, we have 
      \[ \Delta_{\phi}(f)=\max_{1\leq k\leq d_0}\sup_{\mu \in \mathbb{P}_{\tmop{erg}}(f)}\sigma_k(f,\phi,\mu)=\max_{1\leq k\leq d_0}\sigma_k(f,\phi). \]
\end{proof}
By Lemma \ref{lemma1} and Lemma \ref{lemma2}, we have proved Theorem \ref{Pressureformula}.

\section{No Equilibrium states with positive entropy}
In this section, we will prove that for generic diffeomorphism in $\mathcal{E}_{\omega} (M)$ and any continuous potential function, there are no equilibrium states with positive measure theoretic entropy.

Let $\mathcal{R}\subset \mathcal{E}_{\omega}(M)$ be a dense $G_{\delta}$ subset on which for every $\phi\in C^0(M,\mathbb{R})$
\begin{description}
	\item[1] $f\mapsto P (f, \phi)$ is continuous and
	\item[2] $P (f, \phi)=\max_{1\leq k\leq d_0}\sigma_k(f,\phi)=\Delta_{\phi}(f)$.
\end{description}

The following perturbation lemma will be the key to prove the non-existence of equilibrium states with positive entropy.
\begin{lemma}\label{PerturbationLemma}
	For any $g\in \mathcal{R}$, any continuous potential function $\phi$, any $\eta \in (0,\eta_0(g))$, any $\delta>0$, and any periodic saddle $O$ for $g$, there exists $N_0\geq 0$ with the following property.
	
	For any $\rho >0$, there is an $f_0 \in \mathcal{R}$ that is $\eta$-close to $g$ and satisfies:
	\begin{description}
		\item[1] $f_0$ can be picked such that it is arbitrarily $C^1$-close to $g$ outside the $\rho$-neighborhood of $O$;
		\item[2] $P (f_0, \phi)\geq \Delta_{\phi}(g,O)+\frac{\eta}{10 C_0^2\left( \Vert Dg\Vert+ \Vert Dg^{-1}\Vert \right)}$, where $\Vert Dg\Vert:=\sup_{x\in M} \Vert Dg(x)\Vert$, $\Vert Dg^{-1}\Vert:=\sup_{x\in M} \Vert Dg^{-1}(x)\Vert$; and 
		\item[3] for any $1\leq k\leq d_0$, any $E\in  \tmop{Grass}_k(TM)$, and any $n\geq N_0$,
		\[  \log \vert \tmop{Jac} (f_0^n,E)\vert +S_n \phi(\pi_k(E)) \leq (P(f_0,\phi)+\delta)n.\]
	\end{description}
\end{lemma}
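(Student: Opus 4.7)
The plan is to perturb $g$ in three localized stages, all supported in the prescribed $\rho$-neighborhood of $O$, and to pick the final perturbation inside the residual set $\mathcal{R}$. Item 1 will be automatic from this localization, item 2 will come from combining a quantitative Franks' boost at $O$ with the horseshoe construction of Theorem~\ref{main}, and item 3 will follow from the uniform upper semicontinuity of $\sigma_k$ in Lemma~\ref{JacLemma}(2) together with continuity of $f \mapsto P(f,\phi)$ on $\mathcal{R}$.

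For item 2, I would first apply Theorem~\ref{t.linearize} at a single point of the orbit $O$ to replace $Dg$ there by a linear map within $\eta$-distance, chosen in a direction along which the periodic cocycle is not $N$-dominated. Because this perturbation fixes the orbit $O$ pointwise, the Birkhoff average $\frac{1}{T(p)} \sum_{i=0}^{T(p)-1} \phi(g^i(p))$ is unchanged, so every gain is a gain in $\Delta_{\phi}(\cdot, O)$. A careful singular-value computation, in which the chain-rule propagation through the adjacent derivatives contributes the factor $\|Dg\|+\|Dg^{-1}\|$ and the translation from an additive perturbation of $D_p g^{T(p)}$ into a change in $\sum_i \lambda_i^+$ absorbs the constant $C_0^2$, should produce a first perturbation $g_1$ with
\[ \Delta(g_1, O) \geq \Delta(g, O) + \tfrac{\eta}{c_0 C_0^2(\|Dg\|+\|Dg^{-1}\|)} \]
for some explicit $c_0 < 10$. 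Since $g \in \mathcal{E}_\omega(M)$ carries no dominated splitting on $M$, a generic choice lets us assume $O$ is $T,N$-weak for the constants $(T,N)$ supplied by Theorem~\ref{main}, and this weakness is preserved by the Franks' perturbation. I would then apply Theorem~\ref{main} to $g_1$ at $O$ with an auxiliary tolerance $\delta_0$ much smaller than the Franks' gain, producing a further $(\epsilon, V, O)$-perturbation $g_2$ that carries a horseshoe $K \subset V$ with $P(g_2|_K, \phi|_K) \geq \Delta_{\phi}(g_2, O) - \delta_0$. Finally, by density of $\mathcal{R}$ and structural stability of $K$, I would pick $f_0 \in \mathcal{R}$ sufficiently $C^1$-close to $g_2$ that $P(f_0, \phi) \geq P(g_2|_K, \phi|_K) - \delta_0$; absorbing the two $\delta_0$'s and the slack between $c_0$ and $10$ delivers the bound in item 2.

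For item 3, I would fix $\alpha := \delta/2$ and apply Lemma~\ref{JacLemma}(2): there is $N_0 = N_1(g, \alpha)$, independent of $k$, such that for every $h$ sufficiently $C^1$-close to $g$, every $1 \leq k \leq d_0$, every $E \in \operatorname{Grass}_k(TM)$ and every $n \geq N_0$,
\[ \log\bigl|\operatorname{Jac}(h^n, E)\bigr| + S_n \phi(\pi_k(E)) \leq \bigl(\sigma_k(g, \phi) + \tfrac{\delta}{2}\bigr) n. \]
Since $g \in \mathcal{R}$, property 2 gives $\sigma_k(g, \phi) \leq P(g, \phi)$, and by property 1 I may shrink the $C^1$-neighborhood of $g$ that contains $f_0$ so that $|P(f_0, \phi) - P(g, \phi)| < \delta/2$. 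Combining, $\sigma_k(g, \phi) + \delta/2 \leq P(f_0, \phi) + \delta$, which, together with the previous display applied to $h = f_0$, yields item 3.

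I expect the hard part to be the quantitative Franks' boost: showing that a single $\eta$-perturbation of the derivative at one point of $O$ can always be arranged to increase $\sum_i \lambda_i^+$ by an explicit amount of order $\eta / (\|Dg\|+\|Dg^{-1}\|)$ that does not degenerate with the period $T(p)$. This will require identifying a direction in $T_p M$ along which the periodic cocycle is non-contracting in a controlled, perturbation-stable way (exploiting the lack of $N$-domination) and then carrying out a singular-value perturbation estimate for the full return map $D_p g^{T(p)}$. The remaining ingredients, namely invoking Theorem~\ref{main}, landing in $\mathcal{R}$ by density, and extracting item 3 from upper semicontinuity, should be comparatively routine given the preparatory results already assembled earlier in the paper.
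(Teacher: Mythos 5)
Your plan for item 3 has a genuine gap, and it is precisely the point the whole lemma is designed to handle. You propose to get $N_0$ from Lemma \ref{JacLemma}(2) applied at $g$ and then transfer the bound to $f_0$ by "shrinking the $C^1$-neighborhood of $g$ that contains $f_0$". But $f_0$ is \emph{not} arbitrarily $C^1$-close to $g$: inside the $\rho$-neighborhood of $O$ the perturbation has a definite size of order $\eta$ (it must, or item 2's gain $\eta/(10C_0^2(\Vert Dg\Vert+\Vert Dg^{-1}\Vert))$ is impossible); item 1 only controls $f_0$ \emph{outside} that neighborhood. In the intended application $O$ nearly realizes $\Delta_\phi(g)=P(g,\phi)$ and $\delta\ll\eta$, so item 2 forces $P(f_0,\phi)>P(g,\phi)+\delta$, which by continuity of pressure at $g\in\mathcal{R}$ already shows $f_0$ cannot lie in an arbitrarily small neighborhood of $g$. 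Hence neither Lemma \ref{JacLemma}(2) at $g$ nor continuity of $P(\cdot,\phi)$ at $g$ applies to $f_0$; and if you instead invoke Lemma \ref{JacLemma}(2) at $f_0$ itself, the resulting $N$ depends on $f_0$, hence on $\rho$, which violates the required order of quantifiers ($N_0$ is chosen before $\rho$). The paper resolves this by building a linear model $G$ on $T_OM$ (equal to $Dg$ far from the zero section and to the boosted maps $U_x\circ Dg(x)$ near it), fixing $N_2$ for $G$ once and for all, and then realizing the perturbation near $O$ as the rescaled map $G_R$; since rescaling does not change Jacobians, the same $N_2$ works for every scale $R$, every length-$N$ orbit segment of the perturbed map is a segment of either $g'$ or $G_R$, and $N_0=(l_0+1)N$ ends up depending only on $g,\phi,\delta$ and the metric. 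Some substitute for this scale-invariance argument is unavoidable.

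Your route to item 2 also deviates in ways that matter. First, perturbing the derivative at a \emph{single} point of $O$ by $\eta$ changes the exponents of the return map $D_pg^{T(p)}$ only by $O(\eta)/T(p)$, so the gain degenerates with the period --- exactly the difficulty you flag; the paper avoids it by applying Franks' lemma at \emph{every} point of the orbit with the maps $U_x$ (contracting $E^s$ and expanding $E^u$ by $e^{\pm b/\dim}$ with $b=\eta/(8C_0^2(\Vert Dg\Vert+\Vert Dg^{-1}\Vert))$), which yields a period-independent gain $\Delta_\phi(h,O)=\Delta_\phi(g,O)+b$, while leaving the Birkhoff average of $\phi$ on $O$ unchanged. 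Second, Theorem \ref{main} is neither available nor needed here: the given saddle $O$ need not be $T,N$-weak (it is fixed in the statement, you cannot choose it, and making it weak would fight against the exponent boost), and for $f_0\in\mathcal{R}$ one simply uses the pressure formula $P(f_0,\phi)=\Delta_\phi(f_0)\geq\Delta_\phi(f_0,O')$ for the continuation $O'$ of $O$, which gives item 2 directly after choosing $f_0\in\mathcal{R}$ close to the boosted diffeomorphism $h$.
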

\begin{remark}
	This is very similar to the construction in \cite[Lemma 6.2]{2016arXiv160601765B}. The difference is the choice of $N_2$ in the proof. Also notice that the difference between item 3 of this theorem and Lemma \ref{JacLemma} is that $N_0$ here does not depend on $f_0$ and $\rho$. 
\end{remark}
\begin{proof}
	Let $\eta_0(g)>0$ be small enough such that for any $f$ that is $\eta_0(g)$-close to $g$ in $C^1$-distance, we have $\sup_{E\in \tmop{Grass}_k(TM)}\vert  \tmop{Jac} (f,E) \vert \leq 2 \sup_{E\in \tmop{Grass}_k(TM)} \vert  \tmop{Jac} (g,E) \vert$. The construction of the first two items are the same as that of ~\cite[Lemma 6.2]{2016arXiv160601765B}. For completeness, we include the constructions here.
	\
	
	Let 
	\[ b=\frac{\eta}{8C_0^2 (\Vert Dg \Vert+\Vert Dg^{-1} \Vert)} \]
	For each point $x\in O$, there exist a linear map $C: T_xM \rightarrow \mathbb{R}^{d_0}$ which sends the stable subspace $E^s(x)$ to $\mathbb{R}^{\dim E^s(x)}\times \lbrace 0 \rbrace^{d_0-\dim E^s(x)}$ and satisfies $\Vert C \Vert, \Vert C^{-1} \Vert \leq C_0$. Define
	\[ U_x=C^{-1} \circ \exp 
	\begin{pmatrix}
	 \frac{-b}{\dim (E^s)} I_{\dim (E^s)} & 0 \\
	 0 & \frac{b}{\dim (E^u)} I_{\dim (E^u)}
	\end{pmatrix} 
	\circ C \]
	Note that 
	\begin{eqnarray*}
		\Vert U_x \circ Dg(x)-Dg(x) \Vert &\leq&  2b C_0^2 \max (\Vert Dg \Vert,\Vert Dg^{-1} \Vert), \\
		\Vert (U_x \circ Dg(x))^{-1}-Dg(x)^{-1} \Vert &\leq& 2b C_0^2 \max (\Vert Dg \Vert,\Vert Dg^{-1} \Vert). 
	\end{eqnarray*}
	Let $V$ be the $\rho$-neighborhood of $O$. By Franks' Lemma (Theorem \ref{t.linearize}), there is a $(\eta/2, V, O)$-perturbation $f^{\prime}$ with $Df^{\prime}(x)=U_x \circ Dg(x)$ for all $x\in O$. $O$ is also a hyperbolic periodic orbit for $f^{\prime}$ and $\Delta_{\phi}(f^{\prime},O)=\Delta_{\phi}(g,O)+b$. Choose $f_0\in \mathcal{R}$ arbitrarily $C^1$-close to $f^{\prime}$, so it is arbitrarily $C^1$-close to $g$ outside the $\rho$-neighborhood of $O$. By the structural stability of hyperbolic periodic orbit, there is a hyperbolic periodic orbit $O^{\prime}$ for $f_0$ close to $O$ in Haursdorff distance such that,
	\begin{eqnarray*}
		P(f_0,\phi)&=&\Delta_{\phi}(f_0) \\
		                     &\geq& \Delta_{\phi}(f_0,O^{\prime}) \\
		                     &>& \Delta_{\phi}(f^{\prime},O)-b/5 \\
		                     &=&\Delta_{\phi}(g,O)+4b/5 		                     
	\end{eqnarray*}
     This gives the first two items of the lemma. 
	
	In order to prove the third item, we first define the following for each $0\leq k\leq d_0$, 
	\begin{description}
		\item[1] An integer $N_1=N_1(g,\delta)$ such that for any diffeomorphism $f$ $C^1$-close to $g$, any $E\in \tmop{Grass}_k(TM)$, and any $n\geq N_1$, 
		\[ \log \vert \tmop{Jac} (g^n,E)\vert +S_n \phi(\pi_k(E)) \leq (\sigma_k(f,\phi)+\delta/3)n. \]
		\item[2] A diffeomorphism $G$ on the tangent bundle $T_OM$, defined as follow. Let $V_r$ be the union of the $r$-balls at the the origin in each $T_xM, x \in O$. Let $u=u_s+u_u \in T_xM=E^s(x)\oplus E^u(x)$ and choose the norm $\Vert \cdot \Vert$ on $T_xM$ to be $\Vert u \Vert := \Vert u_s\Vert_s + \Vert u_u \Vert_u$, where $\Vert \cdot \Vert_s$ and $\Vert \cdot \Vert_u$ are the norm induced by Riemannian metric on $E^s(x)$ and $E^u(x)$ respectively. Notice that in this norm, $\Vert u_s\Vert=\Vert u_s\Vert_s$ and $\Vert u_u\Vert=\Vert u_u\Vert_u$. Then we can find a diffeomorphism $G$ of $T_OM$ and $0<r_1<1<r_2<\infty$ such that 
		\begin{itemize}
			\item $G$ coincides with $Dg(x)$ outside the unit balls of each space $T_xM, x \in O$, and with $U_x \circ Dg(x)$ on $V_{r_1}\cap T_xM$, 
			\item $G$ is $\eta/2$-close to $Dg|_O$, and 
			\item if $\Vert u \Vert \geq r_2$, then $\Vert G^n(u) \Vert \geq 1$ for either all $n\geq 0$ or all $n\leq 0$. 
		\end{itemize}
	The first two items follow from Franks' Lemma(Theorem \ref{t.linearize}), and the third item requires a little bit explanation. Note that $O$ is a saddle for $Dg|_O$, then let $C>0, \lambda\in (0,1)$ be the constants in the definition of hyperbolic set. Let $\Vert u \Vert \geq r_2$. Then we have
	\begin{eqnarray*}
		\Vert G^n u \Vert &=& \Vert G^n (u_s+u_u) \Vert \\
		                               &\geq & C\lambda^{-n} \Vert u_u \Vert -C \lambda^n \Vert u_s \Vert\\
		                               &=& C\lambda^{-n} \Vert u \Vert \left(\frac{\Vert u_u \Vert}{\Vert u \Vert}- \frac{\lambda^{2n}\Vert u_s \Vert}{\Vert u \Vert}\right)\\
		                               &=&C\lambda^{-n} \Vert u \Vert \left(1- \frac{(1+\lambda^{2n})\Vert u_s \Vert}{\Vert u \Vert}\right).
	\end{eqnarray*}
For $n=0$, it is obvious that $\Vert G^n u \Vert =\Vert  u \Vert \geq r_2>1$. For any $n\geq 1$, we want 
\[ C\lambda^{-n} \Vert u \Vert \left(1- \frac{(1+\lambda^{2n})\Vert u_s \Vert}{\Vert u \Vert}\right)\geq 1. \]
Note the above expression attains its minimum at $n=1$, so we need 
\[ C\lambda^{-1} r_2 \left(1- \frac{(1+\lambda^{2})\Vert u_s \Vert}{\Vert u \Vert}\right)\geq 1, \]
or equivalently, 
\[ \frac{\Vert u_s \Vert}{\Vert u \Vert}\leq \frac{1-\lambda/(Cr_2)}{1+\lambda^{2}}. \]
A similar argument shows that for any $n\leq 0$,  we need 
\[ \frac{\Vert u_u \Vert}{\Vert u \Vert}\leq \frac{1-\lambda/(Cr_2)}{1+\lambda^{2}}. \]
So in order to make item 3 possible, we need at least one of the previous two inequalities always be satisfied. Recall $\Vert u \Vert =\Vert u_s\Vert + \Vert u_u \Vert$, therefore we need 
\[ \frac{1-\lambda/(Cr_2)}{1+\lambda^{2}}\geq 1/2. \]
 That is
\[ r_2 \geq \frac{2\lambda}{C(1-\lambda^2)}. \] 

Let $\Lambda$ be the maximal invariant set of $G$ in $V_{r_2}$. Let $ y\in V_{r_2}$, if $O(y)\subset V_{r_2}$, then $O(y)\subset \Lambda$. Hence the item 3 in the above construction of $G$ implies that $\sigma_k(G,\phi)$ is well defined and satisfies
\[ \sigma_k(G,\phi)=\max(\sigma_k(G|\Lambda,\phi), \sigma_k(Dg|_O,\phi))=\sigma_k(G|\Lambda,\phi). \]
The second equality is because $\sigma_k(U_x\circ Dg|_O,\phi)$ is larger than $\sigma_k(Dg|_O,\phi)$ and $O\subset \Lambda$.
\item[3] An integer $N_2$ such that for any $E\in \tmop{Grass}_k(TM)$ and $n\geq N_2$,
	\[ \log \vert \tmop{Jac} (G^n,E)\vert \leq (\sigma_k(G)+\delta/9)n \]
	and 
	\[ \left|  \frac{1}{n}S_n\phi(x)- \frac{1}{ \#O}  \sum_{i = 0}^{\#O - 1} \phi (g^i (x))\right|  <\delta/9 ,\forall x\in O.\]
	\end{description}
\subsection*{Construction of $f_0$}
 
 \
 
 Franks' Lemma (Theorem \ref{t.linearize}) gives a diffeomorphism $g^{\prime}$ which is linear in a small neighborhood of $O$ and arbitrarily close to $g$. The dynamics near $O$ for $g^{\prime}$ can be identified with the linear cocycle $Dg$ over the tangent bundle $T_OM$ in an obvious way. We choose $R>0$ to be small and define a perturbation $h$ of $g$ by replacing $g^{\prime}$ in a small neighborhood of $O$ by the diffeomorphism $G_R: z\mapsto RG(R^{-1}z)$.  Note that we identified the tangent bundle $T_OM$ with $M$ in a small neighborhood of $O$ and the neighborhood where $h$ equals $G_R$ is contained in the neighborhood on which $g^{\prime}$ is linear. In particular, $h$ preserves the set $\Lambda_R:=R\Lambda$ and $\sigma_k(h|\Lambda_R)=\sigma_k(G|\Lambda)$. Also, the Jacobian is not changed. Hence, $G_R$ satisfies the same inequality with the same $N_2$ as $G$. If we take $R$ to be so small (this means $\Lambda_R$ is small) that the variation of the potential function $\phi$ near each point in the periodic orbit $O$ is smaller than $\delta/18$. Then for $n\geq N_2$,
\[ \log \vert \tmop{Jac} (G_R|\Lambda_R^n,E)+S_n\phi(\pi_k(E))\vert \leq (\sigma_k(G_R|\Lambda_R)+\frac{1}{n}S_n\phi(x)+\delta/6)n\]
for the $x\in O$ that is closest to $\pi_k(E)$. 
Note that 
\[ \sigma_k(G_R|\Lambda_R,\phi)>\sigma_k(G_R|\Lambda_R) +\frac{1}{ \#O}  \sum_{i = 0}^{\#O - 1} \phi (g^i (x))-\delta/18.\]
Hence by definition of $N_2$, 
\[ \log \vert \tmop{Jac} (G_R|\Lambda_R^n,E)+S_n\phi(\pi_k(E))\vert \leq (\sigma_k(G_R|\Lambda_R,\phi)+\delta/3)n. \]
Let $N=\max(N_1,N_2)$. We can choose $R$ to be arbitrarily small once $N$ is given. It is then clear that any piece of orbit of $h$ of length $N$ coincide with a piece of orbit of $g^{\prime}$ or of $G_R$. Hence, for any $E \in \tmop{Grass}_k(T_xM)$,

\begin{eqnarray*}
	\log \vert \tmop{Jac}(h^N,E) \vert +S_N\phi(\pi_k(E))&<&(\max (\sigma_k(G_R|\Lambda_R,\phi),\sigma_k(g^{\prime},\phi))+\delta/3)N \\
	&\leq& (\sigma_k(h,\phi)+\delta/3)N.
\end{eqnarray*}

Note that $N$ only depends on $g$ and $G$. (This is very important for the sequel
of the argument.)
For each $k$, choose $\mu \in \mathbb{P}_{erg}(h)$ with $\sigma_k(h,\phi,\mu)>\sigma_k(h,\phi)-\delta/12$. By $\tmop{Ma \tilde{n}  \acute{e}}$'s Ergodic Closing lemma(Corollary \ref{c.ergodic-closing}), there is a perturbation $h^{\prime}$ with a periodic point $p$ such that $\sigma_k(h^{\prime},\phi,p)>\sigma_k(h,\phi,\mu)-\delta/12$. Franks' Lemma (Theorem \ref{t.linearize}) gives a further perturbation $h^{\prime \prime}$ with $p$ a periodic saddle of it and $\sigma_k(h^{\prime \prime},\phi,p)>\sigma_k(h^{\prime },\phi,p)-\delta/12$. By the stability of hyperbolic periodic point $p$, for any $f_0$ $C^1$-close to $h^{\prime \prime}$, there is a periodic saddle $p^{\prime }$ for $f_0$ close to $p$ such that $\sigma_k(f_0,\phi,p^{\prime})>\sigma_k(h^{\prime \prime},\phi,p)-\delta/12$.
Hence, we can choose $f_0 \in \mathcal{R}$ which also satisfies the first two items of this Lemma, thus 

\begin{eqnarray*}
	P(f_0,\phi)&\geq& \sigma_k(f_0,\phi) \\
	                     &>&\sigma_k(h,\phi)-\delta/3.
\end{eqnarray*}

Each of the above perturbation is arbitrarily small and $f_0$ is $\eta$-close to $g$.
Then for any $E\in \tmop{Grass}_k(TM)$, we have
\begin{eqnarray*}
	\log \vert \tmop{Jac}(f_0^N,E) \vert +S_N\phi(\pi_k(E))&<&\log \vert \tmop{Jac}(h^N,E) \vert +S_N\phi(\pi_k(E))+N \delta/12\\
	                                                                                             &\leq& (\sigma_k(h,\phi)+5\delta/12)N\\
	                                                                                             &\leq& (P(f_0,\phi)+3\delta/4)N.
\end{eqnarray*}
For $0\leq r<N$ and $l>l_0=[4(\Vert \phi \Vert_{\infty}+\sup_{E\in \tmop{Grass}_k(TM)} \log \vert 2\tmop{Jac}(g,E) \vert)/\delta]$, this implies
\begin{eqnarray*}
	&\log& \vert \tmop{Jac}(f_0^{lN+r},E) \vert +S_{lN+r}\phi(\pi_k(E))\\
	&<&(P(f_0,\phi)+3\delta/4)lN +(N-1)(\Vert \phi \Vert_{\infty}+\sup_{E\in \tmop{Grass}_k(TM)} \log \vert \tmop{Jac}(f_0,E) \vert) \\
	&<&(P(f_0,\phi)+3\delta/4)lN +N(\Vert \phi \Vert_{\infty}+\sup_{E\in \tmop{Grass}_k(TM)} \log \vert 2\tmop{Jac}(g,E) \vert)\\
	&<& (P(f_0,\phi)+\delta)lN\\
	&\leq&(P(f_0,\phi)+\delta)(lN+r).
\end{eqnarray*}
Let $N_0=(l_0+1)N$. Note that $N_0$ only depends on $N$,$\phi$,$\delta$, $g$ and the Riemannian metric.
\end{proof}
\begin{proposition}\label{non-generic-concentration}
	Fix $\epsilon,\alpha \in (0,1), \phi\in C^0(M,\mathbb{R})$. Then for any $f_0$ in a dense subset of $\mathcal{E}_{\omega}(M)$, there is a constant $\delta>0$ and a periodic orbit $O\subset M$ such that for any $f\in \mathcal{E}_{\omega}(M)$ $C^1$-close to $f_0$ and any ergodic measure $\mu$ for $f$, we have
	\[ h_{\mu}(f)+\int \phi \ d\mu>P(f,\phi)-\delta \ \textup{implies}\  \mu \left(M\setminus \bigcup_{x\in O} B_f(x,\epsilon,\#O)\right)<\alpha \]
\end{proposition}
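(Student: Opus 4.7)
The plan is to construct $f_0$ as a specific perturbation of some $g \in \mathcal{R}$ that creates a pressure gap between $P(f_0,\phi)$ and what any orbit avoiding a chosen saddle $O$ can realize, then derive the concentration by a Birkhoff--chunk argument on the Grassmannian. First I fix $g \in \mathcal{R}$ and use Theorem \ref{PressureFormula} to pick a hyperbolic periodic saddle $O$ of $g$ with $\Delta_\phi(g,O) > P(g,\phi) - c\eta/2$, where $c = 1/(10 C_0^2 (\Vert Dg\Vert + \Vert Dg^{-1}\Vert))$. I then apply Lemma \ref{PerturbationLemma} to $g, O$ with parameters $\eta \in (0,\eta_0(g))$ and an auxiliary error $\delta_0 \in (0,\delta)$ to be fixed later, producing $f_0 \in \mathcal{R}$ that is $\eta$-close to $g$ in $C^1$ and coincides with an arbitrarily small perturbation of $g$ outside a $\rho$-neighborhood $V$ of $O$, satisfies the uniform bound of item 3 with threshold $N_0$ depending only on $g,\phi,\delta_0$, and enjoys
\[ P(f_0,\phi) \geq \Delta_\phi(g,O) + c\eta \geq P(g,\phi) + \Gamma, \qquad \Gamma := c\eta/2 > 0. \]
Letting $g$ vary over $\mathcal{R}$, the resulting $f_0$'s form a dense subset of $\mathcal{E}_\omega(M)$.

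Set $\delta := \alpha\Gamma/8$. For $f$ sufficiently $C^1$-close to $f_0$, $O$ persists as a hyperbolic periodic orbit of period $T := \#O$, $|P(f,\phi) - P(f_0,\phi)| < \delta_0$ by Theorem \ref{continuity}, and by shrinking $\rho,\epsilon$ I arrange that $W := \bigcup_{x \in O} B_f(x,\epsilon,T) \subset V$ while $f$ is $2\delta_0$-close to $g$ in $C^1$ outside $V$. Now suppose $\mu \in \mathbb{P}_{\tmop{erg}}(f)$ satisfies $h_\mu(f) + \int\phi\,d\mu > P(f,\phi) - \delta$ yet $\mu(M \setminus W) \geq \alpha$. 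Ruelle's inequality applied to $f$ and $f^{-1}$, combined with Lemma \ref{lemma1}, yields some $k$ with $\sigma_k(f,\phi,\mu) > P(f,\phi) - \delta$, and Oseledets provides an $f$-invariant Borel lift $\hat\mu$ on $\tmop{Grass}_k(TM)$ of $\mu$ such that for $\hat\mu$-a.e.\ $(x,E)$,
\[ \tfrac{1}{n}\bigl[\log|\tmop{Jac}(f^n,E)| + S_n\phi(x)\bigr] \longrightarrow \sigma_k(f,\phi,\mu). \]

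The heart of the argument is to partition $[0,n)$ into $\lfloor n/N_0\rfloor$ consecutive blocks of length $N_0$ and call a block \emph{far} if every $f^i\pi_k(E)$ in the block lies outside $V$, \emph{near} otherwise. Since $W \subset V$, any near block contributes a $V$-visit and hence an iterate whose nearby $T$-string enters $W$; applying Birkhoff to $\mathbf{1}_W$ together with $\mu(M\setminus W) \geq \alpha$ and the fixity of $T,N_0$ forces the fraction of far blocks to be at least $\alpha/2$ for $\hat\mu$-a.e.\ $(x,E)$ and $n$ large. On each far block $f$ is $2\delta_0$-close to $g$, so Lemma \ref{JacLemma}(2) applied to $g$ bounds the Jacobian-plus-Birkhoff contribution by $(P(g,\phi) + 2\delta_0)N_0$; on each near block, item 3 of Lemma \ref{PerturbationLemma}, transferred from $f_0$ to $f$ at a cost $\delta_0$, bounds the contribution by $(P(f_0,\phi) + 2\delta_0)N_0$. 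Summing, dividing by $n$, passing to the limit, and absorbing the $O(1/n)$ remainder from partial blocks yields
\[ \sigma_k(f,\phi,\mu) \leq \tfrac{\alpha}{2}P(g,\phi) + \bigl(1-\tfrac{\alpha}{2}\bigr)P(f_0,\phi) + O(\delta_0) \leq P(f,\phi) - \tfrac{\alpha\Gamma}{2} + O(\delta_0), \]
which for $\delta_0$ small enough contradicts $\sigma_k(f,\phi,\mu) > P(f,\phi) - \delta = P(f,\phi) - \alpha\Gamma/8$.

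The main obstacle I expect is the combinatorial step linking the measure-theoretic hypothesis $\mu(M\setminus W) \geq \alpha$ to the dynamical statement that a definite fraction of length-$N_0$ blocks lies entirely outside $V$. This needs coherent choices of $\epsilon, \rho, N_0$ so that $W \subset V$, that a far block truly sees only $g$-dynamics, and that visits to $M \setminus W$ translate, up to a factor controlled by $T/N_0$, into a matching fraction of far blocks. A secondary subtlety is that item 3 of Lemma \ref{PerturbationLemma} is valid only at the full length $N_0$, so short remainder pieces must be absorbed as $O(1/n)$, which forces $\delta$ to be chosen after $N_0$, hence after $g$ and $O$; this order of quantifiers is consistent with the statement.
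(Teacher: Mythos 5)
There is a genuine gap at the step you yourself flagged as the main obstacle, and as written it does not survive scrutiny. You classify a length-$N_0$ block as \emph{far} only if \emph{every} iterate in it avoids $V$, and you claim that $\mu(M\setminus W)\geq\alpha$ together with Birkhoff for $\mathbf{1}_W$ forces a fraction $\geq\alpha/2$ of far blocks. This implication is false: an orbit that enters $V$ briefly but with gaps shorter than $N_0$ (say one iterate in $V$ out of every $N_0/2$) has \emph{no} far blocks at all, while its time fraction in $V\supset W$ is only $O(1/N_0)$, so $\mu(M\setminus W)$ can be as close to $1$ as you like. A single $V$-visit "poisons" an entire block, so the fraction of near blocks is controlled by the \emph{number} of visits, not by the \emph{time} spent in $V$, and no lower bound on far blocks follows from $\mu(M\setminus W)\geq\alpha$. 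Since near blocks only carry the large bound $P(f_0,\phi)+O(\delta_0)$, in this scenario your block sum gives $\sigma_k(f,\phi,\mu)\leq P(f_0,\phi)+O(\delta_0)$, which is perfectly compatible with $\sigma_k(f,\phi,\mu)>P(f,\phi)-\delta$; the contradiction never materializes, so the proof collapses exactly where the pressure gap was supposed to be used. (Secondary, repairable issues: with $\epsilon$ fixed you cannot have $W\subset V$ for small $\rho$ — you must first shrink $\epsilon$, which is legitimate by monotonicity of the conclusion in $\epsilon$ — and on far blocks Lemma \ref{JacLemma}(2) requires global $C^1$-closeness to $g$, which $f$ does not have; one must instead compare $f$ to $g$ along segments avoiding $V$.)

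The paper's proof is built precisely to avoid this combinatorial trap. The dichotomy is evaluated only at the sampled times $lN$ of the orbit of $f^N$: either $f^{lN}(z)$ lies in $W=\bigcup_{x\in O}B_f(x,\epsilon,\#O)$, in which case item 3 of Lemma \ref{PerturbationLemma} (a bound valid at \emph{every} point) gives the large rate $P(f_0,\phi)+2\delta$, or $f^{lN}(z)\notin W$, in which case the inclusions (\ref{inclusion}), (\ref{hau}) — which place $g$-Bowen balls and the pullback of the $\rho$-neighborhood of $O$ \emph{inside} $W$, the opposite containment from yours — guarantee that the next $N$ iterates see only $g$-like dynamics, yielding the small rate $P(g,\phi)+2\delta$. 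Because the small bound is triggered by the position at the block start alone, the relevant frequency is exactly the Birkhoff frequency of $M\setminus W$ along the $f^N$-orbit, which ties directly to $\mu(M\setminus W)$; the argument is then direct rather than by contradiction, giving $\mu(M\setminus W)\leq 4\delta/(P(f_0,\phi)-P(g,\phi))\leq\alpha$ with $\delta$ chosen proportional to $\alpha\eta$. To repair your proof you would have to replace the far/near block dichotomy by this start-of-block criterion and build the inclusions that make "start outside $W$" imply a $g$-bound over the whole block.
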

\begin{remark}
	\textup{This is a generalization of ~\cite[Proposition 6.1]{2016arXiv160601765B}. Let $z$ be a point in the support of an ergodic measure that is Lyapunov regular and satisfies Birkhoff's Ergodic Theorem applied to the indicator function of $M\setminus \bigcup_{x\in O} B_f(x,\epsilon,\#O)$. The idea of the proof is to use the fact that the growth rate of the logarithm of Jacobian is larger when the iteration of the point $z$ is inside a small neighborhood of the periodic orbit because we can perturb the system near the periodic orbit by Lemma \ref{PerturbationLemma}. This will force the iteration of $z$ to stay close to the periodic orbit very often since $z$ is a point in the support of an ergodic measure with large pressure. In the proof, we will need Lemma \ref{PerturbationLemma} and the fact that $N_0$ only depends on $N$, $\phi$, $\delta$, $g$ and the Riemannian metric.}
\end{remark}

\begin{proof}

	We fix $\phi,\epsilon,\alpha$ and let $g\in \mathcal{R}$. Pick an arbitrarily small number $0<\eta\leq \eta_0(g)$ with $\eta_0(g)$ as give in the Lemma \ref{PerturbationLemma} and set 
	\begin{eqnarray}\label{delta}
	\delta=\frac{\alpha \eta}{100 C_0^2(\Vert Dg\Vert+ \Vert Dg^{-1}\Vert) }.
	\end{eqnarray}
	
	By Lemma \ref{JacLemma} and $g\in \mathcal{R}$, there is a $N_1$ such that for any $0\leq k\leq d_0$, any $E\in  \tmop{Grass}_k(TM)$, and any $n\geq N_1$, we have 
	
	\begin{equation}\label{ineq}
	\log \vert \tmop{Jac} (g^n,E)\vert +S_n \phi(\pi_k(E)) \leq (\sigma_k(g,\phi)+\delta)n\leq (P(g,\phi)+\delta)n.
	\end{equation}
	
	There exist a periodic orbit $O$ of $g\in \mathcal{R}$ such that 
	\begin{eqnarray}\label{periodic}
	\Delta_{\phi}(g,O)\geq P(g,\phi)-\delta .
	\end{eqnarray}
	
	Lemma \ref{PerturbationLemma} gives $N_0=N_0(g,O,\delta)$. Let $N=\max (N_0,N_1,\# O)$. For any $C^1$ diffeomorphism $h$ that is $2\rho$-close to $g$ in the $C^0$-distance, by the continuity of $h$ and the fact that $N_0$ does not depend on $\rho$, we can choose $\rho>0$ such that 
	\begin{equation}\label{inclusion}
	h^{-N}\left( \bigcup_{x\in O} B(x,\rho)\right) \subset \bigcup_{x\in O}B_g(x,\epsilon/2,\# O),
	\end{equation}
	and 
    \begin{equation}\label{hau}
    \bigcup_{x\in O}B_g(x,\epsilon/2,\# O) \subset \bigcup_{x\in O}B_h(x,\epsilon,\# O).
    \end{equation}
    Here $B(x,\rho)$ is the open ball centered at $x$ with radius $\rho$. 
    
    Item 3 of Lemma \ref{PerturbationLemma} gives a diffeomorphism $f_0$ that is $\eta$-close to $g$ in $\mathcal{E}_{\omega}(M)$ and $2\rho$-close to $g$ in the $C^0$-distance. For any $f\in \mathcal{E}_{\omega}(M)$ close to $f_0$ and any ergodic measure $\mu$ for $f$ such that 
    \begin{equation*}
    	h_{\mu}(f)+\int \phi \ d\mu>P(f,\phi)-\delta .
    \end{equation*}
     Notice that $\mu$ is also ergodic with respect to $f^N$. So we can estimate the time spent outside $B_f(x,\epsilon,\#O)$ by the forward orbit of $f^N$.
	
	As $f$ is close to $f_0$ and $f_0\in \mathcal{R}$ is a continuity point of $P(f,\phi)$,
	\begin{eqnarray*}
		h_{\mu}(f)+\int \phi \ d\mu&>&P(f,\phi)-\delta \\
		                                                       &>& P(f_0,\phi)-2\delta .
	\end{eqnarray*}

	By Ruelle's Inequality, there exists $0\leq k \leq d_0$ such that 
	\begin{equation*}
		\sigma_k(f,\phi,\mu)\geq h_{\mu}(f)+\int \phi \ d\mu .
	\end{equation*}
	 By Oseledets Theorem, for $\mu$-almost every $z$, there is $E\subset \tmop{Grass}_k(T_zM)$ with 
	\[ \lim_{n\rightarrow \infty} \left(\frac{1}{n}\log \vert \tmop{Jac} (f^n,E)\vert + \frac{1}{n}S_n \phi(\pi_k(E))\right) =  \sigma_k(f,\phi,\mu).\]
	
	And 
	\begin{eqnarray*}
		\sigma_k(f,\phi,\mu)&\geq& h_{\mu}(f)+\int \phi \ d\mu \\
		                                         &>& P(f_0,\phi)-2\delta
	\end{eqnarray*}	
	When $f^n(z)$ is in $\bigcup_{x\in O}B_f(x,\epsilon,\# O)$, item 3 of a Lemma \ref{PerturbationLemma} gives
	\[ \frac{1}{N}\log \vert \tmop{Jac} (f^N,Df^n(E))\vert +\frac{1}{N}S_N \phi(\pi_k(Df^n(E)))\leq P(f_0,\phi)+2\delta. \]
	When $f^n(z)$ is not in $\bigcup_{x\in O}B_f(x,\epsilon,\# O)$, the relations (\ref{inclusion}) (\ref{hau}) and item 1 of a Lemma \ref{PerturbationLemma} show that $f^N(f^n(z))$ and $g^N(f^n(z))$ are arbitrarily close. Hence, by inequality (\ref{ineq}), 
	\[ \frac{1}{N}\log \vert \tmop{Jac} (f^N,Df^n(E))\vert +\frac{1}{N}S_N \phi(\pi_k(Df^n(E)))\leq P(g,\phi)+2\delta. \]
	For each $m\geq 1$, we define 
	\[ p_m=\frac{1}{m}\#\lbrace 0\leq l<m : f^{lN}(z)\notin \bigcup_{x\in O} B_f(x,\epsilon,\#O)\rbrace.  \]
	By chain rule and the definition of Birkhoff sum,
	\[ \frac{1}{mN}\log \vert \tmop{Jac}(f^{mN},E)\vert+\frac{1}{mN}S_{mN} \phi(\pi_k(E))\leq (1-p_m)(P(f_0,\phi)+2\delta)+p_m(P(g,\phi)+2\delta). \]
	Let $m$ go to infinity, using Birkhoff Ergodic Theorem(for almost every $z$), (\ref{delta}), (\ref{periodic}) and item 2 of Lemma \ref{PerturbationLemma},
	
	\begin{eqnarray*}
		\mu \left(M\setminus \bigcup_{x\in O} B_f(x,\epsilon,\#O)\right)&\leq& \frac{4\delta}{P(f_0,\phi)-P(g,\phi)} \\
		&\leq& 100\delta C_0^2(\Vert Dg\Vert+ \Vert Dg^{-1}\Vert)/\eta \\
		&\leq& \alpha.
	\end{eqnarray*}
\end{proof}

\begin{proposition}\label{concentration}
	There is a dense $G_{\delta}$ set $\mathcal{G}\subset \mathcal{E}_{\omega}(M)$ such that for any $\epsilon,\alpha \in (0,1)$, any $f\in \mathcal{G}$ and $\phi\in C^0(M,\mathbb{R})$, there exist $\delta>0$ and a periodic orbit $O\subset M$ such that
	\[ h_{\mu}(f)+\int \phi \ d\mu>P(f,\phi)-\delta \ \textup{implies} \  \mu \left(M\setminus \bigcup_{x\in O} B_f(x,\epsilon,\#O)\right)<\alpha. \]
\end{proposition}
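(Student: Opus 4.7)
The plan is to upgrade the density statement of Proposition \ref{non-generic-concentration} to a $G_\delta$ statement using the standard Baire category trick, then extend from a countable dense family of potentials to arbitrary $\phi \in C^0(M, \mathbb{R})$ by $C^0$-approximation. First, I fix a countable dense sequence $\{\varphi_n\}_{n \geq 1}$ in $C^0(M, \mathbb{R})$, which exists because $M$ is compact and hence $C^0(M, \mathbb{R})$ is separable. For each triple $(n, p, q)$ with $n \in \mathbb{N}$ and $p, q \in \mathbb{Q} \cap (0, 1)$, I define $U_{n, p, q}$ to be the set of $f \in \mathcal{E}_\omega(M)$ for which there exist $\delta > 0$, a periodic orbit $O \subset M$, and an open neighborhood $\mathcal{W}$ of $f$ in $\mathcal{E}_\omega(M)$ such that for all $g \in \mathcal{W}$ and all $\mu \in \mathbb{P}_{\tmop{erg}}(g)$,
\[ h_\mu(g) + \int \varphi_n \, d\mu > P(g, \varphi_n) - \delta \ \Longrightarrow \ \mu\Bigl(M \setminus \bigcup_{x \in O} B_g(x, p, \#O)\Bigr) < q. \]
By construction each $U_{n, p, q}$ is open, and Proposition \ref{non-generic-concentration} applied with $\phi = \varphi_n$, $\epsilon = p$, $\alpha = q$ shows it is dense: the robust dependence on $f$ close to $f_0$ in that proposition is precisely what is required to land inside $U_{n, p, q}$. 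Hence $\mathcal{G} := \bigcap_{n, p, q} U_{n, p, q}$ is a dense $G_\delta$ subset of $\mathcal{E}_\omega(M)$.

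Next, I verify the desired property for $f \in \mathcal{G}$ and arbitrary continuous $\phi$. Fix $\phi \in C^0(M, \mathbb{R})$ and $\epsilon, \alpha \in (0, 1)$. Choose rationals $p \in (0, \epsilon)$ and $q \in (0, \alpha)$, and pick $n$ with $\eta := \|\phi - \varphi_n\|_\infty$ small; the membership $f \in U_{n, p, q}$ yields some $\delta' > 0$ and a periodic orbit $O$ adapted to $(\varphi_n, p, q)$. I then set $\delta := \delta' - 2\eta$, which is positive as soon as $\eta < \delta'/2$, and claim $(\delta, O)$ works for $(\phi, \epsilon, \alpha)$ at $f$.

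To pass from the hypothesis on $\phi$ back to the hypothesis on $\varphi_n$, I use the two elementary $C^0$-Lipschitz estimates
\[ |P(f, \phi) - P(f, \varphi_n)| \leq \eta, \qquad \left|\int \phi \, d\mu - \int \varphi_n \, d\mu\right| \leq \eta, \]
which hold for every $\mu \in \mathbb{P}_{\tmop{erg}}(f)$. If such a $\mu$ satisfies $h_\mu(f) + \int \phi \, d\mu > P(f, \phi) - \delta$, these estimates give $h_\mu(f) + \int \varphi_n \, d\mu > P(f, \varphi_n) - \delta - 2\eta = P(f, \varphi_n) - \delta'$, so the defining implication of $U_{n, p, q}$ applies and yields $\mu(M \setminus \bigcup_{x \in O} B_f(x, p, \#O)) < q$. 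Since $p < \epsilon$ implies $B_f(x, p, \#O) \subset B_f(x, \epsilon, \#O)$ and $q < \alpha$, the required inequality follows.

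The only genuine obstacle is that $C^0(M, \mathbb{R})$ is uncountable, so one cannot directly take a $G_\delta$-intersection over all $\phi$ simultaneously; the resolution is the $C^0$-approximation step above, which works cleanly because both the topological pressure and the map $\mu \mapsto \int \phi \, d\mu$ are 1-Lipschitz in $\phi$ for the $\|\cdot\|_\infty$-norm, absorbing the discrepancy between $\phi$ and $\varphi_n$ into $2\eta$. The rest is a formal combination of Baire category with the robustness already encoded in Proposition \ref{non-generic-concentration}.
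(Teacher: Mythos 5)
Your argument is essentially the paper's own proof: a Baire intersection of open (dense) sets indexed by a countable dense family $\{\varphi_n\}$ of potentials and rational parameters, with density supplied by Proposition \ref{non-generic-concentration}, followed by a $C^0$-approximation step that absorbs $\|\phi-\varphi_n\|_\infty$ into the pressure and the integral via their $1$-Lipschitz dependence on the potential. The one caveat — which you at least make explicit, whereas the paper's ``for $k$ large enough'' step glosses over it and even lets the auxiliary index (hence the orbit $O$) depend on $\mu$ — is the condition $\eta<\delta'/2$: the witness $\delta'$ is only produced after $n$ is chosen and may a priori shrink as $\varphi_n\to\phi$, so the existence of an $n$ with $2\|\phi-\varphi_n\|_\infty<\delta'_n$ does not follow from the statement of Proposition \ref{non-generic-concentration} alone; this quantitative point is left unaddressed in both your proposal and the paper's proof.
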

\begin{proof}
	Let $\{\varphi_1,\ \varphi_2, \dots, \ \varphi_n, \dots\}$ be dense in $C^0(M,\mathbb{R})$. Fix rational $\epsilon, \alpha$, let $f_0\in \mathcal{V}(\epsilon, \alpha,n)$ be the dense set in Proposition \ref{non-generic-concentration}, $V_{f_0,\epsilon,\alpha,n}$ be the open set of $f_0$ satisfying the conclusion of Proposition \ref{non-generic-concentration} with $\varphi_n$ the potential function. Then the union $\bigcup_{f_0\in \mathcal{V}(\epsilon, \alpha,n)} V_{f_0,\epsilon,\alpha,n}$ is open and dense in $\mathcal{E}_{\omega}(M)$. The intersection 
	\begin{equation*}
		\mathcal{G}:=\bigcap_{n=1}^{\infty}\bigcap_{\epsilon,\alpha\in \mathbb{Q}\cap (0,1)} \left(\bigcup_{f_0\in \mathcal{V}(\epsilon, \alpha,n)} V_{f_0,\epsilon,\alpha,n}\right)
	\end{equation*}
	 is a dense $G_{\delta}$-subset of $\mathcal{E}_{\omega}(M)$ satisfying the conclusion for $\varphi_n$ for every $n$. For any $f\in \mathcal{G}$ and $\phi\in C^0(M,\mathbb{R})$, if 
	\[ h_{\mu}(f)+\int \phi \ d\mu>P(f,\phi)-\delta, \]
	then because $\{\varphi_1,\ \varphi_2, \dots, \ \varphi_n, \dots\}$ is dense in $C^0(M,\mathbb{R})$, there is a sequence $\{ \varphi_{n_k} \}$ converges to $\phi $ in $C^0$-topology. Therefore by the continuity of $\phi \mapsto P(f,\phi)$ and $\phi \mapsto \int \phi \ d\mu$
	
	\begin{eqnarray*}
		\lim_{k \rightarrow \infty}(P(f,\varphi_{n_k})-\int \varphi_{n_k} \ d\mu) &=&P(f,\phi)-\int \phi \ d\mu \\
		&<&h_{\mu}(f)+\delta.
	\end{eqnarray*}

	Hence, for $k$ large enough, $P(f,\varphi_{n_k})-\int \varphi_{n_k} \ d\mu < h_{\mu}(f)+\delta$. This implies 
	\[\mu \left(M\setminus \bigcup_{x\in O} B_f(x,\epsilon,\#O)\right)<\alpha \]
	and the Proposition is proved.
\end{proof}
 
\begin{proof}[Proof of Theorem\ref{No-ES-positive-entropy}]
 This proof is essentially the same as the proof of ~\cite[Theorem 2]{2016arXiv160601765B}, we give a sketch of the argument. Let $f$ be in the dense $G_{\delta}$ set $\mathcal{G}\subset \mathcal{E}_{\omega}(M)$ as in Proposition \ref{concentration}. Assume that there is a equilibrium state $\mu \in \mathbb{P}_{\tmop{erg}}(f)$ for $(f,\phi)$ with positive entropy. By Katok's entropy formula \cite{PMIHES_1980__51__137_0}, we have 
	\[ h_{\mu}(f)=\lim_{\epsilon \rightarrow 0}\limsup_{n\rightarrow \infty}\frac{1}{n}\log r_f(\mu,\epsilon,n) \]
	where $r_f(\mu,\epsilon,n)$ is the minimal number of Bowen balls $B_f(x,\epsilon,n)$ needed to cover a set of $\mu$-measure greater than $1/2$ ($1/2$ can be replaced by any number between $0$ and $1$).
	
	Let us fix $\epsilon>0$ and some $\epsilon$-dense finite set $\mathcal{A} \subset M$. Let $0<\alpha\ll 1/\log (\# A)$. Proposition \ref{concentration} gives a number $\delta>0$ and a periodic orbit $O\subset M$ (where $\mathcal{A}$ is disjoint from $O$). Write $N=\#O$ for convenience. The fact that $h_{\mu}(f)>0$ implies that $N$ goes to infinity as $\epsilon$ goes to zero. If this is not true, $\mu$ will be an atomic measure by Proposition \ref{concentration}. This gives us zero measure entropy which contradicts our assumption.

Using the same estimates as in the proof of ~\cite[Theorem 2]{2016arXiv160601765B}, we have that 
\[ \frac{1}{n}\log r_f(\mu,2\epsilon,n)\leq \frac{\log n}{n}+H(1/N)+\alpha \log(\# \mathcal{A}) +\frac{1}{N}\log N \]
where $H(t)=-t\log t- (1-t)\log (1-t)$. $\mu$ is an equilibrium state implies that for any $\delta>0$, 

\[h_{\mu}(f)+\int \phi \ d\mu>P(f,\phi)-\delta \]

We can take $\epsilon$ and $\alpha \log (\# \mathcal{A})$ arbitrarily small. Hence,
\[h_{\mu}(f)=\lim_{\epsilon \rightarrow 0}\limsup_{n\rightarrow \infty}\frac{1}{n}\log r_f(\mu,\epsilon,n)=0\] 
which is a contradiction.
\end{proof}

\section{Equilibrium states on compact surfaces with zero Lyapunov exponents} \label{zeroExponent}
\ 

We will restrict to the case of compact surface in this entire section. For an area-preserving diffeomorphism, if it has a dominated splitting on a compact invariant set, then the splitting is hyperbolic. So $\mathcal{E}_{\omega} (M)$ is the interior of the set of all non-Anosov diffeomorphisms on $M$.

Since $M$ is a compact surface, we have $\Delta (f,p)=\lambda^+(f,p)$.
By Theorem \ref{PressureFormula}, we know that
\[ P (f, \phi) = \sup_{p \in \tmop{Per} (f)} \{\lambda^+(f, p) + \frac{1}{T(p)}  \sum_{i = 0}^{T (p) - 1} \phi (f^i (p))\} \]
for generic diffeomorphism $f\in \mathcal{E}_{\omega} (M)$ and any continuous function $\phi$ on $M$. Using $\tmop{Ma \tilde{n}  \acute{e}}$'s ergodic closing lemma(Corollary \ref{c.ergodic-closing}), we also have the following
\[ P (f, \phi)= \sup_{\mu \in \mathbb{P}_{\tmop{erg}}(f)}\lbrace \lambda^+(f, \mu ) +\int \phi \ d\mu \rbrace. \]

By Theorem \ref{No-ES-positive-entropy}, if there is an equilibrium state, say $\nu$, then we must have 
\begin{equation*}
	\int \phi \ d\nu = P(f,\phi).
\end{equation*}
This implies that $\lambda^+(f, \nu )=0$. Conversely, if $\lambda^+(f, \nu )=0$, 
\begin{equation*}
	\int \phi \ d\nu= \sup_{\mu \in \mathbb{P}_{\tmop{erg}}(f)} \int \phi \ d\mu
\end{equation*}
and 
\begin{equation*}
	\lambda^+(f, \mu )+\int \phi \ d\mu \leq \int \phi \ d\nu
\end{equation*}
 for any ergodic measure $\mu$, then we have $\nu$ is an equilibrium state for $(f,\phi)$.

As mentioned in the introduction, for $\phi_m(x):=-\frac{1}{m}\log \Vert D_xf^m\Vert, m\in \mathbb{N}$, we have the following result. 
\begin{proposition}\label{GeometricPotential}
	There exists a residual subset $\mathcal{G}$ of $\mathcal{E}_{\omega} (M)$, such that for any $f\in \mathcal{G}$, there exists $m\in \mathbb{N}$ such that there exist an equilibrium state for $(f,\phi_m)$.
\end{proposition}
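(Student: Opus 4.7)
The plan exploits the characterization of equilibrium states derived just above. For $f$ in the residual subset of $\mathcal{E}_{\omega}(M)$ on which Theorems~\ref{PressureFormula} and~\ref{No-ES-positive-entropy} both apply, an ergodic $\nu$ is an equilibrium state for $(f,\phi_m)$ if and only if $\lambda^+(f,\nu)=0$ and $\nu$ maximizes
\[ F_m(\mu):=\lambda^+(f,\mu)+\int \phi_m\,d\mu \]
on $\mathbb{P}(f)$; indeed, Ruelle's inequality then forces $h_\nu(f)\le\lambda^+(f,\nu)=0$, so $h_\nu(f)+\int\phi_m\,d\nu=F_m(\nu)=P(f,\phi_m)$. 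Subadditivity of $n\mapsto \int \log\|Df^n\|\,d\mu$ moreover yields $F_m\le 0$ and hence $P(f,\phi_m)\le 0$.

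I would first argue that for every $m$ the functional $F_m$ attains its supremum at an ergodic measure $\nu_m$: $\mu\mapsto\lambda^+(f,\mu)=\inf_{n\ge 1}\frac{1}{n}\int\log\|Df^n\|\,d\mu$ is upper semi-continuous as an infimum of continuous affine functionals, and $\mu\mapsto\int\phi_m\,d\mu$ is continuous, so $F_m$ is u.s.c.\ and affine on the weak-$*$ compact set $\mathbb{P}(f)$ and attains its maximum at some ergodic $\nu_m$ by ergodic decomposition. Next, I would invoke Bochi's theorem, cited in the discussion preceding the proposition: for generic $f\in\mathcal{E}_{\omega}(M)$ on a surface (automatically non-Anosov), the area $\omega$ satisfies $\int\lambda^+(f,\cdot)\,d\omega=0$, so some ergodic component $\mu_0$ of $\omega$ has $\lambda^+(f,\mu_0)=0$. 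Kingman's subadditive ergodic theorem applied to $\mu_0$ yields $\int\phi_m\,d\mu_0=-\frac{1}{m}\int \log\|Df^m\|\,d\mu_0\to 0$ as $m\to\infty$, and since $P(f,\phi_m)\ge F_m(\mu_0)$, we get $P(f,\phi_m)\nearrow 0$ from below.

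The crux, and the main obstacle, is to produce a finite $m$ for which $\lambda^+(f,\nu_m)=0$; once secured, $\nu_m$ is the desired equilibrium state by the characterization above. I would attempt a contradiction argument: assume $\lambda^+(f,\nu_m)>0$ for every $m$, and write the subadditive defect $\delta_m:=\frac{1}{m}\int\log\|Df^m\|\,d\nu_m-\lambda^+(f,\nu_m)\ge 0$, so that $F_m(\nu_m)=-\delta_m$ and $\delta_m\to 0$. Extracting a weak-$*$ subsequential limit $\nu_\infty$ of $(\nu_{m_k})$ and exploiting the upper semi-continuity of $\lambda^+$, I would then use the ergodic closing lemma (Corollary~\ref{c.ergodic-closing}) to replace $\nu_{m_k}$ by periodic orbit approximants with nearby Lyapunov exponents and orbit averages of $\log\|Df^{m_k}\|$, and exploit the density of elliptic periodic orbits in $M$ for generic $f\in\mathcal{E}_{\omega}(M)$ on surfaces (Newhouse, mentioned earlier in the introduction) to obtain zero-exponent periodic competitors whose $F_m$-values approach $0$ as $m$ grows. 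The periodic-orbit formula of Theorem~\ref{PressureFormula} then allows a direct comparison between $F_m(\nu_m)=-\delta_m$ and the $F_m$-value of such a competitor; making this comparison quantitatively sharp enough to contradict the maximality of $\nu_m$ at a specific \emph{finite} $m$, rather than only in the limit, is the decisive technical difficulty of the proof, and the flexibility in choosing $m$ (depending on $f$) afforded by the genericity hypothesis is what should make such a selection possible.
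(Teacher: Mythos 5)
Your reduction is sound as far as it goes: the characterization of equilibrium states via $\lambda^+(f,\nu)=0$ together with maximality of $F_m(\mu)=\lambda^+(f,\mu)+\int\phi_m\,d\mu$, and the subadditivity bound $F_m\le 0$ (hence $P(f,\phi_m)\le 0$), are exactly the right ingredients. But the proof is not complete: the step you yourself flag as the crux --- producing a \emph{finite} $m$ for which the maximizer $\nu_m$ has zero upper exponent --- is never carried out. The contradiction scheme you sketch (weak-$*$ limits of $\nu_{m_k}$, ergodic closing, elliptic competitors, and a ``quantitatively sharp'' comparison via Theorem \ref{PressureFormula}) only yields information in the limit $m\to\infty$; Bochi's theorem and Kingman give $F_m(\mu_0)\to 0$ but say nothing at any fixed $m$, and nothing in your argument rules out $\lambda^+(f,\nu_m)>0$ with $\delta_m>0$ for every $m$. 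So as written this is a genuine gap, not a technicality.

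The paper closes this gap in a much more direct way, using an ingredient you mention but do not exploit correctly: by Newhouse, a generic $f\in\mathcal{E}_\omega(M)$ on a surface has an elliptic periodic point $p$, and one simply takes $m=T(p)$, the period of that point. The atomic measure $\mu_p$ on $\mathcal{O}(p)$ has $\lambda^+(f,\mu_p)=0$ and $\lambda^+(f,\mu_p)+\int\phi_m\,d\mu_p=0$, while your own subadditivity estimate gives $\lambda^+(f,\mu)+\int\phi_m\,d\mu\le 0$ for every ergodic $\mu$; combined with the formula $P(f,\phi)=\sup_{\mu\in\mathbb{P}_{\tmop{erg}}(f)}\{\lambda^+(f,\mu)+\int\phi\,d\mu\}$ this shows $P(f,\phi_m)=0=h_{\mu_p}(f)+\int\phi_m\,d\mu_p$, i.e.\ $\mu_p$ is an equilibrium state. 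In other words, the freedom to choose $m$ depending on $f$ is used not to run a limiting or variational-maximizer argument, but to match $m$ to the period of an elliptic orbit, which hands you the zero-exponent maximizer explicitly. If you want to salvage your approach, replace the entire ``crux'' paragraph by this observation; the rest of your first paragraph (the characterization and $F_m\le 0$) then already finishes the proof.
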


\begin{proof}
	By \cite{10.2307/2374000}, we can find an elliptic periodic point $p$ for $f$, let $m$ denote the period of $p$. 
	For any ergodic measure $\mu$, by Oseledets' Theorem \cite{Osedelec} and Birkhoff's Ergodic Theorem, we can find a set of full measure such that for any point $x$ in it we have
	\begin{eqnarray}
	\lambda^+(f, \mu )=\lambda_2&=&\lim_{n \rightarrow \infty}\frac{1}{n}\log \Vert D_xf^n \Vert, \\
	\lim_{k \rightarrow \infty} \frac{1}{k} \sum_{i=0}^{k-1} \phi_{m}(f^{im}(x))&=&\int \phi_{m} \ d \mu .
	\end{eqnarray}
	Let $n=km$ be a multiple of $m$, then 
	\begin{eqnarray}
	\frac{1}{n}\log \Vert D_xf^n \Vert &\leq&\frac{1}{km}\sum_{i=0}^{k-1}\log \Vert D_{f^{im}(x)}f^m \Vert \\
	&=& -\frac{1}{k}\sum_{i=0}^{k-1} \phi_{m}(f^{im}(x)) .
	\end{eqnarray}
	Letting $n$ go to infinity on both sides we have,
	\[\lambda^+(f, \mu )\leq -\int \phi_{m} \ d\mu. \]
	For the elliptic periodic point $p$, let $\mu_p$ be the corresponding atomic measure. It is clear that we have 
	\begin{equation*}
		\phi_{m}(p)=-\frac{1}{m}\log \Vert D_pf^m \Vert=0
	\end{equation*}
	 and $\lambda^+(f,\mu_p)=0$. This gives 
	 \begin{equation*}
	 	\lambda^+(f,\mu_p)+\int \phi_m \ d\mu_p=0.
	 \end{equation*}
	  Therefore for any ergodic measure $\mu$,
	\[ \lambda^+(f,\mu)+\int \phi_m \ d\mu\leq\lambda^+(f,\mu_p)+\int \phi_m \ d\mu_p=0. \]
	Hence $\mu_p$ is an equilibrium state for $(f,\phi_m)$.
\end{proof}

We can also consider the family of potential function $t\phi_{m}$ for $t\geq 0$. It is clear from Proposition \ref{GeometricPotential} that for $t\geq 1$, there exists at least one equilibrium state for $(f,t\phi_{m})$. And for $t=0$, we have that $P(f,0)=h_{\tmop{top}}(f)$, and there is no equilibrium states for $C^1$-generic conservative diffeomorphism without a dominated splitting by ~\cite[Theorem 2]{2016arXiv160601765B}. Since $P(f,t\phi_{m})$ is continuous with respect to $t$ \cite{walters2000introduction} and is monotone decreasing as $t$ increases, there is a phase transition point $t_0$. More specifically, we have the following. 

\begin{proposition}\label{transition}
	Let $f\in \mathcal{G}$ and $m\in \mathbb{N}$ as in Proposition \ref{GeometricPotential}, $\phi_{m}:=-\frac{1}{m}\log \Vert D_xf^m\Vert$.
	Let $t_0:=\min \lbrace t\ |\  P(f,t\phi_{m})=0 \rbrace$. Then 
	\[ t_0=\sup_{p \in \tmop{Per} (f)\cap S(f)}\frac{\lambda^+(f,p)}{-\frac{1}{T(p)}  \sum_{i = 0}^{T(p) - 1} \phi_m (f^i (p))}, \]
	where $T(p)$ is the period of $p$ and $S(f)$ is the set of all the hyperbolic periodic point of $f$. If $0\leq t<t_0$, then there are no equilibrium states for $(f,t\phi_{m})$ and if $t \geq t_0$, there is an equilibrium state with zero entropy. 
\end{proposition}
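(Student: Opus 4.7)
The plan is to combine the periodic-orbit representation of pressure (Theorem \ref{PressureFormula}) with the non-existence of positive-entropy equilibrium states (Theorem \ref{No-ES-positive-entropy}) and the key inequality $\lambda^+(f,\mu)+\int\phi_m\, d\mu\le 0$ for every ergodic $\mu$ already established in the proof of Proposition \ref{GeometricPotential}. Since $M$ is a surface, $\Delta(f,p)=\lambda^+(f,p)$, so Theorem \ref{PressureFormula} applied to $t\phi_m$ gives
\[ P(f,t\phi_m)=\sup_{p\in\tmop{Per}(f)}\bigl\{\lambda^+(f,p)+t\,\overline{\phi_m}(p)\bigr\}, \]
where $\overline{\phi_m}(p):=\frac{1}{T(p)}\sum_{i=0}^{T(p)-1}\phi_m(f^i(p))$. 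Conservativity forces $\|D_xf^m\|\ge 1$, hence $\phi_m\le 0$ pointwise, so $\overline{\phi_m}(p)\le 0$ for every $p$; the key inequality sharpens this to $\overline{\phi_m}(p)\le -\lambda^+(f,p)$. Thus for each hyperbolic $p\in S(f)$ the affine map $t\mapsto\lambda^+(f,p)+t\,\overline{\phi_m}(p)$ is nonincreasing and vanishes exactly at $t(p):=\lambda^+(f,p)/(-\overline{\phi_m}(p))\in(0,1]$, while the elliptic periodic orbit $p_0$ produced by Proposition \ref{GeometricPotential} contributes $0$ for every $t$, so $P(f,t\phi_m)\ge 0$ for all $t\ge 0$.

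Next I identify $t_0$ with $t^*:=\sup_{p\in\tmop{Per}(f)\cap S(f)} t(p)$. If $t<t^*$, some hyperbolic $p$ satisfies $t<t(p)$, so the corresponding contribution is strictly positive and $P(f,t\phi_m)>0$. Conversely, for $t\ge t^*$ every hyperbolic periodic orbit contributes at most $0$ and every elliptic orbit contributes $t\,\overline{\phi_m}(p)\le 0$; the periodic-orbit supremum is therefore nonpositive, and combined with $P(f,t\phi_m)\ge 0$ this yields $P(f,t\phi_m)=0$. Hence $t_0=t^*$, which is the asserted formula.

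Finally I handle the equilibrium-state dichotomy. For $t\ge t_0$ the atomic measure $\mu_{p_0}$ is already an equilibrium state for $(f,\phi_m)$ by Proposition \ref{GeometricPotential}; since $h_{\mu_{p_0}}(f)=0$ and $\int\phi_m\, d\mu_{p_0}=0$, one has $h_{\mu_{p_0}}(f)+t\int\phi_m\, d\mu_{p_0}=0=P(f,t\phi_m)$, so $\mu_{p_0}$ remains a zero-entropy equilibrium state for every $t\ge t_0$. For $0\le t<t_0$, suppose $\nu$ is an equilibrium state; by ergodic decomposition I may take $\nu$ ergodic. Theorem \ref{No-ES-positive-entropy} forces $h_\nu(f)=0$, whence $P(f,t\phi_m)=t\int\phi_m\, d\nu\le 0$ (using $\phi_m\le 0$), contradicting $P(f,t\phi_m)>0$. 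The main subtlety I anticipate is the step $t_0=t^*$: ruling out that some non-periodic ergodic measure $\mu$ satisfies $\lambda^+(f,\mu)/(-\int\phi_m\, d\mu)>t^*$ requires the genericity built into Theorem \ref{PressureFormula} (through the ergodic closing lemma of Corollary \ref{c.ergodic-closing}), which is why the statement is confined to a $C^1$-generic subset of $\mathcal{E}_{\omega}(M)$.
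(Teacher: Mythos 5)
Your proposal is correct and follows essentially the same route as the paper: the periodic-orbit pressure formula of Theorem \ref{PressureFormula} combined with the inequality $\overline{\phi_m}(p)\le -\lambda^+(f,p)$, the elliptic orbit from Proposition \ref{GeometricPotential} giving $P(f,t\phi_m)\ge 0$ and $\sup_\mu\int\phi_m\,d\mu=0$, and Theorem \ref{No-ES-positive-entropy} to exclude equilibrium states when the pressure is positive. The only cosmetic difference is that you obtain $P(f,t\phi_m)=0$ for $t\ge t_0$ directly from the orbit formula (noting $t(p)\in(0,1]$), where the paper invokes continuity and monotonicity of $t\mapsto P(f,t\phi_m)$.
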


\begin{proof}
	Since $P(f,t\phi_{m})$ is continuous with respect to $t$, $\min \lbrace t\ |\  P(f,t\phi_{m})=0 \rbrace$ can be achieved. By Theorem \ref{PressureFormula},
	\[ P (f, t\phi_m) = \sup_{p \in \tmop{Per} (f)} \{\lambda^+(f, p) + \frac{t}{T(p)}  \sum_{i = 0}^{T(p) - 1} \phi_m (f^i (p))\}. \]
	Hence $P (f, t\phi_m)>0$ is equivalent to the following,
	\[ \sup_{p \in \tmop{Per} (f)} \{\lambda^+(f, p) + \frac{t}{T(p)}  \sum_{i = 0}^{T(p) - 1} \phi_m (f^i (p))\} >0 .\]
	This is equivalent to that there exist some periodic point $p$ such that 
	\[ \lambda^+(f, p) + \frac{t}{T(p)}  \sum_{i = 0}^{T(p) - 1} \phi_m (f^i (p)) >0. \]
	Clearly, $p$ is a hyperbolic periodic. Therefore we have the following,
	
	\begin{eqnarray*}
		t&<& \frac{\lambda^+(f,p)}{-\frac{1}{T(p)}  \sum_{i = 0}^{T(p) - 1} \phi_m (f^i (p))}\\
		  &\leq& \sup_{p \in \tmop{Per} (f)\cap S(f)}\frac{\lambda^+(f,p)}{-\frac{1}{T(p)}  \sum_{i = 0}^{T(p) - 1} \phi_m (f^i (p))}. 
	\end{eqnarray*}
	This proves
	\[ t_0= \sup_{p \in \tmop{Per} (f)\cap S(f)}\frac{\lambda^+(f,p)}{-\frac{1}{T(p)}  \sum_{i = 0}^{T(p) - 1} \phi_m (f^i (p))},\]
	By Theorem \ref{No-ES-positive-entropy}, $(f,t\phi_{m})$ has an equilibrium state if and only if the following holds,
	\[ t\sup_{\mu \in \mathbb{P}_{\tmop{erg}}(f)} \int \phi_m \ d\mu = P(f,t\phi_m). \]
	Since 
	\begin{equation*}
		\sup_{\mu \in \mathbb{P}_{\tmop{erg}}(f)} \int \phi_m \ d\mu =0,
	\end{equation*}
	 $(f,t\phi_m)$ has equilibrium states if and only if $P(f,t\phi_{m})=0$. By the definition of $t_0$, there is no equilibrium states if $0 \leq t<t_0$. Since $\phi_{m}$ is negative, $P(f,t\phi_{m})$ is non-increasing with respect to $t$. For the elliptic periodic point $p$ of period $m$, 
	 \begin{equation*}
	 	h_{\mu_p}+ \int t\phi_{m} \ d\mu_p=0.
	 \end{equation*}
	  By the Variational Principle of topological pressure and the fact that $P(f,t\phi_{m})$ is non-increasing with respect to $t$, we have for all $t\geq t_0$, $P(f,t\phi_{m})=0$. Hence $(f,t\phi_{m})$ will have equilibrium state with zero entropy if $t\geq t_0$. 
\end{proof}

For any elliptic periodic point $q$ with period $l$ which is a factor of $m$ (including fixed points) will also give a different equilibrium state $\mu_q$, so in general, the equilibrium states are not unique.

\renewcommand\refname{Reference}
\bibliographystyle{plain}
\bibliography{EquilibriumNoDS}

\medskip

\noindent
\emph{Xueming Hui}\\
{\small Department of Mathematics, Brigham Young University,
	Provo, UT 84602, USA}\\
{\tt hui@mathematics.byu.edu}
\end{document}